\theoremstyle{plain}
\newtheorem{thm}{Theorem}[section]
\newtheorem{lem}[thm]{Lemma}
\newtheorem*{thm*}{Theorem}
\newtheorem*{lem*}{Lemma}
\newtheorem*{cor*}{Corollary}
\newtheorem*{rem*}{Remark}
\theoremstyle{definition}
\newcommand{\D}{\mathcal{D}}
\newcommand{\F}{\mathcal{F}}
\newcommand{\R}{\mathcal{R}}
\newcommand{\be}{\begin{equation}}
\newcommand{\ee}{\end{equation}}
\renewcommand{\a}{\alpha}
\newcommand{\s}{\sigma}
\renewcommand{\b}{\beta}
\newcommand{\g}{\gamma }
\newcommand{\e}{\epsilon}
\renewcommand{\t}{\tau}
\renewcommand{\Re}{{\rm{Re}}}
\renewcommand{\Im}{{\rm{Im}}}
\renewcommand{\i}{{\mathrm{i}}}    
\renewcommand{\d}{{\mathrm{d}}}
\begin{document}
\title{Finite Euler Products and the Riemann Hypothesis}

\author{S. M. Gonek}

\address{Department of Mathematics, University of Rochester, 
Rochester, NY 14627, USA}
 \email{gonek@math.rochester.edu}

\thanks{This work was supported by NSF grant DMS 0201457 and by an
NSF Focused Research Group grant (DMS 0244660). }

\date{April 26, 2007}

\begin{abstract}
We  show that  if the Riemann  Hypothesis is true,  then in a region containing most 
of the right-half of the critical strip, the Riemann zeta-function is well approximated  
by short truncations of its Euler product. Conversely, if the approximation by  products
is good in this region,  the zeta-function has at most  finitely many zeros in it.  
We then construct a parameterized family of non-analytic functions with this same property.  
With the possible exception of a finite number of zeros off the critical line, every 
function  in the family satisfies a Riemann Hypothesis. Moreover, when  
the parameter is not too large, they have about the same number of zeros as the 
zeta-function, their zeros are all simple, and they ``repel''.  The  structure of  these  
functions makes the reason for the simplicity and repulsion of  their zeros  apparent and 
suggests a mechanism that might be responsible for the corresponding properties 
of the zeta-function's zeros.  Computer evidence suggests that the  zeros of functions in the 
family are remarkably close  to those of the zeta-function (even for small values of the parameter), 
and we show  that they indeed  converge to them as the parameter increases. 
Furthermore, between zeros of the zeta-function, the moduli of   functions in the family tend to twice the 
modulus of the zeta-function. Both   assertions assume the Riemann Hypothesis. We end by discussing analogues 
for other L-functions and show how they give insight into the study  of the distribution of zeros of linear 
combinations of L-functions.  
\end{abstract}

\maketitle

\tableofcontents
\section{Introduction}
 
Why  should the Riemann Hypothesis be true?  If all the  zeros of the zeta-function 
are simple, why?  Why do the zeros seem to repel each other?  Analytic number theorists  
believe  that an eventual proof of the Riemann Hypothesis must use both the Euler product 
and functional equation of the zeta-function.  For there are functions with similar functional 
equations  but  no Euler product,  and    functions with  an Euler product  but no functional 
equation, for which the Riemann Hypothesis is false.  But \emph{why}  are these two ingredients  
essential?

This paper began as an attempt to gain insight into these questions.   

Section 2 begins with a brief discussion of the approximation of $\zeta(s)$
by truncations of its Dirichlet series. In Section 3 we turn to the approximation  
of $\zeta(s)$  by truncations of its Euler product. We show that if the Riemann Hypothesis
is true, then short  products approximate the zeta-function well in a region containing most 
of the right-half of the critical strip. Conversely,  if  the approximation 
by products is good in this region, the zeta-function has at most  finitely many zeros in it.  
Section 4 is a slight departure from the  main direction of the paper, but  we include it in 
order  to deduce some imediate  consequences  of the results of Section 3.  
In Section 5  we construct a parameterized family,  $\{\zeta_X(s)\}$,  
of functions related to the zeta-function with the same type of approximation property
as  the finite Euler products. That is, if the Riemann Hypothesis is true, then $\zeta_X(s)$ is a good 
approximation of $\zeta(s)$ in a region containing most of the right-half of the critical strip and, 
if $\zeta_X(s)$ is a good approximation of $\zeta(s)$ in this region, then $\zeta(s)$ can have at most
 finitely many zeros there.
In Section 6 we show that, with the possible exception of a few  low lying zeros,  
a Riemann Hypothesis   holds for each $\zeta_X(s)$.  In Sections 7 and 8, respectively, we prove 
that on the Riemann Hypothesis, if the parameter $X$  is not too large,  then  $\zeta_X(s)$ has about 
the same number of zeros as $\zeta(s)$ and  that its zeros are all simple (again with the possible 
exception of a few  low lying ones). We also show unconditionally that when the parameter 
is much larger, $\zeta_X(s)$ still has asymptotically the same number of zeros as $\zeta(s)$ 
and that $100\%$ of these  (in the density sense) are simple. In the next section we study the 
relationship between the  two functions on the critical line. Assuming the Riemannn Hypothesis, 
we  show that the zeros of  $\zeta_X(s)$ converge to the zeros of $\zeta(s)$ as $X \to \infty$
and that between the zeros  of the zeta-function $|\zeta_X(\frac12+it)| \to  2 |\zeta(\frac12+it)|$. 
In Section 10 we suggest 
possible causes for the simplicity  and  repulsion of the zeros of $\zeta(s)$ in light of 
the structure of   $\zeta_X(s)$. In the last section we illustrate how our results generalize to 
other L-functions by defining functions $L_X(s, \chi)$ corresponding to the Dirichlet L-function
$L(s, \chi)$. We then  study the distribution of  zeros of linear combinations of 
$L_X(s, \chi)$. This suggests a  heuristic  different from the usual one (of carrier waves) 
for understanding why linear combinations of the standard L-functions should have $100\%$
of their zeros on the critical line.  The  appendix provides some useful approximations of the 
zeta-function by Dirichlet polynomials.

The functions $\zeta_X(s)$ are simpler than the Riemann zeta-function,  
yet they  capture  some of its most  important structural features. 
It  therefore makes sense to regard them as models  of the zeta-function.  
The modeling is probably best when $X$ is large,  say a power of $t$. 
Unfortunately, our  results   are most satisfactory  only for somewhat smaller 
$X$ ranges, so it would be  interesting if one could  extend them.

We began by raising several deep questions. We do not offer definitive 
answers here, just ones that might be suggestive.
For example, we shall show
that most of the zeros of $\zeta_X(s)$ are  simple 
and we shall see a mechanism that causes many of them, if not most, to repel. 
In fact, we shall prove that \emph{all} the zeros  of $\zeta_X(s)$ (above a certain height) 
are simple and repel if $X$ is not too large. Since  $\zeta_X(s)$ mimics $\zeta(s)$,  this suggests 
that the zeros of  the latter should share these properties. And
concerning the question of  why both the Euler product and functional equation are 
necessary for the  truth of the Riemann Hypothesis, one possible interpretation of our 
results is this:  the Euler product prevents the zeta-function from having zeros off the line, 
the functional equation puts them on it. 
  
I wish to thank Enrico Bombieri, Dimitri Gioev, Jon Keating, and Peter Sarnak for  very helpful conversations and communications. I owe an especially great debt to  Dimitri Gioev for our  stimulating discussions
over many months and for the extensive computer calulations he performed.


\section{The approximation of  $\zeta(s)$ by finite Dirichlet series}
\label{series approx}

Throughout we write $s=\s+it$  and $\t=|t|+2$; $\e$ denotes an arbitrarily small positive 
number which may not be the same at each occurrence. 

The Riemann zeta-function is analytic in the entire complex plane, 
except for a simple pole at $s=1$ and in the half-plane $\s >1$ it is given by
the   absolutely convergent series
\begin{equation*}\label{zeta sum defn}
\zeta(s) = \sum_{n=1}^{\infty}  n^{-s}\,.
\end{equation*}
Estimating the tail of the series trivially, we obtain  
the approximation
\begin{equation}\label{zeta approx 1}
\zeta(s) = \sum_{n=1}^{X} n^{-s} + O\left(\frac{X^{1-\s}}{\s-1} \right)
\end{equation}
for $\s >1$ and $X \geq 1$.  
A crude form of the approximate functional equation  (see Titchmarsh~\cite{T})
extends this into the critical strip:
 \begin{equation}\label{zeta approx 2}
\zeta(s) = \sum_{n=1}^{X} n^{-s} +  \frac{X^{1-s}}{s-1}
+ O(X^{-\s})\,.
\end{equation}
This holds  uniformly for $\s \geq\s_0>0$, provided that $X \geq C\; \t /2\pi$, 
where $C$ is any constant greater than $1$. The second term on the right-hand
side reflects the simple pole of $\zeta(s)$ at $s=1$ and, if we stay away from it,
it can be ignored. For instance, setting $X = t$ and assuming $t \geq 1$,  
we find that
\begin{equation}\label{zeta approx 3}
\zeta(s) = \sum_{n\leq  t } n^{-s} + O(t^{-\s})
\end{equation}
uniformly for $\s \geq\s_0>0$. 
Thus, truncations of the Dirichlet series defining
$\zeta(s)$ approximate it well, even in the critical strip. 

Now suppose  that    the Lindel\"{o}f Hypothesis is true. 
That is, that
\begin{equation*}\label{LH}
\zeta(\tfrac12 + \i t) \ll  \t^{\epsilon}.   
\end{equation*}
Then the length of the series in \eqref{zeta approx 2}
and \eqref{zeta approx 3} can be considerably reduced,
as the following modification of Theorem 13.3 of 
Titchmarsh~\cite{T} shows. (See the Appendix.)

\begin{thm}\label{thm on Lindelof 1}
Let   $\s$ be bounded, $ |\sigma| \geq \frac12$,
and $|s-1|> \frac{1}{10}$.   
Also let $  1 \leq X  \leq t^{2}$.
A necessary and 
sufficient condition for the truth of the Lindel\"{o}f Hypothesis is that   
\begin{equation*}\label{Lindelof approx 4}
 \zeta(s) = \sum_{n \leq X} \frac{1}{n^s}   
+  O\left(  X^{ \frac12 - \sigma}\t^{\epsilon}   \right) \,.
\end{equation*}
 \end{thm}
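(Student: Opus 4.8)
The plan is to prove the two implications separately. That ``the approximation implies the Lindel\"of Hypothesis'' is immediate: apply the displayed formula with $\s=\tfrac12$ and the fixed value $X=2$ (admissible as soon as $t^{2}\ge2$; for smaller $t$ the Lindel\"of bound is trivial). The retained terms $1+2^{-1/2-\i t}$ are $O(1)$ and the error is $O(2^{0}\t^{\e})=O(\t^{\e})$, so $\zeta(\tfrac12+\i t)\ll\t^{\e}$.

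For the converse, assume the Lindel\"of Hypothesis. Applying the Phragm\'en--Lindel\"of principle to the entire function $\zeta(s)-1/(s-1)$ in a fixed vertical strip (to keep clear of the pole), and using the functional equation for $\s<\tfrac12$, upgrades the hypothesis to the convexity-type estimate $\zeta(\s+\i t)\ll\t^{\max(1/2-\s,\,0)+\e}$, uniformly for $\s$ in any fixed bounded interval with $|s-1|>\tfrac1{10}$. It is enough to treat $\tfrac12\le\s\le A$; the range $\s\le-\tfrac12$ then follows from $\zeta(s)=\chi(s)\zeta(1-s)$ and Stirling's bound $|\chi(s)|\asymp(\t/2\pi)^{1/2-\s}$. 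Fix such a $\s$, let $c$ be slightly larger than $\max(1-\s,0)$, and set $U=\t^{K}$ with $K=K(A)$ a large constant to be chosen. The truncated Perron formula gives
\be
 \sum_{n\le X}n^{-s}=\frac{1}{2\pi\i}\int_{c-\i U}^{c+\i U}\zeta(s+w)\,\frac{X^{w}}{w}\,\d w+E,
\ee
where the term $n=X$ is halved when $X\in\ZZ$ (a discrepancy $\le\tfrac12X^{-\s}\le X^{1/2-\s}$, which we absorb), and where a routine estimate of the Perron remainder, using $X\le t^{2}\le\t^{2}$, gives $E\ll X^{1/2-\s}\t^{\e}$ once $K$ is large enough. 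Applying Cauchy's theorem to the rectangle with vertical sides $\Re w=c$ and $\Re w=\a$, with $\a=\tfrac12-\s-\delta$ for a small fixed $\delta>0$, moves the integral to the line $\Re w=\a$ at the cost of the residues at $w=0$ and $w=1-s$---namely $\zeta(s)$ and $X^{1-s}/(1-s)$, the latter well defined because $s\ne1$---together with two horizontal edges at height $\pm U$.

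It remains to show that $X^{1-s}/(1-s)$, the integral along $\Re w=\a$, the horizontal edges and $E$ are each $\ll X^{1/2-\s}\t^{\e}$. Since $\s$ is bounded and $|s-1|>\tfrac1{10}$ one has $|1-s|\asymp\t$, so $|X^{1-s}/(1-s)|\asymp X^{1-\s}/\t=(X^{1/2}/\t)\,X^{1/2-\s}\le X^{1/2-\s}$, the final step being the hypothesis $X\le t^{2}$ in the form $X^{1/2}\le|t|\le\t$. On the line $\Re w=\a$ one has $\Re(s+w)=\tfrac12-\delta$, so $\zeta(s+w)\ll(|t+\Im w|+2)^{\delta+\e}$ by the convexity-type estimate above; splitting the integral at $|\Im w|=|t|/2$ and using $|X^{w}|=X^{\a}=X^{-\delta}X^{1/2-\s}\le X^{1/2-\s}$ bounds it by $X^{1/2-\s}\t^{K(\delta+\e)+\e}$, hence by $X^{1/2-\s}\t^{\e}$ once $\delta$ is small and $\e$ is relabelled. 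The horizontal edges sit at height $\asymp U$, where the classical convexity bound gives $\zeta\ll U^{1/4+\delta+\e}$ while $|X^{w}|/|w|\ll X^{c}/U\ll X/U$; since $X^{1/2+\s}\le\t^{1+2A}$, any $K>\tfrac43(1+2A)$ makes the edges $\ll X^{1/2-\s}\t^{\e}$, and the same $K$ disposes of $E$. Assembling the identity, the residues, and these bounds yields $\zeta(s)=\sum_{n\le X}n^{-s}+O(X^{1/2-\s}\t^{\e})$.

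The main obstacle is the uniformity in $X$ all the way up to $t^{2}$: each error term carries a positive power of $X$---the residue $X^{1-s}/(1-s)$, the factor $X^{c}$ on the horizontal edges, and the implicit power of $X$ in the Perron remainder---so the argument closes only because of the combined constraints $X\le t^{2}$, $|s-1|>\tfrac1{10}$ and boundedness of $\s$, and because the truncation height $U$ may be taken to be a fixed large power of $\t$. Obtaining the clean exponent $\e$ in the integral along $\Re w=\a$, rather than some fixed power of $\t$, is precisely the step that uses the full strength of the Lindel\"of Hypothesis, via Phragm\'en--Lindel\"of.
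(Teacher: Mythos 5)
Your proof of the converse (Lindel\"of $\Rightarrow$ approximation) in the range $\frac12 \le \s \le A$ is correct and proceeds by the same route as the paper's Appendix: write $\sum_{n\le X} n^{-s}$ by truncated Perron, shift the contour so that $\Re(s+w)$ sits just to the left of $\tfrac12$, collect the residues $\zeta(s)$ at $w=0$ and $X^{1-s}/(1-s)$ at $w=1-s$, and control the remaining edges and the Perron remainder using Lindel\"of together with $X\le t^{2}$. The paper writes out the Riemann Hypothesis version (with $e^{C_1\Phi(t)}$ in place of $\t^{\e}$), remarking that Lindel\"of is an ``easy modification''; you carry out the Lindel\"of version directly. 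Your parameter choices ($c$ slightly above $\max(1-\s,0)$, $\a=\tfrac12-\s-\delta$, $U=\t^{K}$) differ from the paper's ($c=\tfrac12+1/\log X$, $b=\tfrac12-\s-1/\log\t$, $U=\t+X^{\s}$), but either bookkeeping closes. The easy direction is as you say.

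There is one misstep, though. You assert that ``the range $\s\le-\tfrac12$ then follows from $\zeta(s)=\chi(s)\zeta(1-s)$ and Stirling.'' That reduction does not work: substituting the approximation for $\zeta(1-s)$ yields $\chi(s)\sum_{n\le X}n^{s-1}$, which is not $\sum_{n\le X}n^{-s}$, and the error term is multiplied by $|\chi(s)|\asymp\t^{1/2-\s}\gg\t$. In fact the contour argument itself does not produce the theorem there: for $\s\le-\tfrac12$ one has $\a=\tfrac12-\s-\delta>0$, so the shifted contour never encloses $w=0$ and the residue $\zeta(s)$ never appears. And the displayed formula is simply false in that range: take $\s=-\tfrac12$, $X=2$; then $|\zeta(-\tfrac12+it)|\asymp\t$ by the functional equation, while $\sum_{n\le2}n^{-s}+O(X^{1/2-\s}\t^{\e})=O(\t^{\e})$. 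The condition ``$|\s|\ge\tfrac12$'' in the statement you were given is evidently a slip for ``$\s\ge\tfrac12$''; the Appendix states and proves the theorem for $\tfrac12\le\s\le2$, which is the correct reading, and for that range your argument is sound as it stands.
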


It follows from this that 
if the Lindel\"{o}f Hypothesis is true and we  stay away 
from the pole of the zeta-function at $s=1$, then
$\zeta(s)$ is well approximated by arbitrarily short truncations of its 
Dirichlet series in the half plane $\s > \frac12 $. Of course, we saw that this is unconditionally 
true in the half plane $\s > 1$. 

On the other hand, short sums can \emph{not} 
approximate  $\zeta(s)$ well in the strip  $0<\s \leq 1/2$. 
For suppose that such a sum and $\zeta(s)$ were within 
$\e$ of each other, where $\e>0$ is small. Then  we would have
\begin{equation}\label{difference ineq}
\int_{T}^{2T} \left| \zeta(s)   -  \sum_{n\leq X}n^{-s} \right|^2 dt
\leq \e^2 T\,.
\end{equation}
However, if $0< \s \leq \frac12$ is fixed and $X<T^{1-\e}$, then  
$$
\int_{T}^{2T} \left| \sum_{n\leq X}\frac{1}{n^{s}} \right|^2 dt \sim
\begin{cases} T \left(\frac{X^{1-\s}-1}{1-\s}  \right)  & \hbox{ if } \s < \frac12, \\
T \log X  & \hbox{ if } \s=\frac12  \end{cases}
$$
and  
$$
\int_{T}^{2T} \left| \zeta(\s + i t) \right|^2 dt  \sim 
\begin{cases} C(\s) T^{2-2\s}  & \hbox{ if } \s < \frac12, \\
T \log T  & \hbox{ if } \s=\frac12 \;. \end{cases}
$$
Comparing these when $ \s < \frac12$ and again when $\s = \frac12$, 
we obtain a contradiction to  \eqref{difference ineq}.
 This argument is unconditional  
 and shows that one cannot do   better then 
\eqref{zeta approx 3} even if the Lindel\"{o}f  or Riemann Hypothesis is true.

To summarize, $\zeta(s)$ is  well-approximated  unconditionally by arbitrarily short truncations of its
Dirichlet series in the region $\s>1$, $|s-1|>\frac{1}{10}$.  
On the Lindel\"{o}f Hypothesis 
this remains true even in the right-half of the critical strip, $\frac12<\s\leq1$. However, 
on and to the left of the critical line, the length of the truncation must be  
 $\approx t$. The situation is the same if we assume  the 
Riemann Hypothesis instead of  the Lindel\"{o}f Hypothesis, since the former implies the latter.


\section{The approximation of  $\zeta(s)$ by finite Euler products}\label{product approx}

The zeta-function also has  the   Euler product representation
\begin{equation*}
\zeta(s)= \prod_{p} \left(  1 - \frac{1}{p^s} \right)^{-1}  
\end{equation*}
in the half plane $\s>1$,
where the product is over all  prime numbers. This converges absolutely
 and it is straightforward to show (take logarithms)
that
\begin{equation}\label{prod 1}
\zeta(s)= \prod_{p \leq X} \left(  1 - \frac{1}{p^s} \right)^{-1} 
\left( 1+ O\left(\frac{X^{1-\sigma}}{(\s-1)\log X} \right) \right) ,
\end{equation}
for $\s > 1 $.
Here we implicitly use the fact that $\zeta(s)$ 
does not vanish in $\s>1$.
As is often the case, it is more natural from an analytic point of view to work with   
weighted approximations, so we will use  expressions  of the type 
\begin{equation*}\label{eq:defn_P}
 \exp\left(\sum_{n }\frac{\Lambda(n) v(n)}
 {n^{s}\log n} \right),
\end{equation*}
where $\Lambda(n)$ is von Mangoldt's function and  the weights  $v(n)$ 
will be specified later. 
 
We next ask whether  it is possible to extend  \eqref{prod 1}  (or 
a weighted form of  it) into the critical strip in the same way that \eqref{zeta approx 2}  
extended \eqref{zeta approx 1}. A recent result of  Gonek, Hughes, and Keating~\cite{GHK}  
suggests an answer.   It says   that if   $X < t^{1-\e}$ and  $X$ is not too small, then
$\zeta(s)$ factors  in the region $\sigma \geq 0$,  $|s-1| > \frac{1}{10}$ 
as
\begin{equation}\label{hybrid}
 \zeta(s) =  \exp\left(\sum_{n\leq X }\frac{\Lambda(n)  }
 {n^{s}\log n} \right) Z_{X}(s) \big( 1+o(1) \big) \,,
\end{equation}
where $Z_X(s)$ is a certain product over the zeros of $\zeta(s)$.
Now, one can show that in the right-half of the critical strip $Z_{X}(s)$ is close to  $1$
as long as $s$ is not too  near a zero of $\zeta(s)$.   
Hence,  if  the Riemann Hypothesis is true and $s$ 
is not too close to the  critical line, $Z_{X}(s)$ will be close to  $1$. 
(The closer $\s$ is to $\frac12$,  the larger one needs to take $X$.) Thus, under the 
Riemann Hypothesis, an analogue of \eqref{prod 1} does hold in the 
right-half of the critical strip. 

To prove these assertions we  need an explicit version of \eqref{hybrid}  that
differs slightly from the one given by Gonek, Hughes and Keating~\cite{GHK}, 
and we derive this next.  

Write
 \begin{equation}\label{P}
P_{X}(s) = \exp \left( \sum_{n\leq X^2} 
\frac{\Lambda_X(n)}{n^{s} \log n}  \right),
\end{equation}
where 
\begin{equation*}
\Lambda_X(n) = 
\begin{cases} \Lambda(n) & \hbox{ if } n \leq X, \\
\Lambda(n)\left(2- \frac{\log n}{\log X}\right)& \hbox{ if } X< n \leq X^2, \\
0 & \hbox{ if } n > X^2 \;. 
\end{cases}
\end{equation*}
Also, let
\begin{equation}\label{E_2 defn}
E_2(z) = \int_{z}^{\infty}\frac{e^{-w}}{w^2} dw \qquad (z\neq0)
\end{equation}
denote the second exponential integral,  and set
\begin{equation*}\label{F_2 defn}
F_2(z) = 2E_2(2z) - E_2(z)\,.
\end{equation*}
We then define
\begin{equation}\label{Z}
  Z_X(s) =   \exp \bigg(  \sum_{\rho}  
 F_2\left((s-\rho)\log X \right)   
 - F_2\left((s-1)\log X \right)   \bigg) \,,
\end{equation}
where the sum is over all the non-trivial zeros, $\rho=\beta+i \g$, of the zeta-function.

Our variant of \eqref{hybrid} is
\begin{thm}\label{thm on zeta}  
Let $ \s \geq 0$  and $  X \geq 2$. With 
$P_X(s) $ and $Z_X(s) $  as above we have
\begin{equation}\label{zeta form 1}
\zeta(s) = P_{X}(s) Z_X(s)
 \left(1+  O\left(  \frac{X^{-\s-2  }  }{\t^2 \log^2  X}  \right) \right)\,.
\end{equation}
\end{thm}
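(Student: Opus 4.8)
The plan is to start from the classical explicit formula connecting the prime side of $\zeta$ to its zeros, and then reinterpret both sides in terms of the functions $P_X(s)$ and $Z_X(s)$ defined above. Concretely, I would begin with the formula
\begin{equation*}
\frac{\zeta'}{\zeta}(s) = -\sum_{n} \frac{\Lambda(n)}{n^s} \cdot (\text{truncation weight}) + \sum_{\rho} (\cdots) + (\text{pole term at } s=1) + (\text{trivial zeros}),
\end{equation*}
or rather its integrated form against a suitable kernel. The key idea is that the weights $\Lambda_X(n)$ in \eqref{P} are exactly the Cesàro-type weights one obtains from integrating $\Lambda(n) n^{-s}$ against the kernel whose Mellin transform produces the function $F_2$; the piecewise-linear cutoff ($\Lambda(n)$ for $n \le X$, tapering linearly to $0$ at $n = X^2$) is the Fejér-like smoothing that corresponds, on the zero side, to the smooth weight $F_2((s-\rho)\log X)$. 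So the first real step is to establish the identity: for a smooth compactly-supported-in-logarithm weight $w$, one has
\begin{equation*}
\log\bigg(\exp\Big(\sum_n \frac{\Lambda(n) w(n)}{n^s \log n}\Big)\bigg) = \log \zeta(s) - \sum_\rho f((s-\rho)\log X) + f((s-1)\log X) + (\text{trivial-zero contribution}),
\end{equation*}
where $f$ is determined by $w$ through a Mellin/Laplace transform pair, and then to check that the specific choice giving $\Lambda_X$ yields $f = F_2$ together with a negligible trivial-zero term (the trivial zeros are at $-2, -4, \dots$, far from $\sigma \ge 0$, and $F_2$ decays, so their total contribution is absorbed into the error). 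I would carry this out by taking logarithmic derivatives, writing $-\zeta'/\zeta(s) = \sum \Lambda(n) n^{-s}$, applying the Hadamard-product / partial-fraction expansion $\zeta'/\zeta(s) = b - \frac{1}{s-1} + \sum_\rho (\frac{1}{s-\rho} + \frac{1}{\rho}) - \frac12 \sum (\cdots)$ for the trivial zeros, multiplying by the kernel, and integrating; the exponential integral $E_2$ arises precisely because $F_2' $ relates to $E_1$-type integrals and the double-exponential-integral structure $2E_2(2z) - E_2(z)$ encodes the "difference of two Cesàro means at scales $X$ and $X^2$" that produces the tent-shaped weight.

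The second step is the error analysis. After establishing the exact identity, the claim \eqref{zeta form 1} amounts to bounding the tail contributions that were dropped when passing from the (formally infinite) sums to the stated truncations, plus controlling the convergence of $\sum_\rho F_2((s-\rho)\log X)$. For the prime side this is routine: $\Lambda_X(n)$ vanishes for $n > X^2$ by construction, so there is genuinely no tail there — the only slack is in how accurately the tent weight reproduces $\log\zeta$, and that discrepancy is exactly what is being packaged. For the zero side I would use the standard density estimate $N(T+1) - N(T) \ll \log T$ together with the decay $F_2(z) \ll |z|^{-2} e^{-\Re z}$ (for $\Re z$ bounded below, say, or more carefully $|F_2(z)| \ll |z|^{-3}$ type bounds via integration by parts in \eqref{E_2 defn}) to show the sum over $\rho$ converges absolutely and that truncating it or comparing it to $Z_X(s)$ introduces only $O(X^{-\sigma-2}\tau^{-2}\log^{-2}X)$. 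The shape of the error term — with its $X^{-\sigma-2}$ and $\tau^{-2}\log^{-2}X$ — is the tell-tale sign that it comes from the $n > X^2$ regime of an $E_2$-type integral remainder and from a second-order Taylor/integration-by-parts step, which fixes the powers of $\log X$ and $\tau$.

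I expect the main obstacle to be bookkeeping rather than conceptual: one must track the contour of integration and the region of validity carefully, because $\log \zeta(s)$ is multivalued and $\zeta$ has zeros, so the identity relating $\log \zeta$ to the sum over $\rho$ has to be set up as an identity between analytic branches (or, more cleanly, differentiated first and then integrated back along a path avoiding zeros), and one must verify that the $s=1$ pole contributes exactly the $-F_2((s-1)\log X)$ term appearing in \eqref{Z} with the right sign and no extra constant. The second delicate point is uniformity down to $\sigma = 0$: near $\sigma = 0$ some zeros $\rho$ (if RH fails) could be close to $s$, but $F_2$ is entire in a neighborhood of the relevant range except at $0$, and the statement does not claim $Z_X$ is small — only that the factorization holds — so no cancellation or zero-avoidance is needed, and the error bound survives. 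Once the exact identity is in hand, exponentiating and collecting the $(1+O(\cdot))$ factor is immediate.
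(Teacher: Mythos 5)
Your proposal takes essentially the same route as the paper: Gonek starts from the weighted explicit formula (Titchmarsh, Theorem 14.20) that already has the tent weight $\Lambda_X(n)$ on the prime side and $\big(X^{\rho-s}-X^{2(\rho-s)}\big)/\big((s-\rho)^2\log X\big)$ on the zero side, integrates in $s$ from $\infty$ to $s_0$ — which is exactly where $F_2$ appears, via $\int_\infty^{s_0}\frac{Y^{\rho-s}-Y^{2(\rho-s)}}{(s-\rho)^2}\,ds=\log Y\cdot F_2\big((s_0-\rho)\log Y\big)$ — and then exponentiates, with a limiting argument at zero ordinates using $F_2(z)=\log 4z+f_2(z)$. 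Your derivation of that explicit formula from the Hadamard partial-fraction expansion convolved with the kernel is precisely how Titchmarsh's 14.20 is proved, so this is the same proof, just not citing the ready-made reference. One small correction: the error term $O\!\left(X^{-\sigma-2}/(\tau^2\log^2X)\right)$ does not come from an ``$n>X^2$ remainder'' — as you yourself note, the prime side is exactly finite, with no tail at all. It comes entirely from the trivial-zero sum $\frac{1}{\log X}\sum_{q\ge1}\frac{X^{-(2q+s)}-X^{-2(2q+s)}}{(s+2q)^2}$: the $q=1$ term gives the $X^{-\sigma-2}$ and the $\tau^{-2}$, one $\log X$ is already present, and a second arises on integrating in $s$. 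Attributing it correctly matters because it explains why that term can be dropped into an error rather than needing a corresponding $F_2$-factor in $Z_X$.
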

\begin{proof}
We   begin with the explicit formula (see Titchmarsh~\cite{T}, Theorem 14.20)
\begin{align*}
\frac{\zeta^{'}}{\zeta}(s) 
=   - \sum_{n\leq X^2} \frac{\Lambda_X(n)}{n^s} \notag
&+ \frac{X^{2(1-s)} - X^{(1-s)}}{(1-s)^2  \log X} 
+   \sum_{\rho} \frac{X^{\rho-s} - X^{2(\rho-s)}}{(s-\rho)^2 \log X    }  \\
& \qquad + \frac{1}{\log X} \sum_{q=1}^{\infty}  
\frac{X^{-( 2q+s)} - X^{-2(2q+s)}}{(s+2q)^2  }    \,.
\end{align*}
The last term on the right is easily seen to be
$\ll
  X^{-\s-2}/\t^2  \log X $, so  we have
\begin{align}\label{formula 1}
\frac{\zeta^{'}}{\zeta}(s) 
=   - \sum_{n\leq X^2} \frac{\Lambda_X(n)}{n^s}  
+ \frac{X^{2(1-s)} - X^{(1-s)}}{(1-s)^2  \log X} +    
 \sum_{\rho} \frac{X^{\rho-s} - X^{2(\rho-s)} }{(s-\rho)^2 \log X } 
+ O\left( \frac{ X^{-\s-2}}{\t^2  \log X} \right)\,.
\end{align}

Next we integrate \eqref{formula 1} from  $\infty$ to  $s_0= \s_0+i t$,
where $\s_0+i t$ is not a 
zero of the zeta-function. We use the convention that if $t$ is the ordinate of a zero
$\rho=\beta +i \gamma$ and   $0\leq  \s_0 < \beta$, then 
\begin{equation}\label{convention}
\log \zeta(\s_0+i \g) =   \lim_{\e \to 0^+}  \log \zeta \left(\s_0+i (\g +\e ) \right)  \,.
\end{equation}
The $O$-term  contributes
$$
\ll \frac{ X^{  -\s_0-2  } }{\t^2  \log^2 X}  \;.
$$
We also see that
 $$
 \int_{\infty }^{\s_0+i t} \frac{Y^{\rho-s} -Y^{2(\rho-s) } }{(s-\rho)^2  } ds
 =  \log Y \,F_2\big((s_0-\rho)\log Y\big)\,.
 $$
Here we use the  convention analogous to \eqref{convention} if $t$
is the ordinate of a zero.
It follows that 
\begin{align}\label{log zeta}
\log \zeta(s_0) 
=    \sum_{n\leq X^2} \frac{\Lambda_X(n)}{n^{s_0} \log n} \notag
+       \sum_{\rho} F_2\left((s_0-\rho)\log X \right) 
- F_2\left((s_0-1)\log X \right)   
+ O\left( \frac{X^{ -\s_0 -2 }  }{\t^2 \log^2 X} \right)\,.
\end{align} 
Replacing $\s_0$ by $\s$ and exponentiating both sides,  we obtain the stated result
when $s$ is not equal to a zero $\rho$. If it is, we may interpret the factor in
 \eqref{Z}   corresponding to  $\rho$ as 
$
\lim_{\e \to 0^+}\, \exp \left(
F_2\left(i \e \log X \right)
 \right).
$
 From the  well known formula
$E_2(z) = 1/z + \log z + e_2(z)$,
where $ |\arg z|< \pi $ and $e_2(z)$ is analytic in $z$, it follows that
\begin{equation}\label{F_2 formula 1}
 F_2(z) =  \log 4z + f_2(z)      \qquad (|\arg z|<\pi), 
\end{equation}
where $f_2(z) $ is also analytic.
Therefore  
$
\lim_{\e \to 0^+}\, \exp \left(
F_2\left(i \e \log X \right)
 \right) =0\,.
$
Since this agrees with the left-hand side of \eqref{zeta form 1},
 the formula is valid in this case as well. 
\end{proof}

Before stating our next result we require some notation and a lemma.    
As usual we write  $S(t) =(1/\pi )\arg \zeta(\frac12+it)$ with the convention that 
if $t$ is the ordinate of a zero,  
$
S(t) =   \lim_{\e \to 0^+} S(t+\e)   .
$ 
For $t \geq 0$ we  let $\Phi(t)$ denote  a positive increasing differentiable 
function such that 
\begin{equation*}\label{S and Zeta bound}
|S(t)| \leq \Phi(t) \qquad \hbox{and} \qquad  
 |\zeta(\frac12+i t)| \ll \exp(\Phi(t)) \,, 
\end{equation*} 
and  such that for $t$ sufficiently large we have
\begin{equation}\label{Phi}
\Phi^{'}(t)/\Phi(t) \ll \frac1{t \log t }\,.
\end{equation} 
We call such a function \emph{admissible}.
Note that any function of the type $f(t)=(\log\t)^\a (\log\log3\t)^\b$
with $\a$ positive satisfies \eqref{Phi}. Furthermore, it is easily checked that 
if $\Phi$ satisfies  \eqref{Phi}, then 
\begin{equation}\label{Phi ineq}
\Phi(t^a) \ll \Phi(t)\,,
\end{equation}
where the implied constant depends at most on $a$.
It is known that  $\Phi(t)= \frac16\log \t$ is admissible and, 
on the Lindel\"{o}f Hypothesis, that $\e \log \t$ is for any $\e>0$.  
If the Riemannn Hypothesis is true, then $\Phi(t)= \frac12\log \t/\log\log 2\t$ is 
 admissible. (The constant $\frac12$ is  a recent result due to Goldston and 
Gonek~\cite{GG}.)
Balasubramaian and Ramachandra~\cite{BR} (see also Titchmarsh~\cite{T}, 
pp.208-209 and p. 384) have shown that  if $\Phi$ is admissible,   
$\Phi(t)= \Omega(\sqrt{\log \t/\log\log 2\t}\,)$, and this is  unconditional.
Farmer, Gonek and Hughes~\cite{FGH} have conjectured
 that $\Phi(t)= \sqrt{(\frac12+\e)\log \t \log\log 2\t}$ is admissible, but   
 $\Phi(t)= \sqrt{(\frac12-\e)\log \t\log\log 2\t}$ is not.

For the remainder of this paper $\Phi$ will always denote an admissible function.

We can now state our lemma.
\begin{lem}\label{Zero Sum} Assume the Riemann Hypothesis.
Suppose that $\Phi(t)$ is admissible and that 
 $\s >\frac12 $ is bounded. Then we have
\begin{equation}\label{fund sum}
\sum_{\g} \frac{\s-\frac12}{(\s-\frac12)^2+(t-\g)^2} \ll  \log \t 
+ \frac{\Phi(t)}{\s-\frac12} \,.
\end{equation}
Moreover, if $  \Delta >0$, then
\begin{equation}\label{fund sum 2}
\sum_{|\g-t| > \Delta} \frac{1}{ (t-\g)^2} 
\ll  \frac{1}{\Delta} \left(\log \t  + \frac{\Phi(\t)}{\Delta} \right) \,.
\end{equation}
\end{lem}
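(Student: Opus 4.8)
The plan is to estimate both sums via the standard Riemann--von Mangoldt machinery, namely the partial-fraction / logarithmic-derivative identity
\[
\Re\frac{\zeta'}{\zeta}(s) = \sum_{\rho}\Re\frac{1}{s-\rho} + O(\log\tau)
\]
valid for $s=\sigma+it$ with $\sigma$ bounded, combined with the fact that under RH each term $\Re\frac{1}{s-\rho} = \frac{\sigma-1/2}{(\sigma-1/2)^2+(t-\gamma)^2}$ is \emph{non-negative}. Thus the left side of \eqref{fund sum} is exactly $\Re\frac{\zeta'}{\zeta}(\sigma+it) + O(\log\tau)$ up to the pole term at $s=1$ (harmless since $\sigma$ is bounded and we may restrict to large $t$). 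First I would bound $\Re\frac{\zeta'}{\zeta}(\sigma+it)$ for $\sigma>\frac12$ bounded. The cleanest route is to write $\log\zeta(\sigma+it) = \log\zeta(\sigma_1+it) + \int$, or better, to use the two-point form: for $\sigma>\frac12$,
\[
\frac{\zeta'}{\zeta}(\sigma+it) = \int_{\sigma}^{\infty}\!\Big(\!-\frac{\zeta'}{\zeta}\Big)'\!(u+it)\,du \;+\;\text{(boundary)} ,
\]
but the efficient way is simply: $\log|\zeta(\sigma+it)|$ is bounded above by $\log|\zeta(\tfrac12+it)| + \int_{1/2}^{\sigma}\big|\tfrac{\zeta'}{\zeta}(u+it)\big|\,du$ is circular, so instead I would use Hadamard three-lines / the Borel--Carathéodory approach around the admissible bound $|\zeta(\tfrac12+it)|\ll\exp(\Phi(t))$. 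Concretely: apply Borel--Carathéodory to a disc centered at $2+it$ (where $\log\zeta$ is bounded) of radius chosen to reach just past $\sigma$, with the larger concentric disc reaching back to the critical line where $\Re\log\zeta = \log|\zeta(\tfrac12+it)| \le \Phi(t)+O(1)$; this yields $\big|\frac{\zeta'}{\zeta}(\sigma+it)\big| \ll \frac{\Phi(t)}{\sigma-1/2} + \log\tau$ for $\tfrac12<\sigma$ bounded. Feeding this back gives \eqref{fund sum}.

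For \eqref{fund sum 2} I would deduce it from \eqref{fund sum} by a standard device: take $\sigma - \tfrac12 = \Delta$ in \eqref{fund sum}. For $|\gamma - t| > \Delta$ one has
\[
\frac{\Delta}{\Delta^2+(t-\gamma)^2} \;\geq\; \frac{\Delta}{2(t-\gamma)^2},
\]
so that
\[
\sum_{|\gamma-t|>\Delta}\frac{1}{(t-\gamma)^2} \;\leq\; \frac{2}{\Delta}\sum_{\gamma}\frac{\Delta}{\Delta^2+(t-\gamma)^2} \;\ll\; \frac{1}{\Delta}\Big(\log\tau + \frac{\Phi(t)}{\Delta}\Big),
\]
using \eqref{Phi ineq} to replace $\Phi(t)$ by $\Phi(\tau)$ (they are comparable) and absorbing $\log\tau$ into $\log\bar\tau$ as written. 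One subtlety: if $\Delta$ is large (say $\Delta \gg t$) the choice $\sigma = \tfrac12+\Delta$ leaves the "bounded $\sigma$" hypothesis of \eqref{fund sum}, but then $\sigma-\tfrac12$ dominates everything and the bound can be obtained directly and more easily by comparing with $\sum_\gamma 1/((\sigma-\tfrac12)^2+(t-\gamma)^2)$ estimated crudely via $N(T+1)-N(T)\ll\log T$; I would dispatch that range separately.

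I expect the main obstacle to be the uniform bound on $\Re\frac{\zeta'}{\zeta}(\sigma+it)$ with the \emph{correct} dependence on $\Phi$ — in particular getting the factor $\Phi(t)/(\sigma-\tfrac12)$ rather than something weaker like $\Phi(t)\log\tau$ or $\Phi(t)/(\sigma-\tfrac12)^2$. This requires choosing the radii in the Borel--Carathéodory step with care (the inner radius essentially $\sigma-\tfrac12$, the outer radius a fixed multiple of it, staying a bounded distance from $s=1$ and from $\sigma=\tfrac12$), and using that on RH there are no zeros with $\beta>\tfrac12$ so $\log\zeta$ is analytic in the relevant half-disc. The rest is bookkeeping: handling the pole at $s=1$ (excluded or trivially bounded since we may assume $t$ large), and the passage from the full sum to the tail sum, which is the elementary inequality above.
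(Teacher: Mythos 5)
Your reduction of the zero-sum to $\Re\frac{\zeta'}{\zeta}(\sigma+it)+O(\log\tau)$ via the Hadamard partial-fraction expansion is correct, and your derivation of \eqref{fund sum 2} from \eqref{fund sum} (taking $\sigma-\frac12=\Delta$ and using $\frac{\Delta}{\Delta^2+(t-\gamma)^2}\ge\frac{\Delta}{2(t-\gamma)^2}$ for $|\gamma-t|>\Delta$) is a clean shortcut that works. But the Borel--Carath\'eodory step that is supposed to yield $\Re\frac{\zeta'}{\zeta}(\sigma+it)\ll\log\tau+\Phi(t)/(\sigma-\tfrac12)$ does not actually deliver the exponent you need, and your own description of the radii is internally inconsistent. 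If you center at $2+it$ (the only point where you control $\log\zeta$ unconditionally), then to let the outer circle brush $\Re s=\tfrac12$, where the admissible bound $\log|\zeta|\ll\Phi$ kicks in, you must take $R\to\tfrac32$; to reach $\sigma+it$ you need inner radius $r=2-\sigma=\tfrac32-(\sigma-\tfrac12)$, so both radii are of order $1$ while $R-r\asymp\sigma-\tfrac12$. The Cauchy/Borel--Carath\'eodory bound for a derivative then carries the factor $R/(R-r)^2\asymp(\sigma-\tfrac12)^{-2}$, giving $\Phi(t)/(\sigma-\tfrac12)^2$, which is too weak (it would ruin the application in \eqref{inequality 1} and give $\Phi/\Delta^3$ rather than $\Phi/\Delta^2$ in \eqref{fund sum 2}). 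To get the linear $(\sigma-\tfrac12)^{-1}$ you would need a disc of radius $\asymp\sigma-\tfrac12$ whose center is itself within $O(\sigma-\tfrac12)$ of the critical line; but then the Borel--Carath\'eodory constant is $\max_{|w|=R}\log|\zeta|-\log|\zeta(\text{center})|$, and you have no \emph{lower} bound on $|\zeta|$ near $\Re s=\tfrac12$ --- admissibility only controls $|S(t)|$ and $\log|\zeta|$ from above. So the proposed complex-analytic route stalls exactly where you feared.

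The paper avoids this entirely with an elementary counting argument. From the Riemann--von Mangoldt formula $N(t)=\frac{t}{2\pi}\log\frac{t}{2\pi}-\frac{t}{2\pi}+\frac78+S(t)+O(1/\tau)$ and $|S|\le\Phi$ one gets, with $a=\sigma-\tfrac12$,
\[
\#\{\gamma:|\gamma-t|\le a\}=N(t+a)-N(t-a)=\frac{a}{\pi}\log\frac{t}{2\pi}+O(\Phi(\tau)).
\]
Split the sum into blocks $ka\le|\gamma-t|\le(k+1)a$ for $0\le k\le 1/a$, plus the tail $|\gamma-t|>1$. Each block contains $O(a\log\tau+\Phi(\tau))$ ordinates, and each of its terms is at most $\frac{a}{a^2+(ka)^2}=\frac{1}{a(1+k^2)}$; so the block contributes $\ll\frac{1}{1+k^2}\bigl(\log\tau+\Phi(\tau)/a\bigr)$, and summing over $k$ gives $\ll\log\tau+\Phi(\tau)/a$. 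The tail contributes $\ll a\log\tau$. No passage through $\zeta'/\zeta$ is needed, and the $\Phi/a$ factor appears for a transparent reason: nearby zeros each contribute $\asymp 1/a$, and there are $O(a\log\tau+\Phi)$ of them. If you want to salvage your outline, replace the Borel--Carath\'eodory step with this block-counting argument; the rest of your proposal (the partial-fraction identity and the deduction of \eqref{fund sum 2}) then goes through.
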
                           
\begin{rem*} With more care we could show that the first sum 
equals $\frac12 \log \t + O\left( \Phi(\t)/(\s-\frac12) \right)$, but we 
do not require this.
\end{rem*}

\begin{proof}
For the sake of convenience we write $\s-\frac12 = a$. 
Recall that $N(t),$ the number
of zeros of $\zeta(s)$ with ordinates in $[0, t]$, is
\begin{equation*}\label{zero counting formula}
N(t) = \frac{t}{2\pi}\log \frac{t}{2\pi} - \frac{t}{2\pi} 
+\frac78 +S(t) + O\left( \frac{1}{\t} \right) \,.
\end{equation*}
Therefore
\begin{align}\label{zeros in interval}
 N(t+a)-N(t-a)  
= \frac{a}{\pi} \log \frac{t}{2\pi} + O(\Phi(\t ))  \,. 
\end{align}
 The  left-hand side of 
\eqref{fund sum} is 
\begin{equation*}\label{intermediate sum}
\ll  \sum_{0\leq k \leq 1/a} \quad
\left( \sum_{k a \leq |\g-t| \leq (k+1)a } 
\frac{a}{a^2+(t-\g)^2} \right) 
+ \sum_{|\g-t|>1} \frac{a}{a^2+(t-\g)^2} \,.
\end{equation*}
Using \eqref{zeros in interval}, we see that the  second sum is $\ll a \log \t$ 
and, for each $k$, that the sum in parentheses is
\begin{align*}
 \ll  \big(a\log \t + \Phi(\t )\big) \frac{a}{a^2+(ka)^2} 
 \ll   \frac{1}{1+ k^2} \left( \log \t + \frac{\Phi(\t )}{a} \right) \,. 
\end{align*}   
Summing our estimates, we obtain  \eqref{fund sum}.

The proof of \eqref{fund sum 2} is  similar.    

\end{proof}

Our approximation of the zeta-function by finite Euler products will 
follow almost immediately from
 
\begin{thm}\label{thm on RH}  
Assume the Riemann Hypothesis.  Let    
 $ \s \geq \frac12 + \frac{1}{\log X} $ and $|s-1|\geq \frac{1}{10}$.
Then for any $X \geq 2 $ we have   
\begin{equation}\label{zeta form 4}
\zeta(s) = P_{X}(s) e^{ R_X(s)}\,,
\end{equation}
where
\begin{equation}\label{R_X}
 R_X(s) \ll   X^{\frac12-\s }  
\left(  \Phi(\t)  +\frac{\log \t}{\log X}\right)  
+ \frac{X}{\t^2 \log^2X}   \,.
\end{equation}
Moreover, throughout the region
$\sigma \geq \frac12$, $|s-1|\geq \frac{1}{10}$, 
\begin{equation} \label{arg zeta}
\arg \zeta(\s+ i t) = - \sum_{n\leq X^2} 
  \frac{\Lambda_X(n) \sin(t \log n)}{n^{\s} \log n} 
 + O \left(  R_X(s)  \right)     \,.
 \end{equation}
\end{thm}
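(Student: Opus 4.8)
The plan is to deduce everything from Theorem~\ref{thm on zeta}. That result gives $\zeta(s)=P_X(s)Z_X(s)\bigl(1+O(X^{-\sigma-2}/(\tau^2\log^2 X))\bigr)$, and since $P_X(s)$ is an exponential and hence zero-free, one may take logarithms throughout the region in question; the factorization \eqref{zeta form 4} then holds with
\[
R_X(s)=\log Z_X(s)+O\!\Bigl(\frac{X^{-\sigma-2}}{\tau^2\log^2 X}\Bigr),\qquad
\log Z_X(s)=\sum_{\rho}F_2\bigl((s-\rho)\log X\bigr)-F_2\bigl((s-1)\log X\bigr),
\]
the second identity being \eqref{Z}. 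Thus the task is to bound the sum over zeros and the single pole term. For the pointwise size of $F_2$ I would first note, by deforming the defining contour of $E_2$ to a horizontal ray, that $|E_2(w)|\le e^{-\Re w}/|w|^2$ whenever $\Re w>0$, so that $F_2(z)=2E_2(2z)-E_2(z)\ll e^{-\Re z}/|z|^2$ for $\Re z\ge0$; for $\Re z<0$ the asymptotic expansion of $E_1$ (valid for $|\arg z|\le\tfrac{3\pi}{2}-\delta$, hence on and near the negative real axis) makes the $1/|z|$ terms in $2E_2(2z)$ and $E_2(z)$ cancel, leaving $F_2(z)\ll e^{2|\Re z|}/|z|^2$ away from $z=0$; and near $z=0$ the decomposition \eqref{F_2 formula 1} shows that $\Re F_2(z)\to-\infty$ while $\Im F_2(z)$ stays bounded.

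Next I would treat the sum over zeros. By the Riemann Hypothesis $\rho=\tfrac12+i\gamma$, so $\Re\bigl((s-\rho)\log X\bigr)=(\sigma-\tfrac12)\log X\ge1$ in the main region, and the $F_2$ bound gives $\bigl|F_2\bigl((s-\rho)\log X\bigr)\bigr|\ll X^{\frac12-\sigma}\big/\bigl(((\sigma-\frac12)^2+(t-\gamma)^2)\log^2 X\bigr)$. Summing over $\gamma$ and applying \eqref{fund sum} of Lemma~\ref{Zero Sum}, after dividing through by $\sigma-\tfrac12\,(\ge 1/\log X)$ so that the factors $((\sigma-\frac12)\log^2 X)^{-1}$ and $((\sigma-\frac12)^2\log^2 X)^{-1}$ are both $\ll1$, yields $\sum_\rho\bigl|F_2\bigl((s-\rho)\log X\bigr)\bigr|\ll X^{\frac12-\sigma}\bigl(\Phi(\tau)+\log\tau/\log X\bigr)$. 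For the pole term, $|s-1|\ge\tfrac1{10}$ forces $|s-1|\gg\tau$, while $\sigma\ge\tfrac12+\tfrac1{\log X}$ gives $e^{2|\Re((s-1)\log X)|}=X^{2(1-\sigma)}\le X$ when $\sigma<1$ and $e^{-\Re((s-1)\log X)}\le1$ when $\sigma\ge1$; hence $F_2\bigl((s-1)\log X\bigr)\ll X/(\tau^2\log^2 X)$, the bounded-$z$ correction being active only for bounded $X$, where it is absorbed into the $X^{\frac12-\sigma}\Phi(\tau)$ term already present. Combining the three contributions proves \eqref{R_X}, and with it \eqref{zeta form 4}.

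For \eqref{arg zeta} I would take imaginary parts. From \eqref{P} one has $\log P_X(s)=\sum_{n\le X^2}\Lambda_X(n)/(n^s\log n)$ and $\Im n^{-s}=-n^{-\sigma}\sin(t\log n)$, so $\Im\log P_X(s)$ equals the asserted main term, and the error is $\Im R_X(s)\ll|R_X(s)|$; for $\sigma\ge\tfrac12+\tfrac1{\log X}$ this is immediate from \eqref{zeta form 4}. To reach the full range $\sigma\ge\tfrac12$ I would not exponentiate but instead take imaginary parts directly in the pre-exponential identity \eqref{log zeta}, which holds for $\sigma\ge0$; there the zero-sum is controlled by $|\Im F_2(w)|\ll\min(1,|w|^{-2})$ for $\Re w\ge0$ (using the boundedness of $\Im F_2$ near $0$ and on the imaginary axis, since $\sigma-\tfrac12$ may now be tiny and the bound $e^{-\Re w}/|w|^2$ would be useless), and summing this over $\gamma$ with \eqref{zeros in interval} and \eqref{fund sum 2} again gives $\ll\Phi(\tau)+\log\tau/\log X\ll X^{\frac12-\sigma}\bigl(\Phi(\tau)+\log\tau/\log X\bigr)$, while the pole term is handled as before. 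One also checks that the limiting conventions for $\arg\zeta$ and for $F_2$ at a zero ordinate match.

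The hard part will be the negative-real-axis regime of $F_2$ and the matching bookkeeping for the pole term: one must verify that the crude growth $e^{2|\Re z|}$, which can be as large as $X$, lands precisely on the $X/(\tau^2\log^2 X)$ permitted by \eqref{R_X} rather than on anything larger, and that pushing \eqref{arg zeta} down to the critical line costs no more than $R_X(s)$ even though $\sum_\rho F_2((s-\rho)\log X)$ is only delicately convergent there. The estimate for the sum over zeros, by contrast, is routine once Lemma~\ref{Zero Sum} is in hand.
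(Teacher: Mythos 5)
Your argument is correct and follows essentially the same route as the paper: Theorem~\ref{thm on zeta} gives $R_X(s)=\log Z_X(s)+O(X^{-\sigma-2}/\tau^2\log^2X)$, the bound $F_2(z)\ll e^{\max(-\Re z,\,-2\Re z)}/|z|^2$ for $|z|\ge1$ controls the sum over zeros via Lemma~\ref{Zero Sum} exactly as you describe, and the pole term $F_2((s-1)\log X)\ll X^{\max(1-\sigma,\,2(1-\sigma))}/(\tau^2\log^2 X)\le X/(\tau^2\log^2 X)$ accounts for the last term of \eqref{R_X}. The one place you diverge from the paper is in pushing \eqref{arg zeta} down to $\sigma=\tfrac12$: the paper isolates the finitely many terms with $|s-\rho|\log X\le1$ and bounds their imaginary parts through the local expansion $F_2(z)=\log 4z+f_2(z)$, while you package the same information as the uniform bound $|\Im F_2(w)|\ll\min(1,|w|^{-2})$ on $\Re w\ge0$ and apply it directly to the pre-exponential identity; the near/far split you then make against \eqref{zeros in interval} and \eqref{fund sum 2} reproduces the paper's estimate, so the two treatments are equivalent in substance. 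The ``hard part'' you flag — $F_2$ near the negative real axis — in fact never bites, since under RH $\Re\bigl((s-\rho)\log X\bigr)\ge0$ throughout, and for the pole term $(s-1)\log X$ has argument bounded away from $\pm\pi$ because $|s-1|\ge\tfrac1{10}$ forces either $|\Im z|\gg|\Re z|$ or everything bounded.
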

 \begin{proof}
We estimate 
\begin{equation}\label{Z formula again} 
  Z_X(s) =   \exp \bigg(  \sum_{\rho}  
 F_2\left((s-\rho)\log X \right)  - F_2\left((s-1)\log X \right)   \bigg)   
\end{equation}
in \eqref{Z}.

Integrating \eqref{E_2 defn} by parts, we see that  
for $|z|\geq 1$  
\begin{equation}\label{E_2 formula 2}
E_2(z)  =  \frac{e^{-z}}{z^2}\left( 1 + O( |z|^{-1})  \right)  \,,
\end{equation}
and therefore that
\begin{equation}\label{F_2 formula 2}
F_2(z)  \ll    e^{\max( -\Re\,z,\, -\Re\,2z)} /  |z|^{2}  \,.
\end{equation}
Since $ \s \geq \frac12 + \frac{1}{\log X }$ and 
 the zeros are  of the form
$\rho=\frac12 + i \gamma$, they all
satisfy $|s-\rho|\log X  \geq 1$.
Thus, by \eqref{F_2 formula 2} and Lemma~\ref{Zero Sum},  
 the sum  in  \eqref{Z formula again}   is
\begin{equation}\label{inequality 1}
    \ll     \frac{1}{\log^2 X}   \sum_{|s-\rho|\log X  \geq 1}
\frac{ X^{\frac12-\s }  }{( \s-\frac12)^2 +(t-\gamma)^2  }    
    \ll    X^{\frac12-\s }   
 \left( \Phi(\t) + \frac{\log \t}{\log X }\right) \,.  
\end{equation}
Also by \eqref{F_2 formula 2},
\begin{equation}\label{F_2 of pole}
F_2\left((s-1)\log X \right)  \ll 
\frac{X^{\max \left( 1-\s,\;2(1-\s) \right)  }  }{\t^2 \log^2  X} \,.
\end{equation}
The  first assertion of the theorem follows from this and  \eqref{zeta form 1}. 

The second assertion 
follows immediately from  \eqref{zeta form 4}  if    
$\s  \geq  \frac12 + \frac{1}{\log X} $, so  
 we need only  consider the case 
$\frac12 \leq \s  < \frac12+ \frac{1}{\log X} $. The terms in the   
sum in \eqref{Z formula again} for which $ |s-\rho|\log X  \geq 1$  contribute the 
same amount as before. However,  now there may also 
be a finite number of terms for which $ |s-\rho|\log X  \leq 1$.
Using \eqref{F_2 formula 1} to estimate these, we find 
that if  $s$  is not a zero,  
 they contribute 
\begin{equation*}
\sum_{|s-\rho|\log X  \leq 1} \bigg( \log (4 (s-\rho) \log X ) + O(1)   \bigg) \,.
\end{equation*}
Since
$|\arg (s-\rho) \log X  | \leq \pi/2 $, the imaginary part of this is 
\begin{align*}
\ll     \sum_{| t-\gamma |  \leq 1/ \log X  } 1   \ 
 \ll   \frac{\log \t}{\log X} + \Phi(t) \,,
\end{align*}
by  \eqref{zeros in interval}. 
This is big-$O$ of the bound in
 \eqref{inequality 1} because $\frac12 \leq \s  < \frac12+ \frac{1}{\log X} $.
Thus, we obtain \eqref{arg zeta} provided that $t$ is not the ordinate of a zero.
 If it is, the result  follows from our convention that
 $\arg \zeta(\s +i t) = \lim_{\e \to 0^{+}}\arg \zeta(\s +i (t+\e))$.
This completes the proof of the theorem.
\end{proof}
 
We can now deduce an approximation of $\zeta(s)$ by  Euler products.

\begin{thm}\label{Approx by P} 
Assume the Riemann Hypothesis.
Let  $|s-1|\geq \frac{1}{10}$ and $ \exp(\log \t /\Phi(t)) \leq X \leq \t^2$.
Then if $ \frac12 +   \frac{ C \log \Phi(t) }{\log X} \leq \s \leq 1$ with $C >1$,
we have
\begin{equation}\label{approx by P1}
\zeta(s) = P_X(s) \left(    1 + O \left(  \Phi(t)^{1-C}  \right)  \right) \,. 
\end{equation}
If $2 \leq X < \exp(\log \t /\Phi(t))$ and $ \frac12 +   \frac{C \log\log 2\t }{\log X} 
\leq \s \leq 1$ with $C >1$, then 
\begin{equation*}\label{approx by P2}
\zeta(s) = P_X(s) \left(    1 + O \left(  (\log  \t)^{1-C}  \right)  \right) \,. 
\end{equation*}
\end{thm}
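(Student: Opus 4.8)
The plan is to read both displays off from Theorem~\ref{thm on RH}, which already supplies the exact identity $\zeta(s)=P_X(s)e^{R_X(s)}$ in the range $\s\ge\frac12+\frac1{\log X}$, $|s-1|\ge\frac1{10}$, together with the bound \eqref{R_X} for $R_X(s)$. Since $C>1$ and $\Phi(t)\to\infty$ (and $\log\log 2\t\to\infty$), the lower bounds imposed on $\s$ in the statement are contained in that range once $t$ is large, while for $t$ in a bounded set the conclusions are trivial because $\zeta$ is continuous and non\-vanishing off its pole, from which the hypothesis $|s-1|\ge\frac1{10}$ keeps us away. Thus the whole task is to show that, under the stated hypotheses on $X$ and $\s$, one has $R_X(s)\ll\Phi(t)^{1-C}$ in the first case and $R_X(s)\ll(\log\t)^{1-C}$ in the second; once this is done, $R_X(s)=o(1)$, so $e^{R_X(s)}=1+O(R_X(s))$ and the theorem follows.

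For the first display the key observation is that $X\ge\exp(\log\t/\Phi(t))$ says exactly that $\log X\ge\log\t/\Phi(t)$, hence $\log\t/\log X\le\Phi(t)$; combined with $\Phi(\t)\ll\Phi(t)$, which is \eqref{Phi ineq}, the bracketed factor in \eqref{R_X} is $\ll\Phi(t)$. On the other hand the hypothesis $\s\ge\frac12+\frac{C\log\Phi(t)}{\log X}$ gives $X^{\frac12-\s}=\exp\big((\tfrac12-\s)\log X\big)\le\exp(-C\log\Phi(t))=\Phi(t)^{-C}$, so the first term of \eqref{R_X} is $\ll\Phi(t)^{1-C}$. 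The pole contribution is cleanest to handle not through the crude bound $X/(\t^2\log^2 X)$ in \eqref{R_X} but directly from \eqref{F_2 of pole}: the same inequality for $\s$ yields $X^{2(1-\s)}\le X\,\Phi(t)^{-2C}$, whence $F_2((s-1)\log X)\ll\Phi(t)^{-2C}$ (using $X\le\t^2$), which is negligible against $\Phi(t)^{1-C}$. Hence $R_X(s)\ll\Phi(t)^{1-C}$, and exponentiating gives \eqref{approx by P1}.

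The second display is the mirror image, with the two pieces of the bracketed factor exchanged: when $X<\exp(\log\t/\Phi(t))$ one has $\log\t/\log X>\Phi(t)$, so the bracket in \eqref{R_X} is $\ll\log\t/\log X\ll\log\t$ (using $X\ge2$), while $\s\ge\frac12+\frac{C\log\log 2\t}{\log X}$ gives $X^{\frac12-\s}\le(\log 2\t)^{-C}$; thus the first term of \eqref{R_X} is $\ll(\log 2\t)^{-C}\log\t\ll(\log\t)^{1-C}$, and since $X=\t^{\,o(1)}$ in this range the pole term is $O(\t^{-1})$ and trivially absorbed. I do not anticipate a genuine obstacle here: the argument is essentially the bookkeeping of splitting the conclusion of Theorem~\ref{thm on RH} at the threshold $X=\exp(\log\t/\Phi(t))$, i.e. according to whether $\log\t/\log X$ is $\le\Phi(t)$ or $>\Phi(t)$. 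The only points needing a little care are (i) invoking \eqref{Phi ineq} to pass from $\Phi(\t)$ to $\Phi(t)$, and (ii) routing the pole term through \eqref{F_2 of pole} rather than the crude expression in \eqref{R_X}, so that it remains small even when $X$ is as large as $\t^2$ and $\s$ is close to $\frac12$; the verification that $R_X(s)=o(1)$ before exponentiating is then automatic from the bounds just obtained.
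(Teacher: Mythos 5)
Your proof is correct and follows the paper's own argument essentially verbatim: estimate $R_X(s)$ from Theorem~\ref{thm on RH} by splitting at the threshold $X=\exp(\log\t/\Phi(t))$ so that the bracket $\Phi(\t)+\log\t/\log X$ in \eqref{R_X} is $\ll\Phi(t)$ in the first case and $\ll\log\t$ in the second, use the respective lower bounds on $\s$ to bound $X^{\frac12-\s}$, treat the pole term via \eqref{F_2 of pole}, and exponentiate. The paper's proof compresses the pole term to $X^{2(\frac12-\s)}/\log^2 X$ in one step (implicitly using $X\le\t^2$ in \eqref{F_2 of pole}, exactly as you spell out), so there is no substantive difference.
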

\begin{proof} 
We estimate $R_X(s)$ in \eqref{zeta form 4}.
First assume that $ \exp(\log \t /\Phi(t)) \leq X \leq \t^2$
and $ \frac12 +   \frac{ C \log \Phi(t) }{\log X} \leq \s \leq 1$ .
Then $\log \t /\log X \leq  \Phi(t)$ and
\begin{align*}
 R_X(s) \ll  & X^{ \frac12-\s }  \Phi(t)  \notag
+ \frac{X^{ 2(\frac12-\s)}}{  \log^2X}  \\
\ll  &\Phi(t)^{1-C}+\Phi(t)^{-2C}   \notag   \\
\ll  &\Phi(t)^{1-C} \,.
\end{align*}
It follows that $\exp(R_X(s)) =1+ O(\Phi(t)^{1-C}) $, so 
we have \eqref{approx by P1}.
The second assertion follows similarly, except that this time $ \Phi(t) < \log \t/\log X$.
\end{proof}

Thus, on the Riemann Hypothesis  short Euler products
approximate   $\zeta(s)$
as long as we are not  too close to the critical line. 


We can combine the two assertions of Theorem~\ref{Approx by P} and prove a partial converse as well.

\begin{thm}\label{Approx by P2} 
Assume the Riemann Hypothesis.  Let  
$2 \leq X \leq t^2$,  $|s-1|\geq \frac{1}{10}$,
and  $ \frac12 +   \frac{ C \log\log 2\t }{\log X} \leq \s \leq 1$ with $C >1$.
Then  
\begin{equation}\label{approx by P3}
\zeta(s) = P_X(s) \left(    1 + O \left( \log^{(1-C)/2} t \right)  \right) \,. 
\end{equation} 
Conversely, if \eqref{approx by P3}   holds  for $2 \leq X \leq t^2$
in the region stated, then $\zeta(s)$ has at most a 
finite number of  zeros to the right of  
$\s= \frac12 +   \frac{ C \log\log 2\t }{\log X}$.
\end{thm}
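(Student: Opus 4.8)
The plan is to split Theorem~\ref{Approx by P2} into its two halves and treat each separately. For the direct statement \eqref{approx by P3}, I would deduce it immediately from Theorem~\ref{Approx by P}: in the range $\exp(\log\t/\Phi(t)) \leq X \leq \t^2$ the first assertion of that theorem with $\Phi(t) = \tfrac12 \log\t/\log\log 2\t$ (admissible under RH, by Goldston--Gonek) gives an error $O(\Phi(t)^{1-C})$, which is $O(\log^{(1-C)/2} t \, (\log\log 2\t)^{C-1}) $, and after adjusting $C$ slightly (or absorbing the slowly-varying factor) one gets the stated $O(\log^{(1-C)/2}t)$; for $2 \leq X < \exp(\log\t/\Phi(t))$ the second assertion of Theorem~\ref{Approx by P} gives $O((\log\t)^{1-C})$, which is even smaller than the claimed bound. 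So the first half is essentially bookkeeping to get a single clean exponent valid across the whole range $2 \leq X \leq \t^2$.

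For the converse, the idea is a contradiction argument using the fact that $P_X(s)$ is zero-free. Observe that $P_X(s) = \exp(\sum_{n \leq X^2} \Lambda_X(n) n^{-s}/\log n)$ never vanishes, since it is an exponential of a finite Dirichlet series. Suppose \eqref{approx by P3} holds throughout the region $\sigma \geq \tfrac12 + C\log\log 2\t/\log X$, $|s-1| \geq \tfrac{1}{10}$, for all $2 \leq X \leq t^2$, and suppose for contradiction that $\zeta(s)$ has infinitely many zeros $\rho_j = \beta_j + i\gamma_j$ with $\beta_j > \tfrac12 + C\log\log 2\tau_j / \log X_j$ for suitable choices $X_j \leq \gamma_j^2$. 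The point is that near such a zero the right side $P_X(s)(1+O(\log^{(1-C)/2}t))$ is bounded away from zero in modulus (once $t$ is large enough that the error term is, say, $<\tfrac12$), whereas the left side $\zeta(\rho_j) = 0$. More precisely, I would evaluate \eqref{approx by P3} at a point $s$ on the boundary curve $\sigma = \tfrac12 + C\log\log 2\t/\log X$ and invoke a lower bound for $|P_X(s)|$ there — this is where the real work lies — to show $|\zeta(s)|$ is bounded below, forcing any hypothetical zero to lie strictly to the \emph{left} of that curve. Since $t$ ranges over an unbounded set, only finitely many zeros (those with small ordinate, where the error term is not yet controlled) can escape, giving the conclusion.

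The main obstacle is the lower bound for $|P_X(s)|$ on the boundary curve: a priori $\sum_{n \leq X^2}\Lambda_X(n) n^{-\sigma-it}/\log n$ could have large negative real part, making $|P_X(s)|$ tiny, in which case the multiplicative error $1+O(\log^{(1-C)/2}t)$ in \eqref{approx by P3} does not prevent $\zeta$ from vanishing. The resolution is that on the critical curve $\sigma = \tfrac12 + C\log\log 2\t/\log X$ the sum is $O(\log^{1-?}\t)$-controlled: estimating $|\sum_{n \leq X^2}\Lambda_X(n) n^{-\sigma}/\log n| \leq \sum_{n\leq X^2}\Lambda(n) n^{-\sigma}/\log n \ll X^{1-\sigma}/((\sigma-\tfrac12)\log X) \ll \log^{1-C}\t \cdot (\text{power of }\log\log)$, which tends to $0$; hence $\Re \log P_X(s) = O(1)$ and indeed $\to 0$, so $|P_X(s)| = 1 + o(1)$ uniformly on the curve. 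With this in hand the contradiction is immediate: on the curve $|\zeta(s)| = |P_X(s)|(1+o(1)) \to 1$, so $\zeta$ cannot vanish on it for $t$ large, and by the maximum-modulus/open-mapping structure (or simply by applying the same estimate on a neighborhood to the right) $\zeta$ has no zeros with $\beta > \tfrac12 + C\log\log 2\beta^{-1}\cdots$; only the finitely many zeros of bounded height remain unaccounted for. I would present the argument by fixing, for each candidate zero $\rho$, the parameter $X = X(\rho)$ so that $\rho$ lies exactly on (or just to the right of) the corresponding boundary curve, and then deriving $0 = |\zeta(\rho)| = |P_X(\rho)|(1+o(1)) \geq \tfrac12$, a contradiction for all but finitely many $\rho$.
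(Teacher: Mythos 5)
Your treatment of the direct assertion matches the paper: combine the two cases of Theorem~\ref{Approx by P}, taking $\Phi(t)=\tfrac12\log\t/\log\log 2\t$ (admissible under RH) and noting $\Phi(t)^{1-C}\asymp(\log\t)^{1-C}(\log\log 2\t)^{C-1}\ll(\log\t)^{(1-C)/2}$ because $(1-C)/2<0$ and $\log\t$ outgrows every power of $\log\log\t$. No adjustment of $C$ is actually needed (your intermediate exponent $(1-C)/2$ on $\log\t$ should read $1-C$, but the conclusion stands), and the second case of Theorem~\ref{Approx by P} gives the even stronger bound $(\log\t)^{1-C}$, so the estimate holds across all of $2\le X\le\t^2$.

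Your converse, however, rests on a manufactured obstacle and then on a false estimate. You worry that a tiny $|P_X(s)|$ would let $\zeta$ vanish within the multiplicative error. That is not so: $P_X(s)=\exp\big(\sum_{n\le X^2}\Lambda_X(n)n^{-s}/\log n\big)$ is \emph{never} zero, so if $\zeta(\rho)=0$ at a point $\rho$ in the region, the identity $\zeta(\rho)=P_X(\rho)(1+E(\rho))$ forces $1+E(\rho)=0$, i.e.\ $|E(\rho)|=1$ exactly, no matter how small $|P_X(\rho)|$ is. Comparing with $|E(\rho)|\le B\log^{(1-C)/2}\t$ yields $\t\le\exp(B^{2/(C-1)})$, which already finishes the converse and is precisely the paper's one-line argument. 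The ``resolution'' you supply --- that $\Re\log P_X(s)\to 0$ on the boundary curve, hence $|P_X(s)|=1+o(1)$ there --- is wrong. One has $\sum_{n\le X^2}\Lambda_X(n)n^{-\s}/\log n\ge\sum_{p\le X}p^{-\s}\asymp X^{1-\s}/\big((1-\s)\log X\big)$, and on $\s=\tfrac12+C\log\log 2\t/\log X$ this is $\asymp X^{1/2}(\log 2\t)^{-C}/\log X$, which grows without bound once $X$ is a power of $t$; accordingly $|P_X(\s+it)|$ can be exponentially large or exponentially small there. The bound $X^{1-\s}/\big((\s-\tfrac12)\log X\big)$ you wrote does not follow from any correct estimate of the prime sum. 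Drop the lower bound on $|P_X|$ entirely; the only input the converse requires is $P_X\ne 0$.
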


\begin{rem*} 
The condition  
$\s \geq  \frac12 +   \frac{ C \log\log 2\t }{\log X}$
implies a lower bound for $X$ that grows with $t$, namely,
$$
X \geq (\log \t)^{C/(\s-\frac12)}\,.
$$
\end{rem*}

The converse follows from the observation that if \eqref{approx by P3}
holds, then there is a constant $B>0$ such that 
$$
\left|  \zeta(s)P_X(s)^{-1}  -  1  \right|\leq B \log^{(1-C)/2} \t \,. 
$$
If $\zeta(\b+i\g)=0$ with $\b >\frac12 +   \frac{ C \log\log (2|\g| +2)}{\log X}$, this forces 
$\g \leq \exp(B^{2/(C-1)})$ and the result follows.  

As in the case of  approximations  by short sums,
one can also ask  whether   short products    approximate $\zeta(s)$ 
well when $0 < \s \leq \frac12$.  For sums we saw that the answer is no
unless they are of length at least $t$. For products the answer is no  
no matter how long they are. 
A quick way to see this is by counting zeros of 
$\zeta(s)$ and of $P_X(s)$ in a rectangle containing the segment 
$[\frac12 , \frac12+    iT].$
The former has  $\sim (T/2\pi) \log T$ zeros, the latter none. 
This would be impossible if $\zeta(s) =P_X(s)(1+o(1))$ 
 in the rectangle.

One can also argue as follows when $\s$ is strictly less than $\frac12$.
(A modification of the argument works for $\s=\frac12$ too.)
Suppose that   
$\zeta(s) = P_X(s) (1+ o(1))$ in the strip $0 < \s \leq \frac12$. 
Then   $\log |\zeta(s)| = \log |P_X(s)| + o(1)$ and we have
 \begin{equation}\label{int zeta P1}
 \int_{0}^{T} \left( \log |\zeta(\s + i t) |  \right)^2  dt  \sim 
  \int_{0}^{T} \left( \log |P_X(\s + i t) |  \right)^2  dt  
\end{equation}
for   $\s$ fixed and $T \to \infty$.
By the functional equation for the zeta-function,
\begin{align*}
\log |\zeta(\s+it)| = (\tfrac12-\s)\log\frac{\t}{2\pi} + 
\log  |\zeta(1-\s - it)| + o(1)\,.
\end{align*} 
Now the mean-square of the three terms on the right-hand side are 
  $\sim (\tfrac12-\s)^2 T\log^2 T , \; \sim c_0 T$ and \; $o(T)$, respectively.
Thus, 
\begin{equation}\label{int zeta P2}
 \int_{0}^{T} \left( \log |\zeta(\s + i t) | \right)^2    dt 
\sim (\tfrac12-\s)^2 T\log^2 T \,.
\end{equation}
On the other hand, by the mean value theorem for Dirichlet polynomials, 
 if $X =o(T^{\frac12})$,   the right-hand side 
of \eqref{int zeta P1} is
\begin{align*}
  \int_{0}^{T} \left( \log |P_X(\s + i t) |  \right)^2  dt  
  \sim & \int_{0}^{T} 
\left( \sum_{n\leq X^2} \frac{\Lambda_{X}(n)
\cos(t\log n)}{n^{\s } \log n}  \right)^2 dt     \\
 &\sim \frac{T}{2} \sum_{n\leq X^2} 
 \frac{\Lambda^{2}_{X}(n) }{ n^{ 2\s  } \log^2 n}    \\
& \sim c \,T \frac{X^{2-4\s}}{ \log X  } \,,
\end{align*}
where $c$ is a positive constant. 
Comparing this with \eqref{int zeta P2}, we see
that  \eqref{int zeta P1} cannot hold if $0\leq \s < \frac12$
and  $X$ is larger than a certain power of $\log T$.
Note also that for
 infinitely many $t$ tending to infinity, $P_X(s)$ can be quite large, namely    
\begin{equation}\label{P is big}
|P_X(\s + i t) | \gg \exp{(X^{1-2\s}/\sqrt{\log X} ) }\,.
\end{equation}

In this section we have seen  that short truncations of its Euler product 
approximate $\zeta(s)$ well  in the region $\s>1$, $|s-1|>\frac{1}{10}$. 
We also showed that this remains true in the right-half of the critical strip
if the Riemann Hypothesis is true and if we are not too near the critical line 
(and use a weighted Euler product). 
However,  to the left of the critical line  the Euler product is not a good
approximation of $\zeta(s)$ regardless of  how long  it is.


\section{Products, sums, and moments} 

Our purpose in this section is to deduce two consequences of 
the results of the previous  section.  First we require a result whose
proof is given in the Appendix.   
\begin{thm}\label{lem on RH 1}
Assume the  Riemann Hypothesis.
Let $\s\geq \frac12$ be bounded, 
 $|s-1|> \frac{1}{10}$,  and $2\leq X \leq t^2$.
Then there is a positive constant $C_1$ such that
\begin{equation}\label{Riemann approx 4}
 \zeta(s) = \sum_{n \leq X} n^{-s}   
+  O\left( X^{\frac12 - \sigma } e^{C_1 \Phi(t)} \right)   \,.
 \end{equation}
\end{thm}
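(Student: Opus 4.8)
The plan is to derive the approximation from the hybrid factorization $\zeta(s)=P_X(s)Z_X(s)(1+o(1))$ of Theorem~\ref{thm on zeta}, or more conveniently from the cleaner form $\zeta(s)=P_X(s)e^{R_X(s)}$ of Theorem~\ref{thm on RH}, together with an approximation of $\zeta(s)$ by a Dirichlet polynomial in the right-half strip. The idea is: $P_X(s)$ is essentially $\exp$ of a short prime sum, and expanding that exponential reproduces the truncated Dirichlet series $\sum_{n\le X} n^{-s}$ up to a controllable error, while $e^{R_X(s)}=1+O(R_X(s))$ by Theorem~\ref{thm on RH}. One starts from $\sigma\ge\frac12+\frac1{\log X}$ (where Theorem~\ref{thm on RH} applies directly) and then removes the small gap near the critical line by a Lindel\"of-type convexity/Phragm\'en--Lindel\"of argument, using the admissible bound $|\zeta(\frac12+it)|\ll e^{\Phi(t)}$.

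The key steps, in order. First, recall from Theorem~\ref{thm on RH} that for $\sigma\ge\frac12+\frac1{\log X}$, $|s-1|\ge\frac1{10}$ one has $\zeta(s)=P_X(s)e^{R_X(s)}$ with $R_X(s)\ll X^{1/2-\sigma}(\Phi(\tau)+\log\tau/\log X)+X/(\tau^2\log^2 X)\ll X^{1/2-\sigma}e^{C\Phi(t)}$ in the relevant range. Second, write $P_X(s)=\exp\big(\sum_{n\le X^2}\Lambda_X(n)n^{-s}/\log n\big)$ and expand the exponential as a Dirichlet series $\sum_{k} a_X(k)k^{-s}$; because the weights $\Lambda_X(n)$ are supported on prime powers $n\le X^2$, the coefficient $a_X(k)$ equals the usual divisor-type coefficient for $k\le X$ (in fact $a_X(k)=1$ for... one must check that $\exp$ of the weighted von Mangoldt sum reproduces $\sum_{n\le X}n^{-s}$ up to the stated error; the cleanest route is to compare $P_X(s)$ with $\prod_{p\le X}(1-p^{-s})^{-1}$ and bound the difference, then truncate the Euler product to the Dirichlet series $\sum_{n\le X}n^{-s}$ with tail error $O(X^{1/2-\sigma}\tau^\epsilon)$ using the classical zero-free-region-free estimate valid under RH). Third, combine: $\zeta(s)=\sum_{n\le X}n^{-s}+O(X^{1/2-\sigma}e^{C_1\Phi(t)})$ for $\sigma\ge\frac12+\frac1{\log X}$. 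Fourth, extend down to $\sigma\ge\frac12$: the function $g(s)=\zeta(s)-\sum_{n\le X}n^{-s}$ is analytic in the strip (away from $s=1$), is $O(X^{1/2-\sigma}e^{C_1\Phi(t)})$ on the line $\sigma=\frac12+\frac1{\log X}$, and on $\sigma=\frac12$ one has $\zeta\ll e^{\Phi(t)}$ while $\sum_{n\le X}n^{-s}\ll\log X\ll e^{C\Phi(t)}$ (using $\exp(\log\tau/\Phi(t))\le X$ implicitly, or handling small $X$ trivially); a Phragm\'en--Lindel\"of / three-lines argument across this thin strip of width $1/\log X$ then yields the bound $O(X^{1/2-\sigma}e^{C_1\Phi(t)})$ throughout $\frac12\le\sigma$, after adjusting $C_1$.

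The main obstacle I expect is the bookkeeping in Step~2: passing from the \emph{weighted} exponential $P_X(s)$ (with the smooth cutoff $\Lambda_X$ supported up to $X^2$) back to the \emph{sharp} truncated Dirichlet series $\sum_{n\le X}n^{-s}$, while keeping the error at the clean size $X^{1/2-\sigma}e^{C_1\Phi(t)}$ and not $X^{1/2-\sigma}$ times something larger. One must be careful that expanding $\exp$ of a sum up to $X^2$ does not introduce terms $n^{-s}$ with $n$ as large as a power of $X$ contributing more than $O(X^{1/2-\sigma}\tau^\epsilon)$; controlling these requires either a contour-integral (Perron) argument for the truncated Euler product, or a direct estimate of $\prod_{p\le X}(1-p^{-s})^{-1}-\sum_{n\le X}n^{-s}$ together with $P_X(s)/\prod_{p\le X}(1-p^{-s})^{-1}=1+O(X^{1-2\sigma}/\log X)$ from the definition of $\Lambda_X$. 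The extension step (Step~4) is routine once the main-line estimate is in hand, since $\Phi$ is monotone and $e^{\Phi}$ grows slower than any power of $\tau$, so the thin-strip convexity argument loses at most a bounded factor in the exponent of $\Phi$.
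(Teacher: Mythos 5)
Your proposal takes a genuinely different route from the paper, and unfortunately both of its non-routine steps have gaps that I do not think can be closed along the lines you sketch.

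The paper's proof is a direct Perron/contour-shift argument: starting from Perron's formula
\[
\sum_{n \leq X}\frac{1}{n^s} = \frac{1}{2\pi i} \int_{c-iU}^{c+iU}\zeta(s+w)\frac{X^w}{w}\,\d w + O\!\left(\tfrac{X^{1/2}\log 2X}{U}\right) + O(X^{-\sigma}),\qquad c=\tfrac12+\tfrac1{\log X},
\]
one moves the contour left to $\Re w = b := \tfrac12 - \sigma - \tfrac1{\log \t}$ (so that $\Re(s+w)$ sits just left of $\tfrac12$), picks up the residues $\zeta(s) + X^{1-s}/(1-s)$, and bounds the three remaining sides using the RH bound $|\zeta(\tfrac12 + \tfrac1{\log\t}+i v)| \ll e^{\Phi(\cdot)}$ together with the functional equation. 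This inserts the strength of RH exactly where it is needed, at the critical line, inside the integral. Your proposal instead tries to first establish the approximation on $\sigma \geq \tfrac12 + \tfrac1{\log X}$ (by expanding $P_X(s)$ and invoking Theorem~\ref{thm on RH}) and then close the thin remaining strip by a Phragm\'en--Lindel\"of argument.

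The first problem is your Step~2. You need $P_X(s) - \sum_{n\leq X} n^{-s} \ll X^{1/2-\sigma}e^{C\Phi(t)}$, which is essentially Theorem~\ref{P=Sum} of the paper. In the paper, Theorem~\ref{P=Sum} is \emph{derived from} the statement you are trying to prove, together with Theorem~\ref{thm on RH}; you are attempting to run the logic in reverse. No direct comparison of $P_X(s)$ with $\sum_{n\leq X} n^{-s}$ is available for $\sigma$ near $\tfrac12$: term-by-term the tail of the Dirichlet expansion of $P_X(s)$ has no usable absolute convergence there, the hard-cutoff Euler product $\prod_{p\leq X}(1-p^{-s})^{-1}$ you suggest comparing to can be as large as $\exp(X^{1-2\sigma}/\sqrt{\log X})$ near $\sigma = \tfrac12$ (cf.\ \eqref{P is big}), and the known route to this comparison passes through $\zeta(s)$ and hence through the very theorem in question.

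The second and more decisive problem is Step~4. Your Phragm\'en--Lindel\"of argument requires a boundary bound for $g(s)=\zeta(s)-\sum_{n\leq X}n^{-s}$ \emph{on} $\sigma=\tfrac12$. You claim $\sum_{n\leq X} n^{-\frac12-it}\ll \log X$, but this is false: the trivial bound on the critical line is $\sum_{n\leq X}n^{-1/2}\asymp X^{1/2}$, and for $X$ as large as $t^2$ this is $\asymp t$, far larger than $e^{C\Phi(t)}$. (A bound of size $e^{C\Phi(t)}$ for the partial sum on $\sigma=\tfrac12$ is precisely a corollary of the theorem being proved; assuming it is circular.) With boundary bounds $X^{1/2}$ at $\sigma=\tfrac12$ and $e^{C\Phi(t)}$ at $\sigma=\tfrac12+\tfrac1{\log X}$, the three-lines inequality only reproduces $X^{1/2}$ at $\sigma=\tfrac12$ --- convexity interpolates between boundary values, it cannot improve them. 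So the thin-strip convexity argument cannot bridge the gap; the RH bound must be injected \emph{inside} the strip, which is exactly what the paper's contour-integral argument does.
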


If  $\frac12 + 2C_1\frac{\Phi(t)}{\log X}\leq \s \leq 1$, 
the  error term  in Lemma~\ref{lem on RH 1} is $O(e^{-C_1\Phi(t)})$. 
For the same $\s$-range, the approximation of $\zeta(s)$ given by 
Theorem~\ref{thm on RH}   is
\begin{equation}\label{zeta aprox }
\zeta(s) = P_X(s) \left(    1 + O \left(e^{-(2-\e)C_1\Phi(t)}  \right)  \right)  ,
\end{equation}
where $\e$ is arbitrarily  small.
Thus, equating respective sides of \eqref{Riemann approx 4} and
\eqref{zeta aprox } and solving for $P_X(s),$ we see that
\begin{align*}\label{ }
 P_X(s) =   \left( \sum_{n \leq X} \frac{1}{n^s}   
+  O\left( e^{-2C_1 \Phi(t)} \right) \right)
   \bigg(    1 + O \left(e^{-(2-\e)C_1\Phi(t)}  \right)  \bigg) \,.
\end{align*}
By the corollary to Theorem~\ref{lem on RH 1}  (see the Appendix), the sum 
here is $\ll e^{C_1\Phi(t)}$, so we obtain
\begin{align*} 
 P_X(s) =     \sum_{n \leq X} \frac{1}{n^s}   
+  O\left( e^{-(1-\e)C_1 \Phi(t)} \right)  \,.
\end{align*}
We have now proved

\begin{thm}\label{P=Sum}  
Assume the  Riemann Hypothesis.
Let $\s\geq \frac12$ be bounded, 
 $|s-1|> \frac{1}{10}$,  and $2\leq X \leq t^2$.
There is a positive constant $C_1$ such that if
$  \frac12 + \frac{2C_1\Phi(t)}{\log X } \leq \s \leq 1$, then
 \begin{equation*}\label{approx of P and Sum}
\sum_{n \leq X} \frac{1}{n^s}  
 =  P_X(s)   +  O \left(e^{-C_2\Phi(t)}  \right)  \,,
\end{equation*}
for any positive constant $C_2$ less than $ C_1$.
\end{thm}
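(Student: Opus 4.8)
The plan is to make the computation sketched just before the statement precise. The two inputs are Theorem~\ref{lem on RH 1}, which under RH approximates $\zeta(s)$ by the Dirichlet sum $\sum_{n\le X}n^{-s}$, and Theorem~\ref{thm on RH}, which approximates $\zeta(s)$ by the Euler product $P_X(s)$; both hold for $2\le X\le t^2$, $|s-1|>\tfrac1{10}$, and we work throughout in the range $\tfrac12+\tfrac{2C_1\Phi(t)}{\log X}\le\s\le1$. \emph{Step 1 (sum side).} In this range $X^{\frac12-\s}=e^{-(\s-\frac12)\log X}\le e^{-2C_1\Phi(t)}$, so the error term $X^{\frac12-\s}e^{C_1\Phi(t)}$ in \eqref{Riemann approx 4} is $\ll e^{-C_1\Phi(t)}$, whence $\sum_{n\le X}n^{-s}=\zeta(s)+O\!\left(e^{-C_1\Phi(t)}\right).$

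\emph{Step 2 (product side).} I would estimate $R_X(s)$ of \eqref{R_X} in the same range. The factor $X^{\frac12-\s}\le e^{-2C_1\Phi(t)}$ supplies the exponential decay in $\Phi(t)$, while the companion factors $\Phi(\t)+\log\t/\log X$ (and, once $X\le\t^2$ is used, the pole term as well) grow at most like a fixed power of $\log\t$ and so are $\ll e^{\e\Phi(t)}$ for every $\e>0$; here one invokes admissibility and \eqref{Phi ineq}. This gives $R_X(s)\ll e^{-(2-\e)C_1\Phi(t)}$, so by \eqref{zeta form 4},
\[
 \zeta(s)=P_X(s)\left(1+O\!\left(e^{-(2-\e)C_1\Phi(t)}\right)\right).
\]

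\emph{Step 3 (combine and finish).} To pass from this relative error to an absolute one I need a crude bound for $|P_X(s)|$: the corollary to Theorem~\ref{lem on RH 1} (proved in the Appendix) gives $\sum_{n\le X}n^{-s}\ll e^{C_1\Phi(t)}$, so by Step~1 $\zeta(s)\ll e^{C_1\Phi(t)}$, and then by Step~2 $P_X(s)\ll e^{C_1\Phi(t)}$. Feeding this into Step~2,
\[
 \zeta(s)=P_X(s)+O\!\left(e^{C_1\Phi(t)}e^{-(2-\e)C_1\Phi(t)}\right)=P_X(s)+O\!\left(e^{-(1-\e)C_1\Phi(t)}\right),
\]
and subtracting this from Step~1 yields $\sum_{n\le X}n^{-s}=P_X(s)+O\!\left(e^{-(1-\e)C_1\Phi(t)}\right)$. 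Given $C_2<C_1$, choose $\e>0$ with $(1-\e)C_1\ge C_2$; this is the assertion.

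The step demanding actual care is Step~2 — showing that $R_X(s)$ is genuinely exponentially small in $\Phi(t)$ across the whole strip $\tfrac12+\tfrac{2C_1\Phi(t)}{\log X}\le\s\le1$. The delicate point is the contribution of the pole: the form of $R_X(s)$ recorded in \eqref{R_X} bounds it by $X/(\t^2\log^2X)$, which by itself does not decay in $\Phi(t)$, so one should retain the sharper estimate \eqref{F_2 of pole}, which for $\tfrac12\le\s\le1$ reads $F_2((s-1)\log X)\ll X^{2(1-\s)}/(\t^2\log^2X)$. In our range $X^{2(1-\s)}=X^{\,1-2(\s-\frac12)}\le Xe^{-4C_1\Phi(t)}\le\t^2 e^{-4C_1\Phi(t)}$, so the pole term is $\ll e^{-4C_1\Phi(t)}/\log^2X$, comfortably inside the bound claimed in Step~2. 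Once that is settled, Steps~1 and~3 are routine substitution and constant-chasing.
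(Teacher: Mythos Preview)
Your proposal is correct and follows essentially the same route as the paper: combine the Dirichlet-sum approximation \eqref{Riemann approx 4} and the Euler-product approximation from Theorem~\ref{thm on RH} in the range $\s\ge\tfrac12+\tfrac{2C_1\Phi(t)}{\log X}$, then use the Corollary in the Appendix to bound the sum (equivalently $P_X$) by $e^{C_1\Phi(t)}$ and convert the relative error into an absolute one. Your additional care with the pole contribution---replacing the crude bound $X/(\t^2\log^2X)$ from \eqref{R_X} by the sharper $X^{2(1-\s)}/(\t^2\log^2X)$ from \eqref{F_2 of pole}---is a point the paper passes over when it simply asserts \eqref{zeta aprox }, and your observation that $X^{2(1-\s)}\le\t^2e^{-4C_1\Phi(t)}$ is exactly what is needed there.
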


Our second   observation is that one can use these appoximations
to calculate the moments of a very long Euler product.
Suppose one wished to compute  the moments
$$
\int_{0}^{T} |P_{X}(\sigma+i t)|^{2k} \;d t\,,
$$
when $\frac12<\s<1$. The standard method would be to write 
$P_{X}(s)^{k}$ as a Dirichlet series and use a mean value theorem
for such polynomials to compute the mean modulus squared. But this 
only works well when the product does not have  many factors. For example, 
for a  slightly different Euler  product, Gonek, Hughes and Keating~\cite{GHK} 
have proved the  
 
\begin{thm*}\label{thm:Q moments}
Let  $1/2 < c <1$, $\epsilon>0$,
and let $k$ be any positive real number.
Suppose that $X$ and $T \to \infty$ and $X = O\left((\log
T)^{1/(1-c+\epsilon)}\right)$. Then we have
\begin{align*}
 \int_T^{2T}  \left| \exp \left(\sum_{n\leq X }  
 \frac{\Lambda(n)}{ n^{\s+it}  \log n }
  \right) \right|^{2k} \;  d t  
  \sim     a_k(\s)\, T 
\zeta(2\sigma)^{k^2} e^{-k^2 E_1\left(  (2\sigma-1)\log X  \right)} \,,
\end{align*}
where
$$
a_k(\s) =     \prod_{p } \left\{\left(1-\frac{1}{p^{2\sigma}}\right)^{k^2}
    \sum_{m=0}^{\infty}\frac{d_k(p^m)^{2}}{p^{2m\sigma}} \right\}
$$
uniformly for $c \leq \sigma\leq 1$.
Here
  $d_k(n)$ is the $k$th divisor function and 
  $E_1(z)=\int_{z}^{\infty}\frac{e^{-w}}{w} dw$ is the first exponential integral.
 \end{thm*}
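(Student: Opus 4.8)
Write $Q_X(s)=\exp\!\big(\sum_{n\le X}\Lambda(n)/(n^{s}\log n)\big)$ and $s=\sigma+it$. Since the exponent equals $\sum_{p^{j}\le X}\tfrac1j\,p^{-js}$, the function $Q_X(s)^{k}$ has, for $\sigma>0$, an absolutely convergent Dirichlet series expansion $Q_X(s)^{k}=\sum_{n\ge1}\alpha_k(n)\,n^{-s}$ with $\alpha_k$ multiplicative, $\alpha_k(n)=0$ unless $n$ is $X$-smooth, and
\begin{equation*}
\alpha_k(p^{m})=[u^{m}]\exp\!\Big(k\sum_{1\le j\le J_p}\frac{u^{j}}{j}\Big),\qquad J_p=\Big\lfloor\frac{\log X}{\log p}\Big\rfloor .
\end{equation*}
Two elementary facts drive the proof: since $[u^{m}]$ of a power series depends only on the first $m$ coefficients of its exponent, $\alpha_k(p^{m})=d_k(p^{m})$ whenever $p^{m}\le X$; and since $(1-u)^{-k}=\exp\!\big(k\sum_{j\le J_p}u^{j}/j\big)\exp\!\big(k\sum_{j>J_p}u^{j}/j\big)$ is a product of power series with non-negative coefficients when $k>0$, one has $0\le\alpha_k(p^{m})\le d_k(p^{m})$ for all $m$, hence $0\le\alpha_k(n)\le d_k(n)$ for all $n$.

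The plan is then the classical mean value computation. Expanding $|Q_X(\sigma+it)|^{2k}=\sum_{m,n}\alpha_k(m)\alpha_k(n)(mn)^{-\sigma}(n/m)^{it}$, truncating both Dirichlet series at a fixed power of $T$ (the tails handled via $0\le\alpha_k\le d_k$ and the rarity of large $X$-smooth integers), integrating over $[T,2T]$, and extending the truncated diagonal back to an infinite sum at a cost of $o(T)$, the diagonal $m=n$ contributes
\begin{equation*}
T\sum_{n}\frac{\alpha_k(n)^{2}}{n^{2\sigma}}=T\prod_{p\le X}\Big(\sum_{m\ge0}\frac{\alpha_k(p^{m})^{2}}{p^{2m\sigma}}\Big),
\end{equation*}
while the off-diagonal is $\ll\sum_{m\ne n}\alpha_k(m)\alpha_k(n)(mn)^{-\sigma}/|\log(n/m)|$, which one splits into the ranges $n>2m$ and $m<n\le2m$.

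It remains to evaluate the diagonal. By the first fact above, the local factor $\sum_{m}\alpha_k(p^{m})^{2}p^{-2m\sigma}$ agrees with $\sum_{m}d_k(p^{m})^{2}p^{-2m\sigma}$ except in the terms with $p^{m}>X$, which occur only for $p>X^{1/2}$ and contribute a total $O(X^{1/2-2\sigma+\epsilon})$ over all $p\le X$; hence
\begin{equation*}
\prod_{p\le X}\Big(\sum_{m\ge0}\frac{\alpha_k(p^{m})^{2}}{p^{2m\sigma}}\Big)=\big(1+o(1)\big)\prod_{p}\Big(\sum_{m\ge0}\frac{d_k(p^{m})^{2}}{p^{2m\sigma}}\Big)\prod_{p>X}\Big(\sum_{m\ge0}\frac{d_k(p^{m})^{2}}{p^{2m\sigma}}\Big)^{-1}.
\end{equation*}
The first product equals $\sum_{n}d_k(n)^{2}n^{-2\sigma}=\zeta(2\sigma)^{k^{2}}a_k(\sigma)$, the standard Euler-product identity (with $a_k(\sigma)=\prod_p(1-p^{-2\sigma})^{k^{2}}\sum_m d_k(p^m)^{2}p^{-2m\sigma}$ convergent for $\sigma>\tfrac14$). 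For the tail, $\log\sum_{m}d_k(p^{m})^{2}p^{-2m\sigma}=k^{2}p^{-2\sigma}+O(p^{-4\sigma})$, so its reciprocal over $p>X$ is $\exp\!\big(-k^{2}\sum_{p>X}p^{-2\sigma}+O(X^{1/2-2\sigma+\epsilon})\big)$, and by the prime number theorem $\sum_{p>X}p^{-2\sigma}=\int_{X}^{\infty}u^{-2\sigma}\,d\pi(u)=E_1\big((2\sigma-1)\log X\big)+o(1)$ after the substitutions $u=e^{v}$, $w=(2\sigma-1)v$. Assembling these gives the claimed asymptotic, uniformly in $c\le\sigma\le1$.

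The main obstacle is the off-diagonal estimate. In the range $n>2m$ one has $|\log(n/m)|\gg1$, so this part is $\ll\big(\sum_{n}\alpha_k(n)n^{-\sigma}\big)^{2}=Q_X(\sigma)^{2k}=\exp\!\big(2k\sum_{n\le X}\Lambda(n)/(n^{\sigma}\log n)\big)$; demanding this be $o(T)$ uniformly for $c\le\sigma\le1$ --- the binding case being $\sigma=c$, where the exponent is $\asymp X^{1-c}/\log^{2}X$ --- is precisely what forces $X=O\big((\log T)^{1/(1-c+\epsilon)}\big)$. The complementary range $m<n\le2m$ needs the more delicate fact that consecutive $X$-smooth integers are sparse, so that after writing $n=m+h$ the sum over admissible $(m,h)$ is small; with these two ingredients the remaining estimates are routine.
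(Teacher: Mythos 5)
This theorem is \emph{quoted} in the paper from Gonek, Hughes and Keating \cite{GHK}; the paper itself gives no proof, so there is no ``paper's proof'' to compare against. That said, your sketch has the right shape and almost certainly tracks the actual argument in \cite{GHK}: expand $Q_X(s)^k=\sum_n\alpha_k(n)n^{-s}$ with $\alpha_k$ multiplicative and supported on $X$-smooth $n$; exploit $\alpha_k(p^m)=d_k(p^m)$ for $p^m\le X$ and $0\le\alpha_k(p^m)\le d_k(p^m)$; truncate, integrate, isolate the diagonal; and evaluate the diagonal via the Euler factorization $\sum_n d_k(n)^2 n^{-2\sigma}=\zeta(2\sigma)^{k^2}a_k(\sigma)$ together with the substitution $w=(2\sigma-1)\log u$ in $\sum_{p>X}p^{-2\sigma}$, which produces $E_1\big((2\sigma-1)\log X\big)$. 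All of these pieces are correct and in the right place, including the observation that $\alpha_k(p^m)$ depends only on the first $m$ coefficients of the exponent, and the bound $O(X^{1/2-2\sigma+\epsilon})$ for the discrepancy between $\prod_{p\le X}\sum_m\alpha_k(p^m)^2p^{-2m\sigma}$ and $\prod_{p\le X}\sum_m d_k(p^m)^2p^{-2m\sigma}$.

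Two caveats. First, a small slip: $\sum_{n\le X}\Lambda(n)/(n^{c}\log n)\sim\sum_{p\le X}p^{-c}\asymp X^{1-c}/\big((1-c)\log X\big)$, so the exponent in $Q_X(c)^{2k}$ is $\asymp X^{1-c}/\log X$, not $X^{1-c}/\log^{2}X$; the conclusion ($X^{1-c}\ll\log T\log X$, compatible with $X=O((\log T)^{1/(1-c+\epsilon)})$) is unaffected. Second, and more substantively, the close off-diagonal $m<n\le2m$ is not ``routine'' --- it is where the real work lies. Since $\alpha_k(n)\ll n^{\epsilon}$, the naive estimate $\sum_{m\le Y}\alpha_k(m)m^{1-2\sigma+\epsilon}\sum_{0<h\le m}h^{-1}$ is a positive power of $T$ for $\sigma\le1$, so one genuinely needs to feed in the Dickman-type sparsity $\rho(\log m/\log X)$ of $X$-smooth integers near $m$, and to split off the range $m\le X^{B}$ where that sparsity is absent. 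Relatedly, one cannot simply invoke the Montgomery--Vaughan mean value theorem after truncation: its error $\sum_{n\le Y}\alpha_k(n)^2 n^{1-2\sigma}$ can be as large as $\exp\big(k^2 X^{2-2c}/((2-2c)\log X)\big)$, which exceeds $T$ when $X\asymp(\log T)^{1/(1-c+\epsilon)}$ with $\epsilon$ small. Your instinct to control the off-diagonal by hand rather than black-boxing a mean value theorem is therefore the right one, but the $m<n\le2m$ range deserves more than one sentence.
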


Note that here the number of factors  in  the Euler product is not even $\log^2T$.   
On the other hand, if we assume the Riemann Hypothesis, 
that  $ \frac12 +  \frac{ C\log\log 2T }{\log X} \leq \s <1$ with $C>1$,
and that $ 2 \leq X \leq T^2$, then by Theorem~\ref{Approx by P}  
\begin{equation*}
\zeta(s) = P_X(s)\big(1 + o(1)\big)\,.
\end{equation*}
Hence,
\begin{equation*}
\int_{T}^{2T} |P_{X}(\sigma+i t)|^{2k} \;d t \sim
\int_{T}^{2T} |\zeta(\sigma+i t)|^{2k} \;d t \,.
\end{equation*}
Now, it is a consequence of the
Lindel\"{o}f Hypothesis (Titchmarsh~\cite{T}, Theorem  13.2), 
and so also of the Riemann Hypothesis,
that when $\frac12<\s<1$ is fixed,   
$$
\int_{T}^{2T} |\zeta(\sigma+i t)|^{2k} \;d t 
\sim T \sum_{n=1}^{\infty} \frac{d_{k}^{2}(n)}{n^{2\s}}  
$$
for any fixed positive integer $k$.  
Thus, for such $\s$ and $k$ we have
\begin{equation*}
\int_{T}^{2T} |P_{X}(\sigma+i t)|^{2k} \;d t 
\sim T \sum_{n=1}^{\infty} \frac{d_{k}^{2}(n)}{n^{2\s}} \,.
\end{equation*}
This gives an estimation of  the moments of  an extremely long Euler product
deep into the critical strip.


\section{A function related to the zeta-function }

In Section~\ref{series approx} we showed that short truncations of its  Dirichlet 
series approximate  $\zeta(s)$  in $\s>1$ and that, 
if the Lindel\"{o}f Hypothesis is true,  this also holds in $\s> \frac12 $. 
The approximation cannot be good  in the strip $0<\s \leq  \frac12$ unless 
the length of the sum is of order at least $t $; and this is  so
even if we assume the Lindel\"{o}f or  Riemann Hypothesis.
In Section~\ref{product approx}  we showed that the situation is 
similar, up to a point, when we approximate  $\zeta(s)$ by  the 
weighted Euler product $P_X(s)$: short   products  approximate  $\zeta(s)$ 
well in the half-plane $\s>1$ unconditionally, and in the strip 
$\frac12< \s \leq 1$ on the Riemann Hypothesis. However, the approximation 
 cannot be close in $0<\s <  \frac12$   no matter how 
many factors there are, for   $P_X(s)$  gets much 
larger than $\zeta(s)$ in this strip (see \eqref{P is big}).   

We   now reexamine the approximation of $\zeta(s)$ 
by   sums 
when $\s$ is close to  $\frac12$. If we assume  the Riemann Hypothesis,
then by \eqref{Riemann approx 4}
\begin{equation*}\label{Riemann approx 6}
 \zeta(s) = \sum_{n \leq X} \frac{1}{n^s}   
+  O\left( e^{-C_1\Phi(t)} \right) 
\end{equation*}
for $ \frac12 +  \frac{2C_1\Phi(t)}{\log X} \leq\s \leq 1$  and
 $2\leq X \leq t^2$. This is good for $X $  a small power of $t$ 
as long as $\s$ is not too close to $\frac12$, but
we know   $X$ has to be of order $t$ on $\s=\frac12$.   
This means the approximation   is  off by about
$ \sum_{X< n \leq t} n^{-s} $.
The Hardy-Littlewood approximate functional equation~\cite{HardyLittle} 
(or see Titchmarsh \cite{T}),
gives us  another way to express this. It says that 
\begin{equation}\label{strong afe 1}
\zeta(s) = \sum_{n\leq X}\frac{1}{n^s} + 
\chi(s)\sum_{n\leq |t|/2\pi X}\frac{1}{n^{1-s}}
+ O( X^{-\sigma} )  + O( \t^{-\frac12} X^{1-\sigma} )  \,,
\end{equation}
where  $0 \leq \sigma  \leq 1$,  $|s-1| \geq\frac{1}{10}$,  
and $\chi(s)$ is the  factor 
in the functional equation  
\begin{equation*}\label{f. e.}
\zeta(s) = \chi(s) \zeta(1-s) \,.
\end{equation*}
From this  we see that the  amount by which the  sum 
$\sum_{n\leq X} n^{-s}$ is off 
from $\zeta(s)$  is  about
$$
 \chi(s)\sum_{n\leq |t|/2\pi X}\frac{1}{n^{1-s}}\,.
$$
If we Let $X= \sqrt{|t|/2\pi }$ and note that $|\chi(\frac12 +it)|=1$, 
we find  that on the critical line $\zeta(s)$ is essentially  
composed of two pieces of equal size:
\begin{equation*}\label{strong afe 2}
\zeta(\tfrac12 +it) = \sum_{n\leq X}\frac{1}{n^{\frac12 +it}} + 
\chi(\tfrac12 +it)\sum_{n\leq  X}\frac{1}{n^{\frac12 -it}}
+ O( \t^{-\frac14} )    \,.
\end{equation*}

In the case of  Euler products,   
Theorem~\ref{Approx by P} suggests that  $P_X(s)$ 
approximates  $\zeta(s)$ well even closer to 
the critical line than a  sum  of length $X$ does. 
For $P_X(s)$ is a good approximation  
when  $\s \geq \frac12 + \frac{C\log\log 2\t}{log X}$, 
while the sum  is only close
(as far as we know)  when
$\s \geq \frac12 + \frac{2C_1\Phi(t)}{log X}$. 
In light of this and \eqref{strong afe 1} 
it is tempting to guess that  
 \begin{equation}\label{wrong afe}
\zeta(s) \approx P_X(s)  + \chi(s) P_X(1-s)  \,,
\end{equation}
for some unspecified $X$.
However, this is not a good guess.
For we have seen that $P_X(1-s)$ gets as large as 
$\exp{(X^{\s-\frac12}/\log X)}$ when $\s >\frac12$, whereas 
$\sum_{n\leq X} n^{s-1}$ is no larger than   
$X^{\s-\frac12 }e^{C_1\Phi(t)}$ by Corollary~\ref{bound for sums}. 

The difficulty here is crossing the line $\s =\frac12$, where there is a 
qualitative change in the behavior of the zeta-function. A way around this is to 
use the fact that the functional equation tells us the zeta-function everywhere 
once we know it in  $\s \geq \frac12$. If we  restrict our attention to  this 
half-plane,  a reasonable alternative to \eqref{wrong afe} is
 \begin{equation*}\label{right afe}
 \zeta_X(s)=  P_X(s)  + \chi(s) P_X(\overline{s})  \,.
\end{equation*}
Note  that on the critical line,
$ \zeta_X(s)$ and the right-hand side of \eqref{wrong afe} are identical. 
Also, since   $\chi(s) \ll t^{\frac12-\s}$  for  
$\s > \frac12$  (see \eqref{chi 2} below), we have   
$$
 \zeta_X(s)= P_X(s) (1+ O(t^{\frac12-\s})) \,.
$$
Combining this observation with Theorem~\ref{Approx by P2}, we obtain
\begin{thm}\label{Approx of zeta by zeta_X} 
Assume the Riemann Hypothesis.  Let  
 $  2 \leq X \leq t^2$,  $|s-1|\geq \frac{1}{10}$,
and  $ \frac12 +   \frac{ C \log\log 2\t }{\log X} \leq \s \leq 1$ with $C >1$.
Then  
\begin{equation}\label{approx by P4}
\zeta(s) =  \zeta_X(s) \left(    1 + O \left( \log^{(1-C)/2} \t \right)  \right) \,. 
\end{equation} 
Conversely, if \eqref{approx by P4}   holds  for $2 \leq X \leq t^2$
in the region stated, then $\zeta(s)$ has at most a 
finite number of zeros to the right of  
$\s= \frac12 +   \frac{ C \log\log 2\t }{\log X}$.
\end{thm}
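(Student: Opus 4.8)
The plan is to deduce Theorem~\ref{Approx of zeta by zeta_X} from Theorem~\ref{Approx by P2} together with the elementary relation $\zeta_X(s) = P_X(s)(1 + O(t^{1/2-\s}))$ established just above the statement. Concretely, for the forward direction I would start from the bound $\chi(s) \ll t^{1/2-\s}$ valid for $\s > \tfrac12$ (referenced as \eqref{chi 2} in the paper), which gives $\chi(s)P_X(\overline s) \ll t^{1/2-\s}|P_X(\overline s)|$; since $|P_X(\overline s)| = |P_X(s)|$ by conjugation symmetry of the Dirichlet coefficients $\Lambda_X(n)$ (which are real), this yields $\zeta_X(s) = P_X(s)(1 + O(t^{1/2-\s}))$. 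In the stated region $\s \geq \tfrac12 + \frac{C\log\log 2\t}{\log X}$ one has $t^{1/2-\s} \leq t^{-C\log\log 2\t/\log X} = (\log\log 2\t)^{-C\log t/\log X}$; since $X \leq t^2$ forces $\log t/\log X \geq \tfrac12$, this error is $O((\log\log 2\t)^{-C/2})$, which is dominated by the $O(\log^{(1-C)/2}\t)$ term already present. Combining with \eqref{approx by P3} from Theorem~\ref{Approx by P2}, namely $\zeta(s) = P_X(s)(1 + O(\log^{(1-C)/2} t))$, and substituting $P_X(s) = \zeta_X(s)(1 + O(t^{1/2-\s}))$ gives \eqref{approx by P4} after absorbing the two error terms.

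For the converse, I would argue exactly as in the remark following Theorem~\ref{Approx by P2}. Suppose \eqref{approx by P4} holds throughout the stated region for all $2 \leq X \leq t^2$. Then there is a constant $B > 0$ with $|\zeta(s)\zeta_X(s)^{-1} - 1| \leq B\log^{(1-C)/2}\t$ there. Now suppose $\zeta(\beta + i\g) = 0$ with $\beta > \tfrac12 + \frac{C\log\log(2|\g|+2)}{\log X}$. Then $s = \beta + i\g$ lies in the region where the approximation is asserted, so $\zeta_X(\beta+i\g)$ would have to be comparably small; but plugging $\zeta(\beta+i\g)=0$ into the inequality forces $|{-1}| = 1 \leq B\log^{(1-C)/2}(2|\g|+2)$, i.e. $\log(2|\g|+2) \leq B^{2/(C-1)}$, hence $|\g| \leq \tfrac12 e^{B^{2/(C-1)}} - 1$. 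Thus only finitely many zeros can lie to the right of the stated curve. One subtlety to handle cleanly: $\zeta_X(\beta+i\g)$ might itself vanish, but the inequality $|\zeta/\zeta_X - 1| \leq \cdots$ is being used in the form $|\zeta - \zeta_X| \leq B\log^{(1-C)/2}\t\,|\zeta_X|$, which at a zero of $\zeta$ reads $|\zeta_X| \leq B\log^{(1-C)/2}\t\,|\zeta_X|$, again forcing $\log\t$ bounded (unless $\zeta_X$ also vanishes there, a measure-zero coincidence one can sidestep by perturbing $s$ slightly within the region, using continuity).

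The main obstacle, such as it is, is purely bookkeeping: making sure the error term $O(t^{1/2-\s})$ coming from the $\chi(s)P_X(\overline s)$ piece is genuinely subsumed by the $O(\log^{(1-C)/2}\t)$ term uniformly across the whole region $\tfrac12 + \frac{C\log\log 2\t}{\log X} \leq \s \leq 1$ and uniformly in $X$ over $2 \leq X \leq t^2$. The worst case is $\s$ at the lower edge and $X$ as large as $t^2$, which is exactly the computation carried out above; for larger $\s$ or smaller $X$ the $\chi$-error only improves. No new analytic input is needed beyond \eqref{chi 2}, the reality of $\Lambda_X(n)$, and Theorem~\ref{Approx by P2} itself, so the proof is essentially a two-line reduction followed by the same finiteness argument used for Theorem~\ref{Approx by P2}.
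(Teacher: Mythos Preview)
Your approach is exactly the paper's: the paper literally says ``Combining this observation with Theorem~\ref{Approx by P2}, we obtain'' Theorem~\ref{Approx of zeta by zeta_X}, where ``this observation'' is precisely $\zeta_X(s)=P_X(s)(1+O(t^{1/2-\sigma}))$ from $\chi(s)\ll t^{1/2-\sigma}$ and $|P_X(\overline s)|=|P_X(s)|$, and the converse is the same finiteness argument already given after Theorem~\ref{Approx by P2}.

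One computational slip to fix: in the line $t^{-C\log\log 2\tau/\log X}=(\log\log 2\tau)^{-C\log t/\log X}$, the base on the right should be $\log 2\tau$, not $\log\log 2\tau$ (since $t^{-a}=e^{-a\log t}$ and here $a\log t=(C\log t/\log X)\cdot\log\log 2\tau$). With the correct base the bound becomes $(\log 2\tau)^{-C/2}$, which is indeed $O(\log^{(1-C)/2}\tau)$, so your conclusion is unaffected; as written, however, $(\log\log 2\tau)^{-C/2}$ decays more slowly than $(\log\tau)^{(1-C)/2}$ and would \emph{not} be absorbed.
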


Thus, even though  $ \zeta_X(s)$ is not analytic,   
it   approximates $ \zeta(s)$ well to the right of the critical line. 
It resembles the zeta-function closely in other ways too as we shall see.

\section{The Riemann Hypothesis for $\zeta_X(s)$}

For a closer study of  $ \zeta_X(s)$  we require several  properties of 
  the chi-function  
  \begin{equation*}\label{chi 1}
  \chi(s)  =  \pi^{s-\frac12}
  \frac{ \Gamma(\frac12-\frac12 s)  }{ \Gamma( \frac12 s) } \,,
\end{equation*} 
which  appears in the functional equation of the zeta-function.
Chi has simple poles  at $s= 1, 3, 5, \ldots$ from the 
$\Gamma$-factor in the numerator. If we stay away 
from these,
\begin{equation}\label{chi 2}  	
   \chi(s)  =  \left( \frac{ \t }{2\pi} \right)^{\frac12 - \sigma - \i t} 
   e^{\i t   +\frac14 \i \pi}	\left\{ 1 + O\left(\frac{1}{\t}\right)  \right\}  
\end{equation} 
in any  half-strip $ -k <\sigma < k$,  $t \geq 0$, by Stirling's approximation.
When $ t < 0,  \chi(s) $ is given by the conjugate of this. We
note for later use that the $O$-term is differentiable. 

Clearly $|\chi(\frac12+it)| =1$ for all $t$. The converse   is  also  almost  true. 
\begin{lem}\label{chi lemma} 
There is a positive absolute constant $C_0$ such that
if $ |\chi(\s+it)|=1$ with $0 \leq \s \leq 1$ and $|t|\geq C_0$, then $\s=\frac12 $.
\end{lem}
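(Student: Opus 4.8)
The plan is to use the asymptotic formula \eqref{chi 2} for $\chi(s)$ and take absolute values. Away from the poles at $s=1,3,5,\dots$ (and we may certainly assume $|s-1|\geq\frac1{10}$ once $|t|\geq C_0$ with $C_0$ reasonably large), Stirling gives
\begin{equation*}
|\chi(\s+it)| = \left(\frac{\t}{2\pi}\right)^{\frac12-\s}\left\{1+O\!\left(\frac1\t\right)\right\},
\end{equation*}
since the factor $e^{it+\frac14 i\pi}$ has modulus $1$ and $\left(\frac{\t}{2\pi}\right)^{-it}$ has modulus $1$. So if $|\chi(\s+it)|=1$ then
\begin{equation*}
\left(\frac{\t}{2\pi}\right)^{\frac12-\s} = 1 + O\!\left(\frac1\t\right),
\end{equation*}
and taking logarithms, $\left(\frac12-\s\right)\log\frac{\t}{2\pi} = O\!\left(\frac1\t\right)$. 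For $|t|\geq C_0$ with $C_0$ large enough that $\log\frac{\t}{2\pi}\geq 1$, this forces $\left|\s-\frac12\right| \ll \frac1{\t\log\t}$, which is not quite $\s=\frac12$ on the nose.

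To get the exact conclusion $\s=\frac12$ I would refine the argument rather than settle for "close to $\frac12$." The cleanest route is to study $g(\s) = \log|\chi(\s+it)|$ as a function of $\s$ for fixed $t$ with $|t|\geq C_0$, and show it is strictly monot(decreasing) on $[0,1]$ with its unique zero at $\s=\frac12$. From the definition $\chi(s)=\pi^{s-\frac12}\Gamma(\tfrac12-\tfrac12 s)/\Gamma(\tfrac12 s)$ we have
\begin{equation*}
g(\s) = \log|\chi(\s+it)| = \left(\s-\tfrac12\right)\log\pi + \Re\log\Gamma\!\left(\tfrac12-\tfrac12 s\right) - \Re\log\Gamma\!\left(\tfrac12 s\right),
\end{equation*}
so, differentiating in $\s$ and using $\frac{d}{ds}\log\Gamma(s)=\psi(s)$ (the digamma function),
\begin{equation*}
g'(\s) = \log\pi - \tfrac12\Re\,\psi\!\left(\tfrac12-\tfrac12 s\right) - \tfrac12\Re\,\psi\!\left(\tfrac12 s\right).
\end{equation*}
Now I would invoke the asymptotic $\Re\,\psi(z) = \log|z| + O(1/|z|)$ valid as $|z|\to\infty$ in the relevant sectors: with $z=\tfrac12 s$ and $z=\tfrac12-\tfrac12 s$ both of modulus $\asymp |t|$, this gives $g'(\s) = -\log|t| + O(1)$, which is strictly negative once $|t|\geq C_0$ for a suitable absolute $C_0$. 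Hence $g$ is strictly decreasing on $[0,1]$; since $g(\tfrac12)=0$ (as $|\chi(\tfrac12+it)|=1$ always), $\s=\tfrac12$ is the only solution of $g(\s)=0$ in $[0,1]$, which is exactly the claim.

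The main obstacle is making the digamma estimate uniform and explicit enough to pin down an absolute $C_0$: one needs $\Re\,\psi(z) = \log|z| + O(1)$ with an effective implied constant in the sectors $\arg z \in (-\pi/2,\pi/2]$ that arise here (note $\tfrac12 s$ and $\tfrac12-\tfrac12 s$ have real parts in $[0,\tfrac12]$ and $[0,\tfrac12]$ respectively, so they stay in the right half-plane and Stirling applies cleanly), so that $g'(\s)<0$ can be guaranteed simultaneously for all $\s\in[0,1]$ once $|t|$ exceeds a fixed threshold. This is routine but is the only place where care is required; everything else is bookkeeping with Stirling's formula, and in fact one could alternatively avoid the digamma entirely and just push the first paragraph's argument: combine $\left|\s-\tfrac12\right|\ll \frac1{\t\log\t}$ with the monotonicity of $|\chi|$ near the line to conclude. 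I would present the monotonicity argument as it is the most transparent.
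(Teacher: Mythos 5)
Your proposal is correct and takes essentially the same approach as the paper: both arguments establish that $\frac{d}{d\s}\log|\chi(\s+it)| = \Re\frac{\chi'}{\chi}(\s+it) \approx -\log(\t/2\pi)$ is negative uniformly for $\s\in[0,1]$ once $|t|$ is large, so $\log|\chi(\s+it)|$ is strictly decreasing in $\s$ and vanishes only at $\s=\frac12$. The sole difference is cosmetic --- the paper obtains the derivative estimate by applying Cauchy's integral formula to the Stirling approximation \eqref{chi 2}, while you compute it via digamma asymptotics; your first paragraph's observation that the bare modulus estimate only yields $|\s-\frac12|\ll 1/(\t\log\t)$ is exactly why the monotonicity refinement is needed, and the paper silently makes the same move.
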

\begin{rem*} One can take $C_0 <6.3$, but we do not require this. 
\end{rem*}
\begin{proof}  
Taking the logarithmic derivative of
 \eqref{chi 2} by means of Cauchy's integral formula, 
 we find that 
 \begin{equation*}
\Re\, \frac{\chi^{'}}{\chi}(s) = -\log \frac{\t}{2\pi} + O(\frac{1}{\t}) \,.
\end{equation*}
Since $|\chi(\frac12+it)| =1$, we see that if $\s_1 > \frac12$, then
\begin{align*}
\log |\chi(\s_1+it)| =& \int_{\frac12}^{\s_1} \Re \, \frac{\chi^{'}}{\chi}(s)\,d\s \\
=&  \left(\frac12-\s_1\right)\log \frac{\t}{2\pi} + O\left(\frac{\s_1-\frac12}{\t}\right) \,.
\end{align*}
This is negative for all $t$ sufficiently large  (independently of $\s_1 $), so the result follows.
The proof is similar for $\s_1 < \frac12$.  
\end{proof}

From now on $C_0$ will denote  the constant in Lemma~\ref{chi lemma}.

We now prove

\begin{thm} {\bf (The Riemann Hypothesis for {\boldmath $\zeta_X(s)$})} 
Let $\rho_X = \beta_X + i \gamma_X$ 
denote any zero of $\zeta_X(s)$
with  $0\leq \beta_X \leq 1$ and $\gamma_X \geq C_0$,  the constant in Lemma~\ref{chi lemma}.
Then   $ \beta_X  =  \frac12$. 
\end{thm}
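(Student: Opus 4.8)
The plan is to analyze the defining equation $\zeta_X(s) = P_X(s) + \chi(s)P_X(\overline{s})$ directly at a zero. Suppose $\rho_X = \beta_X + i\gamma_X$ with $\gamma_X \geq C_0$ is a zero with $\beta_X \neq \frac12$; by the functional-equation symmetry of $\chi$ (namely $\chi(s)\chi(1-s)=1$ together with $\chi(\overline{s}) = \overline{\chi(s)}$ on vertical lines, and more generally the reflection properties) one may assume without loss of generality that $\beta_X > \frac12$. The zero condition reads $P_X(\rho_X) = -\chi(\rho_X)P_X(\overline{\rho_X})$, hence
\begin{equation*}
|P_X(\rho_X)| = |\chi(\rho_X)|\,|P_X(\overline{\rho_X})|.
\end{equation*}
The key point is that $P_X(s) = \exp\big(\sum_{n\leq X^2}\Lambda_X(n)n^{-s}/\log n\big)$ satisfies $|P_X(\overline{s})| = |P_X(s)|$ because conjugating $s$ conjugates each summand, so the real part of the exponent is unchanged. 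Therefore the relation collapses to $|\chi(\rho_X)| = 1$, and Lemma~\ref{chi lemma} forces $\beta_X = \frac12$, contradicting $\beta_X > \frac12$. The only gap is making sure that $P_X(\rho_X) \neq 0$, which is immediate since $P_X$ is an exponential and hence never vanishes, and that $\chi(\rho_X)$ is finite and nonzero, which holds since the poles of $\chi$ are at $s = 1, 3, 5, \dots$ (real points) while $\gamma_X \geq C_0 > 0$.

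So the first step is to record that $P_X(s)$ never vanishes and that $|P_X(\overline{s})| = |P_X(s)|$ for all $s$, both trivial from the exponential form in \eqref{P}. The second step is to rewrite the equation $\zeta_X(\rho_X) = 0$ as $P_X(\rho_X) = -\chi(\rho_X)P_X(\overline{\rho_X})$ and take absolute values; since $P_X(\overline{\rho_X}) \neq 0$ we may divide to get $|\chi(\rho_X)| = |P_X(\rho_X)|/|P_X(\overline{\rho_X})| = 1$. The third step is to invoke Lemma~\ref{chi lemma}: since $0 \leq \beta_X \leq 1$ and $|\gamma_X| = \gamma_X \geq C_0$, the equality $|\chi(\beta_X + i\gamma_X)| = 1$ yields $\beta_X = \frac12$.

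I do not expect a genuine obstacle here — the proof is essentially a one-line observation once the symmetry $|P_X(\overline{s})| = |P_X(s)|$ is noticed, which is precisely why $\zeta_X$ was built with $P_X(\overline{s})$ rather than $P_X(1-s)$ (the latter would not have unit-modulus ratio, as the discussion around \eqref{wrong afe} explains). The only things to double-check are bookkeeping: that $\chi$ has no pole at $\rho_X$ (true, its poles are at odd positive integers on the real axis, away from $\gamma_X \geq C_0$), and that the statement of Lemma~\ref{chi lemma} applies verbatim with $t = \gamma_X$. If one wants to be scrupulous about the phrase ``without loss of generality $\beta_X > \frac12$'', one can simply skip it: Lemma~\ref{chi lemma} already handles all $\s$ with $0 \leq \s \leq 1$ and concludes $\s = \frac12$ outright, so no case split on the side of the critical line is needed at all — the argument gives $\beta_X = \frac12$ directly.
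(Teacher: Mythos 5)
Your proof is correct and takes essentially the same route as the paper: both reduce the zero condition to $|\chi(\rho_X)|=1$ by noting that $P_X$ never vanishes and $|P_X(\overline{s})|=|P_X(s)|$, and then invoke Lemma~\ref{chi lemma}. The ``without loss of generality $\beta_X>\frac12$'' detour is unnecessary, as you yourself point out, since the lemma handles all $0\le\s\le1$ directly.
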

\begin{proof}Since $ \zeta_X(s) = P_X(s) + \chi(s) P_X(\overline{s})$ and 
$P_X(s)$  never vanishes,  the  zeros of $\zeta_X(s)$ can only occur at points 
where $|P_X(s)| = |\chi(s ) P_X(\overline{s})|$, that is, where$ |\chi(s )|=1$.
The result now follows from Lemma~\ref{chi lemma}.
\end{proof}


\section{The number of zeros of $\zeta_X(s)$}

In this section we  estimate the number of zeros of  $\zeta_X(s)$ 
up to height $t$ on the critical line and show, among other things,    
that it has  at least as many zeros (essentially) as $\zeta(s)$
does, namely 
\begin{equation}\label{N(t)}
N(t) =  \frac{t}{2\pi}\log \frac{t}{2\pi} - \frac{t}{2\pi} 
+ \frac78 +S(t)+ O\left(\frac{1}{\t}\right)  \,.
\end{equation}
An exact expression  is 
 \begin{equation}\label{N(t) 2}
N(t) =  -\frac{1}{2\pi}\arg \chi(\tfrac12+it)  + S(t)  + 1 \,,
\end{equation}
and we shall use this later. 
Here the argument of $\chi$   is determined by
starting with the value $0$ at $s=2$ and letting it  
vary continuously, first along the segment from $2$  to $2+it$, 
and then horizontally from $2+it$ to $\frac12+it$.

To investigate the zeros of $\zeta_X(s)$ we write
\begin{equation}\label{F formula 1}  
 \zeta_X(s) = P_X(s)\left( 1+ \chi(s) \frac{P_X(\overline{s})}{P_X(s)} \right) .
\end{equation} 
Since  $P_X(s)$ is never zero, $\zeta_X(s)$   vanishes if and only if 
$\chi(s) P_X(\overline{s})/P_X(s)= -1$. Now $|P_X(\overline{s})/P_X(s)| \\=1$,
so  this is equivalent to  $|\chi(s)|=1$ and 
$\arg\left(\chi( s) P_X(\overline{s})/P_X( s)\right)  \equiv \pi \pmod{2\pi}$.
By Lemma~\ref{chi lemma}, if $|\chi(s)|=1$ in the 
half-strip $0\leq \s \leq 1$, $t \geq C_0$, then   $\s=\frac12$.  
Conversely, we know that $|\chi(\frac12+it)|=1$ for all $t$. Thus,
$\zeta_X(s)=0$ in $0\leq \s \leq 1$, $t \geq C_0$ if and only if 
$\arg\left(\chi( \frac12+ i t) P_X( \frac12 - i t)/P_X( \frac12+ i t)\right) 
\equiv \pi \pmod{2\pi}$.
Defining  
\begin{equation}\label{arg formula 1}
F_X(t)= -\arg\chi(\tfrac12+ i t) + 2 \arg P_X( \tfrac12+ i t), 
\end{equation}
we see that when $t \geq C_0$,  $\zeta_X( \frac12+ i t) =0$ if  
and only if
\begin{equation*}\label{arg formula 2}
F_X(t)  \equiv \pi (mod\, 2\pi)\;.
\end{equation*}
This  will be the basis for much of our further work.

Before  turning to our first estimate,
we point out  that, as with $\arg \chi(\frac12+it)$,  $\arg P_X(\frac12+it)$
is defined by continuous variation along the  segments $[2, 2+it]$ and $[2+it, \frac12+it]$,
starting with the value $0$ at $s=2$. Also note from \eqref{arg formula 1} 
that $F_X(t)$ is infinitely differentiable for $t>0$.

We now prove
 
\begin{thm}\label{lower bd on zeros}
Let $N_X(t)$ denote the number of zeros $ \rho = \frac12+\i \gamma_X$
of $\zeta_X(s)$  with $0\leq \gamma_X \leq t$. Then
\begin{equation*}\label{formula for zeros}
N_X(t) \geq \frac{t}{2\pi}\log \frac{t}{2\pi} - \frac{t}{2\pi} 
- \frac{1}{\pi}\sum_{n\leq X^2}  
\frac{\Lambda_X(n)\sin(t \log n)}{n^{\frac12}\log n}
+O_X(1) \,.
\end{equation*}        
\end{thm}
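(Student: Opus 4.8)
The plan is to convert $N_X(t)$ into a counting problem for the smooth real function $F_X(t)=-\arg\chi(\tfrac12+\i t)+2\arg P_X(\tfrac12+\i t)$ of \eqref{arg formula 1}, and then to apply the intermediate value theorem. Recall that, as shown just before \eqref{arg formula 1}, for $t'\ge C_0$ the point $\tfrac12+\i t'$ is a zero of $\zeta_X$ exactly when $F_X(t')\in\pi+2\pi\ZZ$. Thus every such $t'$ in $[C_0,t]$ is an ordinate counted by $N_X(t)$. Since $F_X$ is continuous it attains, by the intermediate value theorem, every value between $F_X(C_0)$ and $F_X(t)$; distinct admissible levels $\pi+2\pi m$ are attained at distinct points, and non-monotonicity of $F_X$ can only increase the count. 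Hence, for $t$ large (the bounded initial range being folded into the final $O_X(1)$),
\[
N_X(t)\ge\frac{F_X(t)-F_X(C_0)}{2\pi}-1.
\]
Note that I only need to \emph{produce} zeros on the critical line, not count them exactly, so neither the converse implication nor the Riemann Hypothesis for $\zeta_X$ is needed here; the finitely many zeros of ordinate in $[0,C_0]$, and $F_X(C_0)$ itself, are $O_X(1)$.

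It remains to expand $F_X(t)$. For the chi-factor I would invoke \eqref{chi 2}: at $\sigma=\tfrac12$ it gives $\chi(\tfrac12+\i t)=(\t/2\pi)^{-\i t}e^{\i t+\i\pi/4}\bigl(1+O(1/\t)\bigr)$, and since $\log(\t/t)=O(1/t)$ this yields
\[
-\arg\chi(\tfrac12+\i t)=t\log\frac{t}{2\pi}-t+O(1)\qquad(t\ge C_0),
\]
the discrepancy between the Stirling branch and the continuous-variation branch of $\arg\chi$ being a fixed integer multiple of $2\pi$, hence $O(1)$. (Alternatively, combine \eqref{N(t) 2} with \eqref{N(t)}; the $S(t)$-terms cancel.) For the Euler-product factor, $\log P_X(s)=\sum_{n\le X^2}\Lambda_X(n)/(n^s\log n)$ is entire by \eqref{P}, so its imaginary part is precisely the continuous-variation argument of $P_X$, and on $\sigma=\tfrac12$
\[
\arg P_X(\tfrac12+\i t)=-\sum_{n\le X^2}\frac{\Lambda_X(n)\sin(t\log n)}{n^{\frac12}\log n}.
\]
Substituting both expansions into \eqref{arg formula 1} gives
\[
F_X(t)=t\log\frac{t}{2\pi}-t-2\sum_{n\le X^2}\frac{\Lambda_X(n)\sin(t\log n)}{n^{\frac12}\log n}+O(1),
\]
and plugging $\tfrac1{2\pi}F_X(t)$ into the displayed lower bound produces exactly the claimed inequality.

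I expect no serious obstacle: the dictionary between zeros of $\zeta_X$ and the level sets of $F_X$ is already in hand, and both $\arg\chi$ and $\arg P_X$ are completely explicit. The one step calling for care is the topological lower bound — one obtains only an inequality because $F_X$ need not be monotone, and recovering the reverse inequality (a matching upper bound for $N_X(t)$) will require controlling the oscillation of $F_X$, equivalently that of the Dirichlet polynomial $\sum_{n\le X^2}\Lambda_X(n)\sin(t\log n)/(n^{\frac12}\log n)$.
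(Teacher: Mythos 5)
Your argument is essentially identical to the paper's: you convert zero-counting into counting solutions of $F_X(t)\equiv\pi\pmod{2\pi}$ on $[C_0,t]$, expand $\arg\chi(\tfrac12+\i t)$ via \eqref{chi 2} and $\arg P_X(\tfrac12+\i t)$ via the Dirichlet polynomial, and apply the intermediate value theorem to get a lower bound. The only (small) added value in your write-up is that you make explicit the branch-matching between the Stirling form of $\arg\chi$ and the paper's continuous-variation convention, a point the paper passes over silently.
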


\begin{proof} 
There are at most  finitely many zeros (the number may depend on $X$)
 with ordinates between
$0$ and $C_0$  (the constant in Lemma~\ref{chi lemma}).  
We may therefore assume  that $t \geq C_0$.       
Now, by \eqref{chi 2}  
\begin{equation*}\label{chi 4}  	
\arg   \chi(\frac12+\i t)  =  - t \log \frac{ t }{2\pi} + t + \frac14 \pi
+ O\left(\frac{1}{\t}\right),
\end{equation*} 
and by \eqref{P}
\begin{align}
\arg P_X( \tfrac12+i t)  
 = \Im  \sum_{n\leq X^2}\frac{\Lambda_X(n)}
 {n^{\frac12+i t}\log n}  
 = - \sum_{n\leq X^2}\frac{\Lambda_X(n) \sin(t\log n)}
 {n^{\frac12}\log n} \,.\notag
\end{align}
Thus, we may  express $F_X(t)$ in \eqref{arg formula 1} as
\begin{equation}\label{arg formula 4}
F_X(t)= t \log \frac{ t }{2\pi} - t -\frac14 \pi - 2 \sum_{n\leq X^2}\frac{\Lambda_X(n)\sin(t\log n)}
 {n^{\frac12}\log n}
+ O\left(\frac{1}{\t}\right) \,.
\end{equation}
Recall that  $\zeta_X(\frac12+\i t)= 0$ when $t \geq C_0$ if and only if 
$F_X(t)  \equiv \pi (mod\, 2\pi)$.
Since  $F_X(t)$  is continuous,  this happens at least
\begin{equation*}
\frac{t}{2\pi} \log \frac{ t }{2\pi} - \frac{t}{2\pi} 
- \frac{1}{\pi} \sum_{n\leq X^2}\frac{\Lambda_X(n)\sin(t\log n)}
 {n^{\frac12}\log n} + O_X(1) 
\end{equation*}
times as on $[C_0, t]$. 
This gives the result.
\end{proof}

The sum over prime powers in \eqref{arg formula 4}  obviously plays an 
important role in producing zeros of $ \zeta_{X}(s)$.  This sum
is just $ -\arg P_X( \frac12+i t)$, but it will be convenient to give it a 
simpler name.  Thus, from now on we 
write
\begin{equation*}\label{f_X} 
f_X(t) =  -\arg P_X( \tfrac12+i t) 
=  \sum_{n\leq X^2}  
\frac{\Lambda_X(n)\sin(t \log n)}{n^{\frac12}\log n}\,.
\end{equation*}

As is well known (see Selberg~\cite{S2} or Titchmarsh~\cite{T}), 
$-(1/\pi)f_X(t)$ is a good approximation in mean-square to 
$S(t)=  (1/\pi)\arg\zeta(\frac12+it)$ if $X$ is a small power of $t$
and if the Riemann Hypothesis holds. 
However, a  closer analogue of 
$S(t)$ is   
\begin{equation*}
S_X(t) = \frac{1}{\pi}\arg \zeta_X(\tfrac12+it) .   
\end{equation*}
From \eqref{F formula 1} we see that
\begin{align*}\label{S_X formula}
S_X(t) =& -\frac{1}{\pi} \sum_{n\leq X^2}  
\frac{\Lambda_X(n)\sin(t \log n)}{n^{\frac12}\log n}
- \frac{1}{\pi} \arg \left(  1+ \chi(\tfrac12+it)       \notag
\frac{P_X(\tfrac12- it)}{P_X(\tfrac12+it)} \right)   \\
=& -\frac{1}{\pi} f_X(t)
- \frac{1}{\pi} \arg \left(  1+ e^{- i F_X(t)} \right).
\end{align*}
The second term on the right, which  contains the jump discontinuities of $S_X(t)$ 
as $t$ passes through zeros of  $\zeta_X(\frac12+it)$,  has modulus $\leq \frac12$. 
(Note that our convention is that  the argument is $\pi/2$ when $1+ e^{- i F_X(t)}$ vanishes).  Thus, 
$S_X(t)$  and $-(1/\pi)f_X(t)$ differ by at most $O(1)$.  

The next theorem shows that  when $X$ is not too small,  $f_X(t)$ and $ S_X(t)$ have 
the same bound as  $S(t)$, namely  $\Phi(t)$.

 \begin{thm}\label{Bound on arg sum} 
Assume the Riemann Hypothesis and that $2  \leq X \leq t^2$. Then
\begin{equation*}
f_X(t)=  \sum_{n\leq X^2}  
\frac{\Lambda_X(n)\sin(t \log n)}{n^{\frac12}\log n}  \ll   \Phi(t) + \frac{\log \t }{\log X}.
\end{equation*}
In particular,  $f_X(t) \ll \Phi(t)$ when $\exp(\log \t/\Phi(t) ) \leq X \leq t^2$.  
The same bounds hold for $ S_X(t)$. 
 \end{thm}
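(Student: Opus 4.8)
The plan is to read the estimate off directly from Theorem~\ref{thm on RH}. The point is that $f_X(t) = -\arg P_X(\tfrac12+it)$ is exactly the prime-power sum on the right of \eqref{arg zeta}, specialized to the critical line. So the first step is to apply \eqref{arg zeta} with $s = \tfrac12+it$; this is legitimate because $\sigma=\tfrac12$ and $|s-1| = \sqrt{\tfrac14+t^2} \geq \tfrac{1}{10}$. Together with $\arg\zeta(\tfrac12+it) = \pi S(t)$ this gives
\[
\pi S(t) = -f_X(t) + O\big(R_X(\tfrac12+it)\big),
\]
so, using the admissibility bound $|S(t)| \leq \Phi(t)$,
\[
f_X(t) \ll \Phi(t) + R_X(\tfrac12+it).
\]

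The second step is to estimate $R_X(\tfrac12+it)$ from its explicit form \eqref{R_X}. On the critical line the factor $X^{\frac12-\sigma}$ equals $1$, leaving
\[
R_X(\tfrac12+it) \ll \Phi(\t) + \frac{\log\t}{\log X} + \frac{X}{\t^2\log^2 X}.
\]
Here I would use $X \leq t^2 \leq \t^2$ to make the last term $\ll 1/\log^2 X \ll 1$, and \eqref{Phi ineq} (with $a=2$, valid since $\t \leq t^2$ for $t$ large) to replace $\Phi(\t)$ by $O(\Phi(t))$. For bounded $t$ the hypothesis $X \leq t^2$ forces $X$ bounded and every quantity in sight is $O(1) \ll \Phi(t)$, so one may assume $t$ large at no cost. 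Combining, $f_X(t) \ll \Phi(t) + \tfrac{\log\t}{\log X}$, which is the first assertion; the ``in particular'' statement is then immediate, since $X \geq \exp(\log\t/\Phi(t))$ forces $\log\t/\log X \leq \Phi(t)$.

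For $S_X(t)$ the plan is to quote the identity established just before the theorem, namely $S_X(t) = -\tfrac{1}{\pi}f_X(t) - \tfrac{1}{\pi}\arg\big(1 + e^{-iF_X(t)}\big)$, in which the last term has modulus at most $\tfrac12$. Hence $S_X(t) = -\tfrac{1}{\pi}f_X(t) + O(1)$, and since $\Phi(t) \gg 1$ the bounds just proved for $f_X(t)$ transfer to $S_X(t)$ verbatim.

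I do not expect a genuine obstacle: all the analytic substance is already contained in Theorem~\ref{thm on RH}, and what remains is bookkeeping. The one mildly delicate point is verifying that \emph{each} term of $R_X$ on the critical line is dominated by $\Phi(t) + \tfrac{\log\t}{\log X}$ --- this is precisely where the hypothesis $X \leq t^2$ is used (to control $X/\t^2\log^2 X$) and where the gap between $\Phi(\t)$ and $\Phi(t)$ must be bridged via the slow-growth property \eqref{Phi ineq}. As a sanity check one could instead integrate the explicit formula \eqref{formula 1} horizontally from $\sigma = \tfrac12$ out to $\sigma = 2$ and bound the resulting sum over zeros with Lemma~\ref{Zero Sum}, but that would merely reprove the relevant special case of Theorem~\ref{thm on RH}.
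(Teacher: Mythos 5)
Your proof is correct and follows the paper's own argument essentially verbatim: both apply \eqref{arg zeta} at $s=\tfrac12+it$, use $|S(t)|\leq\Phi(t)$ to isolate $f_X(t)$, and transfer to $S_X(t)$ via $S_X(t)+\tfrac1\pi f_X(t)\ll 1$. You spell out the estimation of $R_X(\tfrac12+it)$ (the $X/\t^2\log^2X$ term and the $\Phi(\t)$ versus $\Phi(t)$ bridge) a bit more explicitly than the paper, but there is no difference in method.
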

 
\begin{proof} Since $ S_X(t)+ (1/\pi)f_X(t) \ll 1$, 
it suffices to prove the result for $f_X(t)$.
By \eqref{R_X} and \eqref{arg zeta} we have
\begin{equation*}
S( t) = - \frac{1}{\pi} \sum_{n\leq X^2} 
  \frac{\Lambda_X(n) \sin(t \log n)}{n^{\frac12} \log n} 
+ O\left(\Phi(t)\right) 
+ O\left(\frac{\log \t}{\log X}\right) \,.
 \end{equation*}
Since $S(t) \ll \Phi(t)$, the result follows. 
\end{proof}

From Theorem~\ref{Bound on arg sum}  and Theorem~\ref{lower bd on zeros}
we immediately obtain
\begin{thm}
Assume the Riemann Hypothesis is true. Then for   $2 \leq X \leq t^2$,   
 \begin{equation}\label{lower bd}
N_X(t) \geq \frac{t}{2\pi}\log \frac{t}{2\pi} - \frac{t}{2\pi} + O( \log \t )  \,.
\end{equation}
Moreover, if   $\exp(c\log \t/\Phi(t) ) \leq X \leq t^2$,  
 where $c$ is any positive constant, then
\begin{equation*}
N_X(t) \geq \frac{t}{2\pi}\log \frac{t}{2\pi} - \frac{t}{2\pi} 
+O(\Phi(t)) \,.
\end{equation*}
\end{thm}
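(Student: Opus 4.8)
The plan is to obtain this directly by feeding Theorem~\ref{Bound on arg sum} into Theorem~\ref{lower bd on zeros}; no new ingredient is needed. Theorem~\ref{lower bd on zeros} holds unconditionally and already reduces matters to the prime--power sum occurring in it, which is exactly $f_X(t)=\sum_{n\le X^2}\Lambda_X(n)\sin(t\log n)/(n^{1/2}\log n)$: it gives, for every $X\ge 2$ and $t\ge C_0$,
\[
N_X(t)\ \ge\ \frac{t}{2\pi}\log\frac{t}{2\pi}-\frac{t}{2\pi}-\frac{1}{\pi}f_X(t)+O_X(1),
\]
the $O_X(1)$ merely accounting for the finitely many zeros of $\zeta_X$ of height at most $C_0$. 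On the Riemann Hypothesis, Theorem~\ref{Bound on arg sum} supplies $f_X(t)\ll\Phi(t)+\log\t/\log X$ throughout $2\le X\le t^2$.

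To deduce \eqref{lower bd} I would observe that $\log X\ge\log 2$, so $\log\t/\log X\ll\log\t$, and that $\Phi$ may be taken with $\Phi(t)\ll\log\t$ --- the estimate of Theorem~\ref{Bound on arg sum} may after all be run with the unconditional admissible choice $\Phi(t)=\tfrac16\log\t$. Hence $f_X(t)\ll\log\t$, and substituting into the displayed inequality (and absorbing $O_X(1)$ into the error) gives $N_X(t)\ge\frac{t}{2\pi}\log\frac{t}{2\pi}-\frac{t}{2\pi}+O(\log\t)$. For the sharper statement I would use the extra hypothesis $X\ge\exp(c\log\t/\Phi(t))$, which yields $\log X\ge c\log\t/\Phi(t)$ and hence $\log\t/\log X\le\Phi(t)/c\ll\Phi(t)$; then $f_X(t)\ll\Phi(t)$, and the same substitution gives the error term $O(\Phi(t))$.

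Since the real content is already contained in the two cited theorems, there is essentially no obstacle. The only steps that need a remark are the two elementary bounds $\log\t/\log X\ll\log\t$ and, under the stronger lower bound on $X$, $\log\t/\log X\ll\Phi(t)$, together with the observation that an admissible $\Phi$ may be chosen to be $\ll\log\t$ for the first estimate and the routine book-keeping of the $X$-dependent constant from Theorem~\ref{lower bd on zeros}.
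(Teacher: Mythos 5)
Your proposal is correct and takes the same route as the paper: the paper derives the theorem ``immediately'' by combining Theorem~\ref{lower bd on zeros} with Theorem~\ref{Bound on arg sum}, exactly as you do, and your two elementary observations (invoking the unconditional admissible choice $\Phi(t)=\tfrac16\log\t$ so that $\Phi(t)+\log\t/\log X\ll\log\t$ for the first estimate, and $\log\t/\log X\ll\Phi(t)$ under the stronger lower bound on $X$ for the second) are precisely the bookkeeping the paper leaves implicit. The only nit is your parenthetical gloss on the $O_X(1)$: in the proof of Theorem~\ref{lower bd on zeros} that term comes chiefly from the endpoint value $F_X(C_0)$ in the crossing-count on $[C_0,t]$ rather than from the zeros below height $C_0$ (which can only help a lower bound); this does not affect your deduction, since you use the theorem as a black box, and the paper's own statement carries the same implicit $X$-dependence in the final error term.
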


To obtain an upper bound for $N_X(t)$ of the same order we require the
 following theorem.


\begin{thm}\label{Bound on real part sum} 
Assume the Riemann Hypothesis and that $2  \leq X \leq t^2$. Then
\begin{equation*}\label{bd im part sum} 
 f_{X}^{'}(t) =  \sum_{n\leq X^2} 
  \frac{\Lambda_X(n) \cos(t \log n)}{n^{\frac12}  } 
 \ll   \Phi(\t)\log X + \frac{\log \t}{\log X} \,.
 \end{equation*}
\end{thm}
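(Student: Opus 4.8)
The plan is to start from the identity $f_X'(t) = \sum_{n \le X^2} \Lambda_X(n)\cos(t\log n)/n^{1/2}$ — this follows by differentiating the definition of $f_X(t)$ term by term — and relate the right-hand side to a derivative of $\arg\zeta$ (or of the explicit-formula expression for $\zeta'/\zeta$) so that Lemma~\ref{Zero Sum} can be applied. Concretely, I would go back to \eqref{formula 1}, the explicit formula for $\zeta'/\zeta(s)$, and take imaginary parts on the line $\s = \tfrac12 + \tfrac{1}{\log X}$ (staying a distance $1/\log X$ to the right of the critical line so that $|s-\rho|\log X \ge 1$ holds for every zero, as in the proof of Theorem~\ref{thm on RH}). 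The left side contributes $\Im\,\zeta'/\zeta$, which on the Riemann Hypothesis is controlled by the zero-sum $\sum_\rho (\s-\tfrac12)/((\s-\tfrac12)^2+(t-\g)^2)$ that Lemma~\ref{Zero Sum} bounds by $\log\t + \Phi(t)/(\s-\tfrac12) \ll \log\t + \Phi(t)\log X$. The prime-power term on the right of \eqref{formula 1} at $\s = \tfrac12+\tfrac{1}{\log X}$ is $\sum_{n\le X^2}\Lambda_X(n)\cos(t\log n)/n^{1/2+1/\log X}$, which differs from $f_X'(t)$ only by the factor $n^{-1/\log X}$; since $n \le X^2$, that factor lies between $e^{-2}$ and $1$, so this substitution costs only a bounded multiplicative constant plus a manageable error.

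The remaining pieces of \eqref{formula 1} to dispose of are: the pole term $\big(X^{2(1-s)}-X^{1-s}\big)/((1-s)^2\log X)$, which on our line is $O(X/(\t^2\log X))$ and hence negligible; the sum over zeros $\sum_\rho (X^{\rho-s}-X^{2(\rho-s)})/((s-\rho)^2\log X)$, which by the trivial bound $|X^{\rho-s}| = X^{1/2-\s} = e^{-1}$ and the tail estimate \eqref{fund sum 2} of Lemma~\ref{Zero Sum} is $\ll (\log\t + \Phi(\t)\log X)/\log X = \log\t/\log X + \Phi(\t)$; and the $O(X^{-\s-2}/(\t^2\log X))$ error. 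Collecting everything: $f_X'(t) \ll \log\t + \Phi(t)\log X + \log\t/\log X + \Phi(t)$, and since $X\ge 2$ the dominant terms are $\Phi(\t)\log X + \log\t/\log X$, which is exactly the claimed bound. (One should also note, as in the proof of Theorem~\ref{thm on RH}, that moving from the line $\s=\tfrac12+\tfrac1{\log X}$ back to $\s=\tfrac12$ — where $f_X'(t)$ actually lives — introduces at most the contribution of zeros with $|t-\g|\le 1/\log X$, which by \eqref{zeros in interval} is again $\ll \log\t/\log X + \Phi(t)$, absorbed into the error.)

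I expect the main obstacle to be the passage across (or near) the critical line: on $\s=\tfrac12$ itself the individual terms $(s-\rho)^{-2}$ in the explicit formula can be as large as $(\log X)^2$ for zeros within $1/\log X$ of $t$, so one cannot simply set $\s=\tfrac12$. The clean way around this is to prove the estimate on the shifted line $\s=\tfrac12+\tfrac1{\log X}$ first (where the lower bound $|s-\rho|\log X\ge 1$ tames every term), and then argue that the difference $f_X'(\tfrac12+it) - \Re\big[(\text{prime sum at }\tfrac12+\tfrac1{\log X})\big]$ — or rather, that the change in the prime sum under the shift $\tfrac1{\log X}$ in $\s$ — is itself $O(\Phi(\t)\log X + \log\t/\log X)$, which follows because $\tfrac{d}{d\s}\sum \Lambda_X(n)\cos(t\log n)/n^\s = -f_X''$-type sums are crudely $\ll \sum_{n\le X^2}\Lambda_X(n)\log n/n^{1/2} \ll X\log X$, too weak, so instead one should carry the explicit-formula argument directly on $\s=\tfrac12$ using the regularized counting that isolates nearby zeros, exactly mirroring the two-case split ($|s-\rho|\log X\ge1$ versus $<1$) already used in the proof of Theorem~\ref{thm on RH} for $\arg\zeta$. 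Apart from bookkeeping the nearby-zero terms, the argument is a routine application of Lemma~\ref{Zero Sum} to the explicit formula, so I would expect the write-up to be short.
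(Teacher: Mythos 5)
Your plan cannot yield the stated estimate, and the obstruction is structural: bounding each piece of the explicit formula by absolute values destroys a cancellation that the proof depends on. Taking real parts of \eqref{formula 1} at $\sigma=\tfrac12$ on RH, one arrives at an identity of the shape
\[
f_X'(t) = \tfrac12\log\tfrac{\tau}{2\pi}  - \frac{1}{\log X}\sum_\gamma\frac{C(\gamma-t)}{(\gamma-t)^2} + O(1),\qquad C(v)=\cos(v\log X)-\cos(2v\log X).
\]
The paper then computes the \emph{main term} of the zero sum: writing it as a Stieltjes integral against $N(\cdot)$, the smooth part of $N$ together with the evaluation $\int_{\mathbb R}C(v)v^{-2}\,dv = \pi\log X + O(1)$ produces exactly $\tfrac12\log\tfrac{t}{2\pi}$, which cancels the explicit $\tfrac12\log\tfrac{\tau}{2\pi}$ term; only the $dS(t+v)$ contribution survives, and that is what is $O(\Phi(\tau)\log X)$. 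By replacing the zero sum with a triangle-inequality bound you lose this cancellation entirely, and even after correcting two computational slips in your draft you would at best reach $O(\log\tau + \Phi(\tau)\log X)$. This is strictly weaker than the theorem: taking $\Phi(\tau)=\tfrac12\log\tau/\log\log\tau$ and $\log X\asymp\sqrt{\log\log\tau}$ makes $\Phi(\tau)\log X + \log\tau/\log X = o(\log\tau)$, so the claimed right-hand side does \emph{not} dominate $\log\tau$, contrary to what you assert when you ``collect everything.'' (The two slips: applying Lemma~\ref{Zero Sum} to $\sum_\rho|s-\rho|^{-2}$ on $\sigma=\tfrac12+\tfrac1{\log X}$ gives $\log X\log\tau+\Phi(\tau)\log^2X$, not $\log\tau+\Phi(\tau)\log X$, so after dividing by $\log X$ you get $\log\tau+\Phi(\tau)\log X$ rather than $\log\tau/\log X+\Phi(\tau)$; and you write ``imaginary parts'' where ``real parts'' is needed, since $f_X'(t)$ is $\Re\sum\Lambda_X(n)n^{-1/2-it}$.)

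There is also a secondary gap you yourself flag: the prime sum at $\sigma=\tfrac12+\tfrac1{\log X}$ is \emph{not} a bounded multiple of $f_X'(t)$, because the factor $n^{-1/\log X}$ varies with $n$; the difference $\sum_{n\le X^2}\Lambda_X(n)\cos(t\log n)(1-n^{-1/\log X})/\sqrt{n}$ needs a genuine estimate, not a heuristic. Your proposed fix — go directly to $\sigma=\tfrac12$ and split zeros by $|t-\gamma|\log X\gtrless 1$ — is not even necessary for convergence (on RH the numerator $X^{\rho-s}-X^{2(\rho-s)}$ vanishes to second order as $s\to\rho$, so $C(v)/v^2$ is bounded near $v=0$), and by itself it still does not produce the cancellation of the $\log\tau$ main term. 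The ingredient you are missing is the asymptotic evaluation of $\sum_{|\gamma-t|\le 1}C(\gamma-t)/(\gamma-t)^2$ via the smooth part of the zero-counting function, which is exactly what makes the $\tfrac12\log\tau$ disappear and leaves only the $\Phi(\tau)\log X + \log\tau/\log X$ remainder.
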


\begin{proof} Taking the real part of  \eqref{formula 1} 
with $\s= \frac12$, we obtain
\begin{align}\label{re log deriv 1}
\Re\; \frac{\zeta^{'}}{\zeta}(s) 
=  &     \sum_{\g} \frac{\cos((\g-t)\log X )  - \cos(2(\g-t)\log X ) }{( \g-t)^2  \log X }  
 -  \sum_{n\leq X^2} \frac{\Lambda_X(n) \cos(t \log n)}{\sqrt{n}  } \\
 &+ O\left( \frac{ 1}{\log X} \right) \,. \notag
\end{align}
Similarly, from  
\begin{equation*}
\frac{\zeta^{'}}{\zeta}(s) = \sum_{\rho} 
\left(\frac{1 }{s-\rho   } + \frac{1}{\rho} \right)
-\frac12 \log \frac{\t}{2\pi} +O(1) 
\end{equation*}
we see that
\begin{equation*}
\Re \frac{\zeta^{'}}{\zeta}(s) =  
-\frac12 \log \frac{\t}{2\pi} +O(1)\,. 
\end{equation*}
We substitute this into the left-hand side of \eqref{re log deriv 1}
and rearrange and  find that
\begin{align*} 
\sum_{n\leq X^2}   \frac{\Lambda_X(n) \cos(t \log n)}{\sqrt{n} } 
 \; =\tfrac12 \log \frac{\t}{2\pi}   
+    \sum_{\g} \frac{\cos((\g-t)\log X )  - \cos(2(\g-t)\log X ) }{( \g-t)^2   \log X } 
+ O(1)\,.
\end{align*}
In the sum over zeros, the terms  with   $|t-\g| \geq 1$ contribute
$ 
\ll \log \t /\log  X \,.
$ 
Thus, writing $C(v) = \cos(v \log X) -\cos(2v \log X)$, we have

\begin{equation}\label{step 1}
 \sum_{n\leq X^2}  \frac{\Lambda_X(n) \cos(t \log n)}{\sqrt{n} } 
  =\frac12 \log \frac{\t}{2\pi}   
+  \frac{1}{\log X}  \sum_{|\g-t| \leq 1} \frac{C(\g-t) }{( \g-t)^2  } 
+ O\left(\frac{\log \t}{\log X}\right)\,. 
\end{equation}

To estimate the sum on the last line, first note that

\begin{equation}\label{C est}
C(v)  =
\begin{cases}
\frac32 v^2 \log^2 X  +O( |v|^4 \log^4 X) &\quad \hbox{if}\quad  |v| \leq 1/\log X \,, \\
O(1)  &\quad \hbox{if} \quad |v| > 1/\log X \,,
\end{cases}
\end{equation}
and that
\begin{equation*}\label{C' est}
C'(v) = 
\begin{cases}
3 v  \log^2 X  +O( |v|^3 \log^4 X) &\quad \hbox{if}\quad  |v| \leq 1/\log X \,, \\
O(\log X)  &\quad \hbox{if} \quad |v| > 1/\log X \,.
\end{cases}
\end{equation*}
In particular, it follows that
\begin{equation}\label{C/v^2}
\frac{d}{dv}\left(\frac{C(v) }{v^2  }\right) 
 = \frac{v^2 C^{'}(v) -2 v C(v)}{v^4}
 \ll 
\begin{cases}
 |v| \log^4 X &\quad \hbox{if}\quad  |v| \leq 1/\log X \,, \\
v^{-2} \log X   &\quad \hbox{if} \quad |v| > 1/\log X \,.
\end{cases} 
\end{equation}

Now, by \eqref{N(t)}  
\begin{align*}
  \sum_{|\g-t| \leq 1} \frac{C(\g-t) }{( \g-t)^2  } 
&=  \int_{|u-t | \leq 1} \frac{C(u-t) }{( u-t)^2  } \;dN(u) \\
& =   \frac{1}{2\pi} \int_{|v | \leq 1} \frac{C(v) }{v^2  } 
  \log \left(\frac{t+v}{2\pi}\right) \;dv \; +  \int_{|v | \leq 1} \frac{C(v) }{v^2  } 
\;dS(t+v) \;. \notag
\end{align*}
Using \eqref{C/v^2}, we see that the integral with respect to $dS $ is, 
\begin{align*}
  S(t+v)\;&\frac{C(v) }{v^2  }\bigg|_{ -1}^{ 1} 
 -  \int_{|v | \leq 1} S(t+v)
  \frac{d}{dv}\left(\frac{C(v) }{v^2  }\right) \;dv       \notag   \\
  & \ll    \Phi(\t)    + 
  \Phi(\t) \left( \int_{|v | \leq 1/\log X} |v|  \log^4X \;dv  
  +     \int_{1/\log X < |v | \leq 1} \frac{ \log X}{v^2} \;dv  \right)  \\
 & \ll  \Phi(\t) \; \log^2 X 
\,. \notag
\end{align*}
The other integral is  
\begin{align}\label{step 2}
 \frac{1}{2\pi} \int_{|v | \leq 1}  
\left( \log  \frac{t}{2\pi} + O\left(\frac{|v|}{t}  \right)   \right) 
 \frac{C(v) }{v^2  }  \;dv \,.\notag
\end{align}
The $O$-term contributes
\begin{equation*}\label{O-term}
\ll  \frac{1}{\t } \left( \log^2  X
 \int_{|v | \leq 1/\log X}  |v| \;dv    +  \int_{ 1/\log X <|v | \leq 1}  |v|^{-1} \;dv  \right)
 \ll   \frac{1}{\t } \, \log \log X  
\end{equation*}
by \eqref{C est}.
Thus, combining these results, we find that
\begin{equation}\label{sum g}
  \sum_{|\g-t| \leq 1} \frac{C(\g-t) }{( \g-t)^2  } 
= \frac{1}{2\pi  } \log \frac{t}{2\pi}  \;\int_{|v | \leq 1}  \frac{C(v) }{v^2  } 
  \;dv  \;+ \;O(\Phi(\t) \log^2 X) \,.
\end{equation}

To calculate  the integral we write
\begin{align*}\label{to do}
  \int_{|v | \leq 1}  \frac{C(v) }{v^2  }  \;dv 
  =   \int_{-\infty}^{\infty}  \frac{C(v) }{v^2  } \;dv 
-   \int_{|v | >1}  \frac{C(v) }{v^2  } 
  \;dv \;.
\end{align*}
By \eqref{C est} the second integral is  $O(1)$.
By the calculus of residues and the definition of 
$C(v)$, the first   equals

\begin{equation*}
\begin{aligned} \label{main term}
 \int_{-\infty}^{\infty}  \frac{\cos(v \log X) -\cos(2v \log X)}{v^2  } \;dv 
 &=  \Re \;  \int_{-\infty}^{\infty}  \frac{e^{i v \log X} - e^{2 i v \log X} }{v^2  } \;dv   \notag \\
 & = \Re \; 2\pi i   \left(-\tfrac12 \;\mbox{Res}_{v=0} \; \frac{e^{i v \log X} - e^{2 i v \log X} }{v^2  } \right) \\
 &= - \Re \;\pi i\; \left( -i \log X  \right) = - \pi \log X \,.    
 \end{aligned}
 \end{equation*}
 Thus, 
 $$
 \int_{|v | \leq 1 }  \frac{C(v) }{v^2  }  \;dv = - \pi \log X + O(1)\,.
 $$
 
Using this in \eqref{sum g}, we obtain
 \begin{equation*}
  \sum_{|\g-t| \leq 1} \frac{C(\g-t) }{( \g-t)^2  } 
= -\frac{1}{2  }\; \log \frac{t}{2\pi}\; \log X  \;+ \;O(\Phi(\t) \log^2 X) 
 \;+ \;O(\log \t) \,. 
\end{equation*}
It therefore  follows from \eqref{step 1} that
\begin{align*}\label{}
\sum_{n\leq X^2}   &\frac{\Lambda_X(n) \cos(t \log n)}{\sqrt{n} } 
  \ll  \Phi(\t) \log X  \;+\;  \frac{\log \t}{\log X} \,.
\end{align*}
This completes the proof of the theorem.
\end{proof}

The zeros of $\zeta_X(\s+it)$ with $t\geq C_0$
 arise as the solutions of   
 \begin{equation*}\label{arg formula 3}
F_X(t)  \equiv \pi (mod\, 2\pi),
\end{equation*}
and their number  in $[C_0, t]$ is at least
$(1/2\pi)\,F_X(t) +O_X(1)$ because this is the minimum number of times
the curve $y= F_X(t)$ crosses the horizontal lines
$y= \pi, 3\pi, 5\pi, ...$.
However, there could be ``extra'' solutions if  $F_X(t)$ is not monotone 
increasing.  Now 
\begin{equation}\label{F'}
F_{X}^{'}(t) = \log \frac{t}{2\pi} - 2\sum_{n\leq X^2} 
  \frac{\Lambda_X(n) \cos(t \log n)}{n^{\frac12}  } 
  + O\left(\frac{1}{\t}\right)\,.
\end{equation}
By Theorem~\ref{Bound on real part sum} there exists a positive 
constant $C_3$, say, such that if $X \leq \exp(C_3\log \t/\Phi(t))$ 
and $t$ is large enough, then
\begin{equation}\label{pos deriv}
 \sum_{n\leq X^2} 
  \frac{\Lambda_X(n) \cos(t \log n)}{n^{\frac12}  }  
  <  \frac12 \log\frac{t}{2\pi} \,.
\end{equation}
This means $F_X^{'}(t)$ is positive, so 
$F_X(t)  \equiv \pi (mod\, 2\pi)$ has no  extra  solutions.
We have therefore proved

\begin{thm}\label{exact zero count} Assume  the Riemann Hypothesis. There is a constant $C_3>0$ 
 such that if  $X <  \exp\big(C_3 \log t /  \Phi(t) \big)$, then 
\begin{equation}\label{N formula 1}
 N_X(t) =
\frac{t}{2\pi} \log \frac{t}{2\pi} -\frac{t}{2\pi}  
 - \frac{1}{\pi}\sum_{n\leq X^2}\frac{\Lambda_X(n) \sin(t\log n)}
 {n^{\frac12}\log n} +O_X(1) \,.
\end{equation}
Less precisely,  
\begin{equation*}\label{N formula 2}
 N_X(t) =
\frac{t}{2\pi} \log \frac{t}{2\pi} -\frac{t}{2\pi}  
   +O_X(\Phi(\t))\,.
\end{equation*} 
\end{thm}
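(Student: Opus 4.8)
The lower bound asserted in the theorem is already Theorem~\ref{lower bd on zeros}, so the real task is the matching upper bound, and the plan is to obtain it by showing that the hypothesis $X<\exp(C_3\log t/\Phi(t))$ forces $F_X$ to be eventually monotone increasing, so that the crossings $F_X(t)\equiv\pi\pmod{2\pi}$ --- which for $t\geq C_0$ are exactly the on-line zeros of $\zeta_X$ --- occur at the minimal possible rate. I would start from the facts already in hand: $N_X(t)$ counts exactly the zeros $\frac12+\i\gamma_X$ with $0\leq\gamma_X\leq t$, only $O_X(1)$ of which lie below any fixed height, and by \eqref{arg formula 4}
$$F_X(t)=t\log\frac{t}{2\pi}-t-\frac{\pi}{4}-2f_X(t)+O\!\left(\frac1\t\right),\qquad f_X(t)=\sum_{n\leq X^2}\frac{\Lambda_X(n)\sin(t\log n)}{n^{1/2}\log n}.$$
So it suffices to count solutions of $F_X(\tau)\equiv\pi\pmod{2\pi}$ on an interval $[t_0,t]$ on which $F_X$ increases.

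The crux --- and the step I expect to be the main obstacle --- is proving $F_X'(t)>0$ for all large $t$. By \eqref{F'}, $F_X'(t)=\log\frac{t}{2\pi}-2\sum_{n\leq X^2}\Lambda_X(n)\cos(t\log n)\,n^{-1/2}+O(1/\t)$, so I would only need to bound the prime-power sum by a small multiple of $\log\frac{t}{2\pi}$. Theorem~\ref{Bound on real part sum} gives a constant $c_0$ with that sum at most $c_0\big(\Phi(\t)\log X+\log\t/\log X\big)$ in absolute value, and I would choose $C_3$ small enough that $\Phi(t)\log X<C_3\log t$ forces $c_0\Phi(\t)\log X<\tfrac16\log\frac{t}{2\pi}$ for $t$ large (using $\Phi(\t)\ll\Phi(t)$, from \eqref{Phi ineq}). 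The term $c_0\log\t/\log X$ I would dispose of by cases: when $X$ stays below a fixed absolute bound the entire prime-power sum is trivially $O_X(1)$, so $F_X'(t)=\log\frac{t}{2\pi}+O_X(1)>0$; and when $X$ exceeds that bound, $\log X$ exceeds a fixed constant, so $c_0\log\t/\log X\leq\tfrac16\log\frac{t}{2\pi}$ for $t$ large. Either way $F_X'(t)>0$ for $t\geq t_0$, where $t_0$ may depend on $X$.

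Granting this monotonicity, on $[t_0,t]$ the value $F_X(\tau)$ passes through each of the levels $(2k+1)\pi$ lying in $(F_X(t_0),F_X(t)]$ exactly once, so the number of solutions of $F_X(\tau)\equiv\pi\pmod{2\pi}$ there is $\frac{1}{2\pi}(F_X(t)-F_X(t_0))+O(1)=\frac{1}{2\pi}F_X(t)+O_X(1)$. Adding the $O_X(1)$ on-line zeros with ordinate below $t_0$ and inserting the expansion of $F_X$ above --- the constant $-\pi/4$, the $O(1/\t)$, and $F_X(t_0)$ all being absorbed into $O_X(1)$ --- would give \eqref{N formula 1}. The less precise formula would then follow at once, since $|f_X(t)|\leq\sum_{n\leq X^2}\Lambda_X(n)\,n^{-1/2}(\log n)^{-1}=O_X(1)=O_X(\Phi(\t))$ (alternatively one can quote Theorem~\ref{Bound on arg sum}). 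Everything outside the second paragraph is bookkeeping, and the hypothesis on $X$ enters solely to keep $\Phi(\t)\log X$ small next to $\log t$.
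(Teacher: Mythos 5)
Your proof is correct and follows essentially the same route as the paper: the lower bound is Theorem~\ref{lower bd on zeros}, and the matching upper bound comes from showing that the hypothesis on $X$ forces $F_X'(t)>0$ for $t$ large (via Theorem~\ref{Bound on real part sum} and \eqref{F'}), so the crossings $F_X(t)\equiv\pi\pmod{2\pi}$ occur exactly $\frac{1}{2\pi}F_X(t)+O_X(1)$ times. The one place you are slightly more careful than the paper is the term $\log\tau/\log X$, which for bounded $X$ is not small compared to $\log t$; your case split (trivial $O_X(1)$ bound when $X$ is bounded, $\log X$ bounded below otherwise) patches that cleanly, whereas the paper folds it into the phrase ``$t$ large enough'' without spelling it out.
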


It would be interesting and useful to know whether \eqref{N formula 1} 
(perhaps with a  larger $O$-term) also holds when $X$ is a small fixed  power of $t$.
If that is the case,  classical results about the statistics 
of the  zeros of $\zeta(s)$ whose proofs depend  on approximating $S(t)$
by the trigonometric polynomial $-(1/\pi) f_X(t)$ would   hold for the zeros of $\zeta_X(s)$ as well. 
What we can show for larger $X$  is  the following unconditional result.

\begin{thm}\label{no. of zeros}   
There exists a positive constant $C_4$ such that if $X \leq t^{C_4}$, then
$$
N_X(t) \ll t\log t \,.  
$$
Moreover, if \,$\log X/\log t =o(1)$, then
\begin{equation*}\label{N asymp}
 N_X(t) = \left(1+ o(1) \right)
\frac{t}{2\pi} \log \frac{t}{2\pi} \,.
\end{equation*}
\end{thm}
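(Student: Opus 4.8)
The plan is to translate the theorem into a counting problem for the real function $F_X$ of \eqref{arg formula 1} and then bound that count above and below. Since $|\chi(\tfrac12+it)|=1$ and $|P_X(\tfrac12-it)|=|P_X(\tfrac12+it)|$, one has $\zeta_X(\tfrac12+it)=P_X(\tfrac12+it)\bigl(1+e^{-iF_X(t)}\bigr)$, so a zero of $\zeta_X$ on the critical line at height $u$ occurs exactly when $F_X(u)\equiv\pi\pmod{2\pi}$; hence $N_X(t)=\#\{u\in[0,t]:F_X(u)\equiv\pi\pmod{2\pi}\}$, counted with multiplicity. Only $O_X(1)$ of these $u$ lie in $[0,C_0]$, and since on that bounded range $\arg\chi(\tfrac12+iu)$ has bounded variation while $\arg P_X(\tfrac12+iu)$ has variation $\ll X$ and $F_X$ has $\ll X^2$ intervals of monotonicity, this low count is $\ll X^2$. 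So up to $O(X^2)$ we must count the solutions in $[C_0,t]$, where by \eqref{arg formula 4} and \eqref{F'} we may use $F_X(u)=u\log\tfrac{u}{2\pi}-u-\tfrac14\pi-2f_X(u)+O(1/\tau)$ and $F_X'(u)=\log\tfrac{u}{2\pi}-2f_X'(u)+O(1/\tau)$.

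The lower bound is immediate from Theorem~\ref{lower bd on zeros}, which gives $N_X(t)\ge\tfrac{t}{2\pi}\log\tfrac{t}{2\pi}-\tfrac{t}{2\pi}-\tfrac1\pi f_X(t)+O_X(1)$: since trivially $|f_X(t)|\le\sum_{n\le X^2}\Lambda(n)/(\sqrt n\log n)\ll X/\log X$, when $\log X/\log t=o(1)$ all the error terms (the $O_X(1)$ being $\ll X^2$) are $o(t\log t)$, so $N_X(t)\ge(1+o(1))\tfrac{t}{2\pi}\log\tfrac{t}{2\pi}$. For the upper bound, partition $[C_0,t]$ into the maximal intervals on which $F_X$ is monotone; on each of them $F_X$ meets a given level $\pi+2\pi k$ at most once, so
\begin{equation*}
N_X(t)\le\frac{1}{2\pi}\int_{C_0}^{t}|F_X'(u)|\,du+O(M)+O(X^2),
\end{equation*}
where $M$, the number of those intervals, is $1$ plus the number of zeros of $F_X'$ on $(C_0,t)$ (the $O(M)$ also absorbs the extra contribution of tangential zeros).

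To estimate the integral, \eqref{F'} gives $\int_{C_0}^{t}|F_X'(u)|\,du\le\int_{C_0}^{t}\log\tfrac{u}{2\pi}\,du+2\int_{C_0}^{t}|f_X'(u)|\,du+O(\log t)$, the first term being $t\log\tfrac{t}{2\pi}-t+O(1)$. By Cauchy--Schwarz and the mean value theorem for Dirichlet polynomials, together with $\sum_{n\le X^2}\Lambda(n)^2/n\ll\log^2X$ and $\sum_{n\le X^2}\Lambda(n)^2\ll X^2\log X$,
\begin{equation*}
\int_{C_0}^{t}|f_X'(u)|\,du\le\sqrt t\,\Bigl(\int_{0}^{t}|f_X'(u)|^2\,du\Bigr)^{1/2}\ll\sqrt t\,\bigl(t\log^2X+X^2\log X\bigr)^{1/2}\ll t\log X,
\end{equation*}
provided $X\le\sqrt t$. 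Hence $\int_{C_0}^{t}|F_X'|\ll t\log t$ when $X\le t^{1/2}$, and $\int_{C_0}^{t}|F_X'|=(1+o(1))\,t\log\tfrac{t}{2\pi}$ when $\log X/\log t=o(1)$.

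The remaining step, and the main obstacle, is to bound $M$. The function $F_X'(u)=\log\tfrac{u}{2\pi}-2f_X'(u)+O(1/\tau)$ is not a trigonometric polynomial, but it is the sum of a slowly varying analytic function and an exponential sum with $\ll X^2$ terms all of whose frequencies $\pm\log n$ ($n\le X^2$) lie in $[-2\log X,2\log X]$; continuing $F_X'$ into a thin horizontal strip, on which the logarithm and the Stirling error term are analytic, and applying Jensen's formula over a covering of $[C_0,t]$ by unit disks yields $M\ll t\log X+X^2$. Thus $M\ll t\log t$ when $X\le t^{1/2}$ and $M=o(t\log t)$ when $\log X/\log t=o(1)$. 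Taking $C_4=\tfrac12$ and collecting the estimates gives $N_X(t)\ll t\log t$ in the first case, and in the second, together with the lower bound, $N_X(t)=(1+o(1))\tfrac{t}{2\pi}\log\tfrac{t}{2\pi}$. Making the bound for $M$ rigorous is where the real work lies: the Jensen argument must be arranged so that $|F_X'|$ is not too small at enough points of the covering, which is the delicate point.
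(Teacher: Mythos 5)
Your overall strategy matches the paper's: reduce to counting solutions of $F_X(u)\equiv\pi\pmod{2\pi}$ on $[C_0,t]$, split the count into an "integral of $|F_X'|$" piece plus a piece controlled by the number of sign changes of $F_X'$, bound the integral via Cauchy--Schwarz and the mean value theorem for Dirichlet polynomials, and get the lower bound from Theorem~\ref{lower bd on zeros} together with the trivial bound $|f_X(t)|\ll X/\log X$. That part is correct and is essentially the paper's argument (the paper phrases the split as "zeros of the first kind" versus "zeros of the second kind," but it is the same decomposition). You have also correctly located the hard step: bounding $M$, the number of zeros of $F_X'$ on $[C_0,t]$.

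The gap is precisely there, and your sketch of how to close it does not work as written. Covering $[C_0,t]$ by \emph{unit} disks and applying Jensen in each requires, for essentially every unit interval, a point where $|F_X'|$ is bounded below; the mean-square estimate $\int_t^{2t}|f_X'|^2\ll t\log^2X$ only shows that the exceptional set where $|f_X'(u)|$ is close to $\tfrac12\log(t/2\pi)$ has measure $\ll t\log^2X/\log^2 t$, which is small compared to $t$ but says nothing about any individual unit interval. The paper instead works with a single disk of radius $\asymp t$ per dyadic segment $[t,2t]$: the measure bound guarantees one good center $t_0$ in that segment with $|G_X(\tfrac12+it_0)|>2\delta\log(t/2\pi)$, the maximum of the analytic continuation $G_X$ on the big disk is $\ll X^{t+1/2}$, and Jensen then gives $\ll t\log X$ zeros there; summing over the dyadic segments gives $M\ll t\log X$. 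Your version, with unit disks, cannot be rescued by the same measure bound, so this step needs to be reorganized along the paper's lines. A secondary omission is the bookkeeping of multiplicities: the paper checks explicitly that a zero of $\zeta_X(\tfrac12+it)$ of multiplicity $m$ is counted with weight at least $m$ by $N_I(t)+N_{II}(t)$, using the chain $F_X^{(k)}(t)=i^{k-1}G_X^{(k-1)}(\tfrac12+it)$; your "$+O(M)$ absorbs tangential zeros" is in the right spirit but should be spelled out along those lines.
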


\begin{proof}
There are two ways  solutions to $F_X(t)  \equiv \pi (mod\, 2\pi)$
may arise, and we  shall refer to the zeros of $\zeta_X(s)$ corresponding 
to these two ways as zeros of the ``first'' and ``second'' kind. 

The first way  is  by $F_X(t)$ increasing or decreasing from one   
odd multiple of $\pi$, say $(2k+1)\pi $, 
to  the next larger or smaller odd multiple
of $\pi$, without first crossing $(2k+1)\pi $
again.  A moment's reflection reveals that 
the total number of distinct zeros  in $[C_0,\, t]$ arising this way is 
big-$O$  of  the total variation of
$ F_X(t) $, namely,
\begin{equation*}
\frac{1}{2\pi} \int_{C_0}^{t}  |F_{X}^{'}(u)| \;du \;.
\end{equation*}
By \eqref{F'} and the triangle and Cauchy-Schwarz inequalities, this is 
\begin{align*} 
 &\leq \frac{1}{2\pi} \int_{C_0}^{t}  
 \left| \log \frac{u}{2\pi} - 2\sum_{n\leq X^2} 
  \frac{\Lambda_X(n) \cos(u \log n)}{\sqrt{n}  }  \right| \;du
 +O(1) \\
& \leq  \frac{t}{2\pi} \log \frac{t}{2\pi} -  \frac{t}{2\pi} 
+ \frac{1}{\pi} t^{1/2} \left(\int_{C_0}^{t} \left| \sum_{n\leq X^2}   
\frac{\Lambda_X(n)\cos(u \log n) }{\sqrt{n} } \right|^2  \;du \right)^{1/2}
\end{align*}
By a standard mean value theorem  for Dirichlet polynomials it  is easy 
to show that if $X \ll t^{1/2}$,   the integral is $\ll \,t\, \log^2 X \,.$
Thus, writing $N_{I}(t)$ for the number of \emph{distinct} zeros that  
occur in this way, we have
$$
N_{I}(t) \leq  \frac{t}{2\pi} \log \frac{t}{2\pi} - \frac{t}{2\pi} + O(t\, \log X) \,.
$$
We will see how to take multiplicities into account below.

The second way solutions to $F_X(t)  \equiv \pi (mod\, 2\pi)$
can occur is by 
$F_X(t)$ increasing or decreasing from a  
solution $(2k+1)\pi $ and returning to this value before  
reaching the next larger or smaller odd multiple
of $\pi$. Each time this happens, there must be at least one point in between
where $F_X^{'}(t)$ vanishes. Thus, writing $N_{II}(t)$ for the number of 
\emph{distinct} zeros of $\zeta_X(s)$ arising this way, we see that 
$N_{II}(t)$  is at most big-$O$ of the number of times
$F_X^{'}(t)$ vanishes on $[C_0, t]$. 
To estimate this number we define   functions
\begin{equation*}
g_X(s) =  -\frac{\chi^{'}}{\chi}(s) - 2 \sum_{n \leq X^2} 
\frac{\Lambda_X(n)}{n^s}
\end{equation*}
and 
\begin{equation*}
G_X(s) = \frac12 \left( g_X(s) +  g_X(1-s) \right)\,.
\end{equation*}
Here we use the principal branch of logarithm on
the complex plane with the negative real axis removed.
By \eqref{arg formula 1}, $F_X^{'}(t)= G_X(\frac12+it)$, so the zeros of $F_X^{'}(t)$ on 
$[C_0, t]$ are  the zeros of $G_X(s)$ on $[\frac12+iC_0, \frac12+i t]$. 
We   bound this number by bounding
the  number of   zeros on each of the segments   $[\frac12+i   t, \frac12+ 2i t], 
[\frac12+i \frac t2, \frac12+i t], ... $, and adding. The number of zeros on any one of these
is at most the number of zeros  of $G_X(s)$ in a disk  containing the segment.  
By a familiar result  from complex analysis,   if $\D$ is a closed disk of radius $R$ centered at 
$z_0$, $f(z)$ is analytic on $\D$ with maximum modulus $M$, and  $f(z_0) \neq 0$, then there is an
absolute constant $c$ such that  $f$ has $\leq c \log(M/|f(z_0)|)$ zeros in the disc of radius
$\frac23 R$ centered at $z_0$. To apply this to the segment $[\frac12+i   t, \frac12+ 2i t]$, say,
we need a disc containing it, the maximum   of $|G_X(s)|$ on this disk, and a lower 
bound for   $|G_X|$ at the center of the disk.  We  handle the last problem first by
selecting as center a point at which we know $|G_X(s)|$ cannot be too small. The 
upper bound for $N_{II}(t)$ will follow by repeating this process for each of the segments 
and  adding the resulting estimates.

To show that one can find a satisfactory center,  fix a $\delta$ with $0< \delta <\frac12$ and set 
\begin{equation*}
\mathcal{E} (t) = \left\{ u \in[t, 2t] :  (\tfrac12 +\delta)\log\frac{t}{2\pi}\geq 
\left| f_X^{'}( u) \right|
  \geq   (\tfrac12 -\delta)\log\frac{t}{2\pi} \right\}\,.
\end{equation*}
Then  $|F_X^{'}(u)| = |G_X(\tfrac12+iu)| \leq 2 \delta \log( t/2\pi) +O(1/t)$ for all $u \in \mathcal{E} (t)$.
Now  recall  that
\begin{equation*}
 \int_{t }^{2t} \left|   f_X^{'}( u)    \right|^2  \;du
= \int_{t }^{2t} \left| \sum_{n\leq X^2}   
\frac{\Lambda_X(n)\cos(u \log n) }{\sqrt{n} } \right|^2  \;du
\ll t\log^2 X\,.
\end{equation*}
Thus, the measure of $\mathcal{E} (t)$ is  
\begin{align*}
 | \mathcal{E} (t)| = \int_{\mathcal{E} (t)} 1 \;dt \leq &
  \int_{\mathcal{E} (t)} \left( \frac{ | f_X^{'}(u) |}
  { (\tfrac12 - \delta)\log\frac{t}{2\pi} } \right)^2\;dt  \\
  \ll & \frac{t\log^2 X}{\log^{ 2} t } \,. 
 \end{align*}
It follows that there exists a constant $C_4 >0$ such that 
if $X \leq t^{C_4 }$, then $ | \mathcal{E} (t)| < \frac16 t$. 
Since the segment
$[\frac12+i (\frac32-\frac{1}{12}) t, \frac12+  i (\frac32+\frac{1}{12}) t]$
has greater length than  the set $\mathcal{E}(t)$, it  contains a point
$\frac12+it_0$ with $t_0$ not in $ \mathcal{E}(t)$, and therefore   with
$|G_X(\tfrac12+it_0)| > 2 \delta \log( t/2\pi) $.

We now let $\mathcal{D}_0(t)$ be the  closed disc of radius $t$ centered at
  $\frac12+it_0$,  and let $M$ denote the maximum of 
$|G_X(s)|$ on $\mathcal{D}_0(t)$. 
Clearly  on $\mathcal{D}_0(t)$ we have
\begin{equation*}
G_X(\s+it) \ll \log t + \sum_{n\leq X^2} 
\Lambda_X(n) n^{t-1/2} \ll X^{t+1/2}\,.
\end{equation*}
Hence, by the theorem alluded to above $G_X(s)$, has $\ll   \log(X^{t+1/2}/\delta \log t) 
\ll t\log X$   zeros inside the smaller disc $\mathcal{D}_0(\frac23 t)$ of radius
$\frac23 t$, which covers $[\frac12+i   t, \frac12+ 2i t]$. Adding estimates 
for the different intervals,  we  arrive at $\ll t\log X$ distinct zeros of $F_X^{'}(t)$.
The same bound  therefore holds for the number of distinct zeros of the second kind.

Combining the two  ways the solutions of $F_X(t)\equiv \pi \pmod{2\pi}$, or zeros of 
$\zeta_X(s)$ arise, we find that for $X \leq t^{C_4}$ there are  
\begin{equation*}
N_{I}(t) + N_{II}(t) \leq  (t/ 2\pi) \log(t/ 2\pi) - (t/ 2\pi) + O(t\, \log X)
\end{equation*}
distinct zeros.

Now, a zero $\tfrac12+i\g_X$ of $\zeta_X(\tfrac12+it)$ has multiplicity $m$ if and only if
  the first $m-1$ derivatives of  $\zeta_X(\tfrac12+it )$ with respect to $t$ vanish at $\g_X$, 
  but the $m^{th}$ does not.
It is easy to check that this is equivalent to $F_X(\g_X) \equiv \pi \pmod{2\pi},
 F_X^{'}(\g_X)= ... = F_X^{(m-1)}(\g_X) = 0$, and $F_X^{(m)}(\g_X) \neq 0$.  
Also note that our estimate for the number of zeros of the analytic function $G_X(s)$ 
counts them according to their multiplicities, and  that
$F_X^{'}(t) =  G_X(\frac12+it), F_X^{(2)}(t) =  i G_X^{(1)}(\frac12+it), 
\ldots,  F_X^{(m)}(t) =  i^{m-1}G_X^{(m-1)}(\frac12+it)$. 

Suppose then that $\tfrac12+i\g_X$ is  a zero of $\zeta_X(s)$ of the first kind and multiplicity 
$m$.  Then it is counted once in $N_{I}(t)$. Also, since the first  $m-1$
derivatives   of $F_X(t)$ vanish at $\g_X$, so does $G_X(\tfrac12+it)$ and its first $m-2$
derivatives. Thus, $\tfrac12+i\g_X$ is counted another $m-1$ times in $N_{II}(t)$, and 
therefore with the correct multiplicity in $N_{I}(t) + N_{II}(t)$.

Next suppose that  $\tfrac12+i\g_X$ is a zero of multiplicity $m$ of the second kind. 
Then it is counted  at least once in $N_{II}(t)$ because  $F_X^{'}(t) = G_X(\tfrac12+it)$ 
vanishes at a nearby point. Also, at $\g_X$ itself we  have $F_X^{'}(\g_X)= \cdots = 
F_X^{m-1}(\g_X)=0$, and $F_X^{m}(\g_X) \neq 0$. This means that $G_X(s)$ and its
first $m-2$ derivatives are zero at $\tfrac12+i\g_X$, so this point is counted $m-1$
times by $N_{II}(t)$. Thus, zeros of the second kind with multiplicity $m$ are counted
with weight at least $m$ in $N_{II}(t)$.   
 
We now see that 
$$
N_{X}(t) \leq  \frac{t}{2\pi} \log \frac{t}{2\pi} - \frac{t}{2\pi} + O(t\, \log X) \,.
$$

Both assertions of the theorem now follow from this and the lower bound in \eqref{lower bd}. 
\end{proof}

\section{The number of simple zeros of $\zeta_X(s)$}  

We saw in the last section  that a zero $\tfrac12+i\g_X$ of $\zeta_X(s)$ is 
simple if and only if  $F_X(\g_X) \equiv \pi \pmod{2\pi}$ and $F_X^{'}(\g_X) \neq 0$. 
Let $ N_X^{(1)}(t)$ denote the number of such zeros up to height $t$.
From  \eqref{F'} and \eqref{pos deriv}
we see that  $F_X^{'}(\g_X) > 0$ if 
 $X$ is not too large, and therefore that $\tfrac12+i\g_X$ is a simple
 zero of $\zeta_X(s)$. Combining this with Theorem~\ref{exact zero count}, we obtain
 
  \begin{thm}\label{simple zeros} Assume the Riemann Hypothesis.
 There exists a constant $C_3>0$ such that  if $X <  \exp\big(C_3 \log t /  \Phi(t) \big)$,
 then all the zeros  of $\zeta_X(\tfrac12+it)$ with $t \geq C_0$
 are simple and
$$
N_X^{(1)}(t)= N_X(t) = 
\frac{t}{2\pi} \log \frac{t}{2\pi} -\frac{t}{2\pi}  
 - \frac{1}{\pi}\sum_{n\leq X^2}\frac{\Lambda_X(n) \sin(t\log n)}
 {n^{\frac12}\log n} +O_X(1)\,. 
 $$
\end{thm}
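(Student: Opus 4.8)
The statement combines two facts: first, that under the stated size restriction on $X$ every high zero of $\zeta_X(s)$ on the critical line is simple; second, that consequently the simple-zero count $N_X^{(1)}(t)$ coincides with the full zero count $N_X(t)$, for which Theorem~\ref{exact zero count} already supplies the asymptotic formula. So the only thing to establish is the simplicity claim, and then invoke Theorem~\ref{exact zero count} verbatim for the displayed formula. The key characterization, already recorded in the excerpt, is that a zero $\tfrac12+i\gamma_X$ of $\zeta_X(s)$ (with $\gamma_X \geq C_0$) is simple precisely when $F_X(\gamma_X)\equiv \pi \pmod{2\pi}$ \emph{and} $F_X'(\gamma_X)\neq 0$. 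Since every zero on the line with ordinate $\geq C_0$ automatically satisfies the congruence, simplicity reduces entirely to showing $F_X'(\gamma_X)\neq 0$ at such points.

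\textbf{Key steps.} First I would recall the formula \eqref{F'} for $F_X'(t)$, namely
\[
F_X'(t) = \log\frac{t}{2\pi} - 2\sum_{n\leq X^2}\frac{\Lambda_X(n)\cos(t\log n)}{n^{1/2}} + O\!\left(\frac{1}{\t}\right).
\]
Next I would quote Theorem~\ref{Bound on real part sum}, which gives, on the Riemann Hypothesis and for $2\leq X\leq t^2$, the bound
\[
\sum_{n\leq X^2}\frac{\Lambda_X(n)\cos(t\log n)}{n^{1/2}} \ll \Phi(\t)\log X + \frac{\log\t}{\log X}.
\]
Then the heart of the matter: when $X < \exp\!\big(C_3\log t/\Phi(t)\big)$, the first term $\Phi(\t)\log X$ is $\ll C_3 \log t$, and by choosing $C_3$ small enough this is strictly less than $\tfrac14\log(t/2\pi)$ for $t$ large; the second term $\log\t/\log X$ is also $o(\log t)$ in this range (indeed $\log X \geq 1$ suffices for it to be $\ll \log t$, and one sharpens as needed). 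This is exactly the content of \eqref{pos deriv}: there is a $C_3>0$ such that for $X<\exp(C_3\log t/\Phi(t))$ and $t$ large,
\[
\sum_{n\leq X^2}\frac{\Lambda_X(n)\cos(t\log n)}{n^{1/2}} < \frac12\log\frac{t}{2\pi}.
\]
Feeding this into \eqref{F'} gives $F_X'(t) > 0$ for all large $t$, hence in particular $F_X'(\gamma_X) \neq 0$ at every zero $\tfrac12+i\gamma_X$ with $\gamma_X$ exceeding some absolute threshold; the finitely many zeros between $C_0$ and that threshold can be absorbed into the $O_X(1)$ term (or handled by enlarging $C_0$ if one prefers, but since the theorem only asserts simplicity for $t\geq C_0$ one must check these separately — in fact the positivity of $F_X'$ for \emph{all} large $t$ combined with a direct argument shows the claim down to $C_0$, or one simply notes that any non-simple low zero would still leave the asymptotic untouched).

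\textbf{Main obstacle.} There is no genuine obstacle here — this theorem is essentially a corollary, packaging together Theorem~\ref{exact zero count} and the positivity estimate \eqref{pos deriv} already derived. The one point requiring a little care is the range of $t$: the bound $F_X'(t)>0$ is only claimed for $t$ sufficiently large, so strictly speaking simplicity is established only above some effective height, and one must argue that the remaining zeros with $C_0 \leq \gamma_X \leq (\text{threshold})$ are finite in number and hence do not affect either the stated simplicity conclusion (which, read carefully, should be understood with the same $O_X(1)$ latitude as the counting formula) or the asymptotic. Once $F_X'(\gamma_X)\neq 0$ is in hand for all high zeros, the equality $N_X^{(1)}(t) = N_X(t)$ is immediate since every zero counted by $N_X(t)$ above the threshold is simple, and the explicit formula for $N_X(t)$ is transcribed directly from Theorem~\ref{exact zero count}.
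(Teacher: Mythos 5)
Your proof follows the paper's argument exactly: characterize simplicity by $F_X(\gamma_X)\equiv\pi\pmod{2\pi}$ together with $F_X'(\gamma_X)\neq 0$, deduce $F_X'(t)>0$ from \eqref{F'}, Theorem~\ref{Bound on real part sum}, and \eqref{pos deriv} under the stated restriction on $X$, and then cite Theorem~\ref{exact zero count} for the counting formula. Your caveat about the $t$-range is fair and in fact reflects a small imprecision in the paper itself — \eqref{pos deriv} only gives positivity of $F_X'$ for $t$ sufficiently large, so ``all zeros with $t\geq C_0$'' should be read with the same $O_X(1)$ latitude as the counting formula, exactly as you say.
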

 
As in our results for  $N_X(t)$,  the condition on $X$ is  almost certainly too restrictive. 
The following unconditional but less precise result  is valid for larger $X$.

\begin{thm}\label{simple zeros 2} Let
$\e >0$ and $X \leq \exp(o(\log^{1-\e} t))$. 
Then as $t \to \infty $, the number of simple zeros up to height 
$t$ is 
$$
 N_X^{(1)}(t) = \left(1 + o(1)\right) \;\frac {t}{2\pi} \log \frac{t}{2\pi}  \,.
 $$
\end{thm}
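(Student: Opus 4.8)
The plan is to show that the non-simple zeros of $\zeta_X(s)$ are too few to matter, so that $N_X^{(1)}(t)\sim N_X(t)$, and then to quote Theorem~\ref{no. of zeros}. The upper bound $N_X^{(1)}(t)\le N_X(t)$ is trivial, and since $X\le \exp(o(\log^{1-\e}t))$ forces $\log X/\log t=o(1)$, Theorem~\ref{no. of zeros} gives $N_X(t)=(1+o(1))\tfrac{t}{2\pi}\log\tfrac{t}{2\pi}$. Everything therefore reduces to bounding the deficit $N_X(t)-N_X^{(1)}(t)$ from above.

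Write $m(\g_X)\ge 1$ for the multiplicity of a zero $\tfrac12+i\g_X$ of $\zeta_X(s)$, and recall that $N_X(t)$ counts with multiplicity while $N_X^{(1)}(t)$ counts only the simple zeros. Then $N_X(t)-N_X^{(1)}(t)=\sum_{m(\g_X)\ge 2}m(\g_X)\le 2\sum_{m(\g_X)\ge 2}\bigl(m(\g_X)-1\bigr)$, the inequality because $m\le 2(m-1)$ for $m\ge 2$. Next I would use the characterization established in the proof of Theorem~\ref{no. of zeros}: for $t\ge C_0$, the point $\tfrac12+i\g_X$ is a zero of $\zeta_X$ of multiplicity $m$ exactly when $F_X(\g_X)\equiv\pi\pmod{2\pi}$ and $F_X^{(1)}(\g_X)=\cdots=F_X^{(m-1)}(\g_X)=0\ne F_X^{(m)}(\g_X)$. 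Since $F_X^{(k+1)}(t)=i^{\,k}G_X^{(k)}(\tfrac12+it)$, this says precisely that the analytic function $G_X(s)$ has a zero of multiplicity $m-1$ at $\tfrac12+i\g_X$. Consequently $\sum_{m(\g_X)\ge 2}\bigl(m(\g_X)-1\bigr)$ is at most the number of zeros of $G_X(s)$ on the segment $[\tfrac12+iC_0,\tfrac12+it]$, counted with multiplicity; and that number was already shown, in the proof of Theorem~\ref{no. of zeros} (by covering the segment with discs and applying the Jensen-type zero count, which is valid once $X\le t^{C_4}$), to be $\ll t\log X$.

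Combining these, $N_X(t)-N_X^{(1)}(t)\ll t\log X$. The hypothesis $X\le\exp(o(\log^{1-\e}t))$ gives $\log X=o(\log^{1-\e}t)=o(\log t)$, so $t\log X=o(t\log t)=o\bigl(N_X(t)\bigr)$; hence $N_X^{(1)}(t)=(1+o(1))N_X(t)=(1+o(1))\tfrac{t}{2\pi}\log\tfrac{t}{2\pi}$, which is the assertion. The finitely many zeros of ordinate below $C_0$ affect only the $o(1)$.

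The point to stress is that no genuinely new analytic estimate is needed: the single substantial ingredient—that $F_X'$, equivalently $G_X(\tfrac12+it)$, has $\ll t\log X$ zeros up to height $t$—is exactly the disc-covering estimate already carried out for Theorem~\ref{no. of zeros}. The only step requiring care, and the place I expect the only (minor) friction, is the multiplicity bookkeeping: correctly translating ``$\zeta_X$ has a zero of order $m$ at $\tfrac12+i\g_X$'' into ``$G_X$ has a zero of order $m-1$ there'', and verifying that it is the total excess multiplicity $\sum_{m(\g_X)\ge 2}(m(\g_X)-1)$ that controls $N_X(t)-N_X^{(1)}(t)$.
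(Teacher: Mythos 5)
Your proposal is correct, but it takes a genuinely different route from the paper's. The paper bounds the number of multiple zeros by a moment argument: it splits the multiple zeros into $K\ll\log t$ well-spaced (1-separated) sets, and applies Davenport's large-values/mean-value theorem for Dirichlet polynomials to the $2k$-th power of $\sum_{n\le X^2}\Lambda_X(n)n^{-1/2-i\g_r}$, obtaining $N^*\ll_k t\,\log^{2k+2}X/\log^{2k-1}t$. That is $o(t\log t)$ only when $\log X=o(\log^{1-1/(k+1)}t)$, which is where the hypothesis $X\le\exp(o(\log^{1-\e}t))$ comes from (taking $k$ with $1/(k+1)<\e$). You instead re-use the disc-covering/Jensen zero count for the analytic function $G_X(s)$ already carried out inside Theorem~\ref{no. of zeros}: that bound is $\ll t\log X$ zeros, explicitly counted with multiplicity, on $[\tfrac12+iC_0,\tfrac12+it]$ for $X\le t^{C_4}$; your multiplicity bookkeeping (that a zero of $\zeta_X(\tfrac12+it)$ of order $m$ forces a zero of $G_X$ of exact order $m-1$, and $m\le 2(m-1)$ for $m\ge 2$) correctly translates this into $N_X(t)-N_X^{(1)}(t)\ll t\log X$. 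The payoff of your approach is that it actually yields the conclusion under the weaker hypothesis $\log X=o(\log t)$, matching the range in which Theorem~\ref{no. of zeros} gives $N_X(t)\sim(t/2\pi)\log(t/2\pi)$, rather than the more restrictive $\log X=o(\log^{1-\e}t)$; in other words, it would strengthen the theorem as stated. Two small things to be explicit about if you write this up: (i) the disc-covering estimate, applied dyadically, degenerates for small $u$ (one needs $X\le u^{C_4}$), so the segment $[\tfrac12+iC_0,\tfrac12+iX^{1/C_4}]$ should be handled separately with a single larger disc, contributing $\ll X^{1/C_4}\log X=o(t\log X)$ — the paper glosses over the same point; (ii) the $O_X(1)$ low-lying zeros below $C_0$ must be checked to be $o(t\log t)$ as $X=X(t)$ grows, which is fine since their number is at most $O(X\log X)$.
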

  
\begin{proof} Let $N$ be the number of zeros of $\zeta_X(\frac12+iu)$
in $[t, 2t]$ and $N^{*}$ the number of these that are multiple.
By Theorem~\ref{no. of zeros}, there is a constant $C_4$ such that if 
$X \leq t^{C_4}$, then $N \ll t \log t$. We may therefore split the  
$N^{*}$ multiple zeros into  $K \ll \log t$ sets $\mathcal{S}_1, \mathcal{S}_2,
 ..., \mathcal{S}_K $, in each of which the points are at least $1$ apart. 
Let $\mathcal{S}$ be one of these sets and let $\g_1, \g_2, ... \g_R$ be its points.  
Then these must all satisfy
\begin{equation*}\label{F' again}
0= F_{X}^{'}(\g_r) = \log \frac{\g_r}{2\pi} - 2\sum_{n\leq X^2} 
  \frac{\Lambda_X(n) \cos(\g_r \log n)}{n^{\frac12}  } 
  + O\left(\frac{1}{\g_r}\right)\,.
\end{equation*}
Writing
\begin{equation*}
 \sum_{n\leq X^{2k}}  \frac{A_X(n)}{n^{\frac12+iu}  } =
\left(  \sum_{n\leq X^2} \frac{\Lambda_X(n)}{n^{\frac12+iu}  } \right)^k  \,,
\end{equation*}
we have  by a mean value theorem of Davenport (Montgomery~\cite{M1}) 
\begin{equation*} 
\sum_{r=1}^{R} \left|  \sum_{n\leq X^2} 
  \frac{\Lambda_X(n) }{n^{\frac12+i \g_r}  }  \right|^{2k}
  \ll \left( t + X^{2k}  \log (X^{2k})\right) \log (X^{2k}) \sum_{n\leq X^{2k}} 
   \frac{|A_X(n)|^2}{n}\,.
\end{equation*}
It is not difficult to show that the sum on the right is
$\ll_{2k}  \log^{2k} X$ so,  if $X \leq t^{1/2k}$, 
the   right-hand side is $\ll_k  t \log^{2k+2} X$. 
(We also require $X \leq t^{C_4}$,  so we assume that
$k \geq C_{4}/2$.)  
On the other hand, by \eqref{F' again} the left-hand side must be $ \gg R \log^{2k} t$. 
Therefore $|\mathcal{S}| = R \ll_{k} t (\log^{2k+2} X /  \log^{2k} t)$. There are  
$K \ll \log t$\;  sets $\mathcal{S}_k$, so the total possible number 
of multiple zeros is  $N^{*} \ll _{k} t (\log^{2k+2} X /  \log^{2k-1}t)$.  
This is $o_k (t \log t)$ if $X \leq \exp\left(o\left(\left(\log t\right)^{1-1/(k+1)}\right)\right)$. Taking
$k$ large enough so that $1/(k+1) < \e$, we obtain the result.
\end{proof}


\section{The relative sizes of $\zeta_X(s)$ and $\zeta(s)$ and the
relation between their zeros} 

Although we have not proved that $\zeta_X(s)$ 
approximates  $\zeta(s)$  pointwise when $\s$ is very close   
to $\frac12$,  the similarity between
the formulae for $N_X(t)$ and $N(t)$ suggests there might 
be a close relationship between the two 
functions even on the critical line. Indeed, comparing the  
graphs  of   $|\zeta_X(\frac12+it)|$   and $|\zeta(\frac12+it)|$  
for a wide  range  of $X$ and $t$  (see  Figures~\ref{function} and \ref{fnc}), 
one is struck by two things:   
\newpage
\begin{enumerate}
\item  the   zeros of $\zeta_X(\frac12+it)$ are quite close 
to those of $\zeta(\frac12+it)$, even for relatively small $X$, and  \\
 \item   \noindent as $X$   increases,   $|\zeta_X(\frac12+it)|$ seems  
to approach  $2 |\zeta(\frac12+it)|$.
 \end{enumerate}

\begin{figure}\label{function}
\centering 
\includegraphics[width=6.5in]{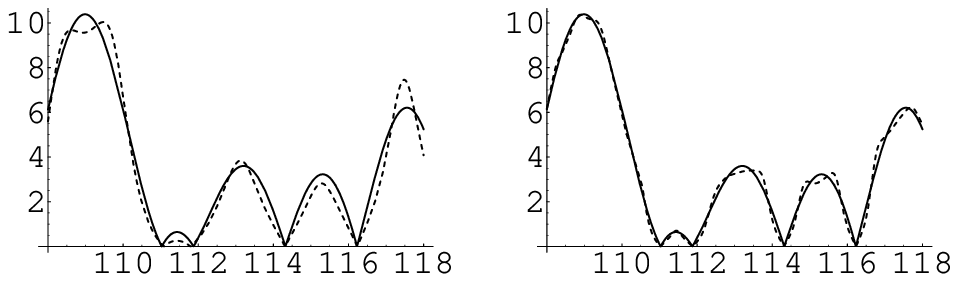}
\caption{Graphs of $2|\zeta(\frac12+it)|$ (solid) and 
$  |\zeta_X(\frac12+it)|$ (dotted) near $t=114$ for $X=10$ 
and $X=300$, respectively.}\label{function}
\centering 
\includegraphics[width=6.5in]{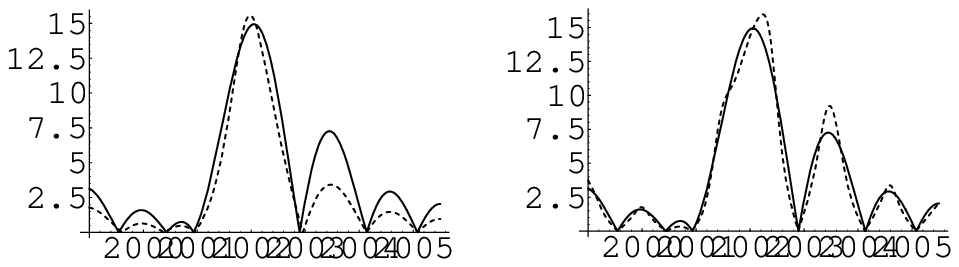}
\caption{Graphs of $2 |\zeta(\frac12+it)|$ (solid) and 
$  |\zeta_X(\frac12+it)|$ (dotted) near $t=2000$ for $X=10$ 
and $X=300$, respectively.}\label{fnc}
\end{figure}

An explanation for the second observation  is that  
although $\zeta_X(s)$ approximates $\zeta(s)$ to the right of the
critical line, so does $P_X(s)$. Therefore 
$\zeta_X(s)= P_X(s)+\chi(s) P_X(\overline{s})$ might be a closer 
approximation to the function   
$
\F(s) =  \zeta(s) +\chi(s) \zeta(\overline{s})  
$
than  to $\zeta(s)$.  If  this is the case, then
on the critical line we have by the functional equation   that
\begin{align*}
\zeta_X(\tfrac12+it) \approx\F(\tfrac12+it) = &\zeta(\tfrac12+it) +\chi(\tfrac12+it) \zeta(\tfrac12-it) \\
 = &\zeta(\tfrac12+it) +|\chi(\tfrac12+it)|^2 \zeta(\tfrac12+it) \\
 = & 2 \,\zeta(\tfrac12+it) \,.
\end{align*}

To establish both observations rigorously we need to introduce
a slightly  modified version of $\zeta_X(s)$.
Let
\begin{equation*}
P_X^{*}(s) = P_X(s) \exp\left(-F_2\left((s-1)\log X \right)\right)
\end{equation*}
and define
\begin{equation*}\label{P*}
\zeta_X^{*}(s) = P_X^{*}(s) + \chi(s) P_X^{*}(\overline{s}) \,.
\end{equation*}
Note that by  \eqref{F_2 of pole}, 
\begin{equation*}
P_X^{*}(s) = P_X(s)  \exp \left( O\left(  \frac{X^{2-2\s}}{\t^2  \log^2 X}  \right)  \right)
\end{equation*}
when $\s \geq  \frac12$ and $|s-1| \geq \frac1{10}$. The difference between 
$P_X(s)$ and $P_X^*(s)$, and so also $\zeta_X(s)$ and $\zeta_X^*(s)$, is 
 small  when $2 \leq X \leq t^2$, as we have been assuming till now.  
We  need to take $X$ much larger, though,    in what follows. 
Similarly, we replace  $F_X(t)$ by  
\begin{align}\label{F_X^*}
F_X^{*}(t) = & -\arg\chi(\tfrac12+ i t) + 2 \arg P_X^{*}( \tfrac12+ i t)    \notag \\ 
=& -  \arg\chi(\tfrac12+ i t) + 
2\bigg( \arg P_X(\tfrac12+it)  -  \Im \,F_2\left( (-\tfrac12 + i t ) \log X \right)   \bigg) \,.
\end{align}
By  \eqref{arg formula 1}    and  \eqref{F_2 of pole}   
$$
F_X^{*}(t)  = F_X(t) + O\left(  \frac{X}{\t^2 \log X}  \right)\,,
$$ 
so  these  two functions are also close when  $X \leq t^2$.
The zeros of $\zeta_X^*(\frac12+it)$ are the solutions of $F_X^{*}(t)
\equiv \pi \pmod{2\pi }$ and we will show that (1) and (2) above hold 
provided we use $\zeta_X^*(\frac12+it)$ in place
of $\zeta_X(\frac12+it)$.

Assume  the Riemann Hypothesis is true.  Taking the  argument  of both sides of  
\eqref{zeta form 1} and recalling  that $S(t) =(1/\pi) \arg \zeta(\frac12+it)$, we see that
\begin{equation}
\begin{aligned}\label{S(t) 2}    
\pi S(t) = \arg P_X(\tfrac12+it) 
 - & \Im \,F_2\left((-\tfrac12 + i t ) \log X \right)      
 +  \sum_{\g}\Im \;F_2 \left(i(t-\g)\log X \right)    \\     
    & + O\left(  \frac{X^{-\frac32}}{\t^2 \log X}  \right)\,, 
\end{aligned}
\end{equation}
where $\g$ runs through the ordinates of the zeros of $\zeta(s)$.
We use this to replace the quantity in parentheses in \eqref{F_X^*} and obtain
\begin{align*}\label{F* 10} 
F_X^{*}(t)  = -  \arg\chi(\tfrac12+ i t) + 
2  \pi S(t) - 2 \,\Im \sum_{\g} F_2 \left(i(t-\g)\log X \right)         
    + O\left(  \frac{X^{-\frac32}}{\t^2 \log X}  \right)   
\,.
\end{align*}
Now, by \eqref{N(t) 2} 
$$
-  \arg\chi(\tfrac12+ i t) + 2  \pi S(t) =2\pi N(t) -2\pi,
$$
thus
\begin{equation}\label{congruence 2} 
\frac{1}{2\pi}F_X^{*}(t)  =    N(t) -1- \frac{1}{\pi} \;\Im \sum_{\g} F_2 \left(i(t-\g)\log X \right)   
 + O\left(  \frac{X^{-\frac32}}{\t^2 \log X}  \right)   \,.
\end{equation}
 
We  use this first to show  that the zeros of $\zeta_X^*(s)$ cluster 
around the zeros of $\zeta(s)$
as $X \to \infty$. 
Let $  \g$ and $ \g^{'}$ denote   ordinates of distinct consecutive zeros 
of $\zeta(s)$, and set $\Delta = | \g  - \g^{'} |$. 
Also, fix an $\e$ with $0<\e <1/4$ and  let $\mathcal{I}=
[\g  +\e \Delta,\,\g^{'}  - \e \Delta]$.
Then by \eqref{F_2 formula 2} and Lemma~\ref{Zero Sum},  
 if $X \geq \exp(1/\e \Delta)$    we have
\begin{align}\label{F_2 is small}
\sum_{\g}F_2 \left(i(t-\g)\log X \right)   
\ll & \frac{1}{\log^2X}\sum_{|\g-t| > \e \Delta} \frac{1}{ (t-\g)^2} \\ 
\ll & \frac{1}{\e \Delta \log^2X} \left(\log \t  + \frac{\Phi(\t)}{\e\Delta} \right)  \notag
\end{align}
 uniformly for $t \in \mathcal{I}$.
It now follows from \eqref{congruence 2}   that given any $\delta>0$, there exists an 
$X_0 = X_0(\g,  \e, \delta)$ such that if 
  $X \geq X_0$, then   
\begin{equation}\label{F* approx 10}              
\left|\left| \frac{F_X^{*}(t)}{2\pi}  - N(t) \right|\right|  < \delta    \,,
\end{equation}
uniformly for $t \in \mathcal{I} $. Here $|| x ||$ denotes distance to the nearest integer.
Since  $N(t)$ is an integer when $t \in \mathcal{I}$, this means that if $\delta <\frac12$,
then $\frac12+it$ is \emph{not} a zero of $\zeta_X^{*}(s)$. Thus $\mathcal{I}$ is free from
zeros of $\zeta_X^{*}(s)$ when $X$ is sufficiently large. 

Now we show that $|\zeta_X^{*}(\frac12+it)|$ tends to 
$2|\zeta(\frac12+it)|$ on $\mathcal{I}$. By   \eqref{P*}
and \eqref{F_X^*} we may write 
\begin{equation*}\label{zeta* form 10}
\zeta_X^{*}(\tfrac12+it) 
= P_X^{*}(\tfrac12+it) \left( 1+ e^{-i F_X^{*}(t)}  \right) .
\end{equation*}
Also, by Theorem~\ref{thm on zeta} and the definitions of $P^{*}_X$ and
$Z_X$, we have
\begin{equation*}\label{zeta form 10}
\zeta(s) = P_{X}^{*}(s) \exp\left(\sum_{\g}F_2 \left(i(t-\g)\log X \right)\right)
 \left(1+  O\left(  \frac{X^{-\frac52  }  }{\t^2 \log^2  X}  \right) \right)\,.
 \end{equation*}
 From the first of these we see that if \eqref{F* approx 10} 
 holds with $\delta$ sufficiently small, then   
 $$
|\zeta_X^{*}(\tfrac12+it) |
= |P_X^{*}(\tfrac12+it)| \big( 2+ O(\delta) \big)
$$
uniformly for $t \in \mathcal{I}$. From the second and \eqref{F_2 is small} we see that
if $X$ is large enough, then
$$
|\zeta(\tfrac12+it) |
= |P_X^{*}(\tfrac12+it)| \big( 1+ O(\delta) \big)
$$
on $\mathcal{I}$. Thus, $|\zeta_X^{*}(\tfrac12+it)| \to 2 |\zeta(\tfrac12+it)|$ 
as $X \to \infty$ uniformly for $t \in \mathcal{I}$.

Combining our results we now have 
\begin{thm} Assume the Riemann Hypothesis. Let $\g$ and $\g^{'}$ denote ordinates of distinct consecutive zeros of the 
Riemann zeta-function, and let $\mathcal{I}$ denote a closed subinterval of
 $(\g, \,\g^{'})$. Then for all $X$ sufficiently large  $\zeta_X^{*}(\frac12+it)$ has no
 zeros in $\mathcal{I}$. Moreover, $|\zeta_X^{*}(\tfrac12+it)| \to 2 |\zeta(\tfrac12+it)|$ 
as $X \to \infty$ uniformly for $t \in \mathcal{I}$.
 \end{thm}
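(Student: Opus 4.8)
The plan is to extract everything from the factorization $\zeta_X^{*}(\tfrac12+\i t) = P_X^{*}(\tfrac12+\i t)\bigl(1 + e^{-\i F_X^{*}(t)}\bigr)$ recorded above, so that both the zero set of $\zeta_X^{*}$ on the critical line and the size of $|\zeta_X^{*}(\tfrac12+\i t)|$ are governed solely by the real-valued function $F_X^{*}(t)$. The engine is formula \eqref{congruence 2}, obtained by taking arguments in Theorem~\ref{thm on zeta} and using \eqref{N(t) 2},
\[
\tfrac{1}{2\pi}F_X^{*}(t) = N(t) - 1 - \tfrac{1}{\pi}\,\Im\sum_{\g}F_2\bigl(\i(t-\g)\log X\bigr) + O\!\left(\frac{X^{-3/2}}{\t^2\log X}\right).
\]
First I would, given the arbitrary closed subinterval $\mathcal{I}\subset(\g,\g')$, choose $\e$ with $0<\e<\tfrac14$ small enough that $\mathcal{I}\subseteq[\g+\e\Delta,\ \g'-\e\Delta]$ where $\Delta=|\g-\g'|$; since $\g,\g'$ are consecutive, every ordinate of a zero of $\zeta$ then lies at distance $\geq\e\Delta$ from each $t$ in this enlarged interval, and it suffices to prove the result there. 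For $X\geq\exp(1/\e\Delta)$ this makes $|\i(t-\g)\log X|\geq1$ for all zeros, so the decay bound \eqref{F_2 formula 2} applies on the imaginary axis (where $\Re z=0$, giving $F_2(z)\ll|z|^{-2}$); combined with the tail estimate \eqref{fund sum 2} of Lemma~\ref{Zero Sum} this yields
\[
\sum_{\g}F_2\bigl(\i(t-\g)\log X\bigr)\ll\frac{1}{\e\Delta\log^2 X}\Bigl(\log\t+\frac{\Phi(\t)}{\e\Delta}\Bigr),
\]
uniformly on the interval, which tends to $0$ as $X\to\infty$.

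Since the interval contains no ordinate of a zero of $\zeta$, the integer $N(t)$ is constant there. Feeding the last bound into \eqref{congruence 2} shows that, given any $\delta>0$, for all $X$ large enough $\bigl\|F_X^{*}(t)/2\pi-N(t)\bigr\|<\delta$ uniformly, i.e. $F_X^{*}(t)/2\pi$ stays within $\delta$ of a fixed integer. Taking $\delta<\tfrac12$ precludes $F_X^{*}(t)\equiv\pi\pmod{2\pi}$, so $\zeta_X^{*}(\tfrac12+\i t)$ has no zero in $\mathcal{I}$, which is the first claim. For the second, the same constraint gives $|1+e^{-\i F_X^{*}(t)}| = 2|\cos(F_X^{*}(t)/2)| = 2+O(\delta)$, so the factorization yields $|\zeta_X^{*}(\tfrac12+\i t)| = |P_X^{*}(\tfrac12+\i t)|\,(2+O(\delta))$. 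For the comparison I would use Theorem~\ref{thm on zeta} in the form $\zeta(s)=P_X^{*}(s)\exp\bigl(\sum_{\g}F_2(\i(t-\g)\log X)\bigr)\bigl(1+o(1)\bigr)$ (together with \eqref{F_2 of pole}, which links $P_X$ and $P_X^{*}$); the same tail bound makes the exponential $1+o(1)$ on the interval, so $|\zeta(\tfrac12+\i t)| = |P_X^{*}(\tfrac12+\i t)|\,(1+o(1))$ there. Dividing—legitimate since $\zeta(\tfrac12+\i t)\neq0$ on $(\g,\g')$—gives $|\zeta_X^{*}(\tfrac12+\i t)|/|\zeta(\tfrac12+\i t)| = 2+O(\delta)$ uniformly on $\mathcal{I}$, and letting $\delta\to0$, equivalently $X\to\infty$, completes the proof.

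The main obstacle—indeed the only substantive point—is the uniform smallness of the zero-sum $\sum_{\g}F_2(\i(t-\g)\log X)$ on $\mathcal{I}$: one must simultaneously keep $t$ away from the two nearest zeros (handled by passing to $[\g+\e\Delta,\g'-\e\Delta]$ and imposing $X\geq\exp(1/\e\Delta)$) and control the pooled contribution of all the remaining zeros, which is precisely where the Riemann Hypothesis and the admissibility of $\Phi$ enter, through Lemma~\ref{Zero Sum}. Everything else is routine: the elementary identity $|1+e^{\i\theta}| = 2|\cos(\theta/2)|$, and the bookkeeping identifying $P_X^{*}$ with $P_X$ and $\zeta_X^{*}$ with $\zeta_X$ up to negligible factors, already carried out above via \eqref{F_2 of pole}.
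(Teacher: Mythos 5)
Your argument is correct and follows the paper's own proof essentially line by line: the same factorization $\zeta_X^{*}(\tfrac12+\i t)=P_X^{*}(\tfrac12+\i t)\bigl(1+e^{-\i F_X^{*}(t)}\bigr)$, the same identity \eqref{congruence 2} relating $F_X^{*}(t)/2\pi$ to $N(t)$ plus the zero-sum, the same use of \eqref{F_2 formula 2} and Lemma~\ref{Zero Sum} to show that the zero-sum is $o(1)$ on $[\g+\e\Delta,\g'-\e\Delta]$ once $X\geq\exp(1/\e\Delta)$, and the same comparison of $|\zeta_X^{*}|$ and $|\zeta|$ through $|P_X^{*}|$. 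The only cosmetic differences are that you spell out the reduction of an arbitrary closed $\mathcal{I}\subset(\g,\g')$ to the interval $[\g+\e\Delta,\g'-\e\Delta]$ and make the estimate $|1+e^{-\i F_X^{*}(t)}|=2|\cos(F_X^{*}(t)/2)|=2+O(\delta)$ explicit, both of which the paper leaves implicit.
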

  
I hope to give a more complete analysis of the approximations above in a subsequent
article. 
 
E. Bombieri has pointed out to me 
that  \eqref{S(t) 2}  is closely related to an explicit 
formula  of  Guinand (\cite{Gu 1}, \cite{Gu 2}), namely,
\begin{align*} 
\pi  S(t) &=\\
 &- \lim_{X \to \infty}  \left( \sum_{n \leq X }\frac{\Lambda (n)\sin(t \log n)}
 {n^{\frac12}\log n}  - \int_{1}^{X}   \frac{ \sin(t\log u)}{u^{\frac12} \log u} du  
  - \frac{\sin(t\log X)}{\log X} \left( \sum_{n\leq X } \frac{\Lambda(n) }{n^{\frac12} } 
  -2 X^{\frac12}  \right)\right)  \\
 & -\frac78 \pi  + \frac{1}{2} \left(\arg\Gamma(\tfrac12 +it) -t\log t + t  \right)
 + \arctan 2t -\tfrac14 \arctan(\sinh \pi t) \,.
\end{align*}  
There is no sum over zeros here because Guinand is taking a limit. Also,
the $\Lambda(n)$ are unweighted. However, this is only a minor difference.
 
It is remarkable that  the zeros of  $\zeta_X^*(s)$ 
and $\zeta_X(s)$  are  close to those of  $\zeta(s)$
(Figures~\ref{function}  and \ref{fnc}) even when $X$ is small. 
Formula \eqref{congruence 2} offers a possible explanation for this. 
Suppose that $t=\g$, the  ordinate of a zero of $\zeta(s)$
with multiplicity $m$.  
Then $N(\g)$ is an  integer, so by \eqref{congruence 2} 
\begin{equation*}
\frac{1}{2\pi}F_X^{*}(\g)  \equiv   - \frac{1}{\pi} 
\;\Im \sum_{  \g^{'} } F_2 \left(i(\g-\g^{'})\log X \right)   
+  O\left(\g^{-2} \right)   \pmod{1}  \,.
\end{equation*}
Now, a  more  precise version of \eqref{F_2 formula 1} 
is  that if $y$  is real,
$$
\Im \,F_2(iy) = \arg iy + \sum_{k=0}^{\infty} a_k\, y^{2k+1},
$$
where the $a_k$ are real and the argument is $\pi/2$ when  $y= 0$
(the limit as $y \to 0^{+})$.
This is an odd function (for $y\neq 0$). Furthermore, for larger $y$
we have $ \Im\, F_2(iy) = \frac{\sin y}{y^2}(1+O(1/|y|))$ by \eqref{E_2 formula 2}.
Thus, the $m$ terms in the sum with $\g^{'}=\g$ contribute  $m\,\pi/2$, and the terms with  
$|\g-\g^{'}| \log X$ large are decreasing and oscillating. It might also be the case
that small and  intermediate range terms  cancel  out to a large degree 
because  $\Im\, F_2(iy)$ is odd and we expect the $\g^{'}$s to be somewhat random. 
If this is so, then $(1/2\pi) F_X^{*}(\g) $ will be close to $  m/2 \pmod{1}$. Thus, if $m$
is odd (it is believed that $m$ always equals $1$) it would not be surprising to 
find a zero of $\zeta_X^{*}(\frac12+it)$ nearby.



While writing this paper, I learned from a lecture by J. P. Keating
that he and E. B. Bogomolny  had  worked with a function similar to  $\zeta_ {t/2\pi}$   
restricted to the critical line as a heuristic tool to   calculate the  
  pair correlation function of the zeros of  $\zeta(s)$ (see,  for example, 
Bogomolny and Keating~\cite{BK} and Bogomolny~\cite{B}). In fact
Professor Keating~\cite{K}   had first considered such a function in the early 90s and 
observed that its zeros are quite close to those  of the zeta-function. He and his 
graduate student, Steve Banham, also heuristically investigated how close the  
zeros of $\zeta_ {X}(\frac12+it)$ and $\zeta(s)$ are as a function  of $X$.   


\section{Why  are the zeros of $\zeta_X(s)$ simple  and why do they repel?}

The construction,  properties, and graphs of the functions $\zeta_X(s)$
 suggest that they model the behavior of  
$\zeta(s)$, particularly with regard to the position of zeros.  
Therefore, explanations of why  the zeros of $\zeta_X(s)$
are  simple and  repel each other could shed light on why 
the zeros of $\zeta(s)$ have these same properties.

 Theorem~\ref{simple zeros} shows that if the Riemann Hypothesis holds, then 
the zeros of $\zeta_X(\tfrac12+it)$ with $t\geq C_0$ are simple provided that 
$X \leq \exp(C_3 \log t/\Phi(t))$ for some constant $C_3>0$. Futhermore 
 Theorem~\ref{simple zeros 2} shows  unconditionally that even for $X$ as large 
 as  $\exp(o(\log^{1-\e} t))$, $100\%$ of the zeros are simple.  
The structure of $\zeta_X(\tfrac12+it)$ suggests why.

The zeros of  $\zeta_X(\tfrac12+it)$ 
for $t \geq C_0$ are  the solutions of the congruence 
$F_X(t) \equiv \pi \pmod{2\pi}$. In other words, they are the 
$t$-coordinates of the points where the curve $y =  F_X(t)$ crosses the equally 
spaced horizontal lines $y=(2k+1) \pi$. If such a $t$ is to be the ordinate of  
a multiple zero of $\zeta_X(\tfrac12+it)$, it   also
has to be a solution of the equation $F_X^{'}(t) =0$. We saw that this cannot happen
for $X \leq \exp(C_3 \log t/\Phi(t))$ and that it cannot happen often if $\log X =o(\log \t)$.
But clearly, even for  $X$ a power of $t$ this should   happen rarely, if ever.

What about repulsion?
By  \eqref{F'} and   Theorem~\ref{Bound on real part sum},  
$F_{X}^{'}(t) \ll \Phi(t)\log X+ \log t $ when $2 \leq X \leq t^{2}$.
As in Section 7,   we divide the zeros into two kinds. The first  kind come  about by $y=F_{X}(t)$  
increasing or decreasing from $y= (2k+1)\pi$ to the next 
larger or smaller odd multiple of $\pi$  without first  re-crossing $y=(2k+1)\pi$. 
All other zeros   are  zeros of the second kind. 
Suppose that  $\g_X$ and$\g'_X$ are ordinates of  consecutive zeros of $\zeta_X(\tfrac12+it)$,
and  $\frac12+i\g_X$ is a zero of the first kind. Then
$F_{X}(\g^{'}_X) - F_{X}(\g_X)=\pm2\pi$ and we have
\begin{align*}
2\pi =& |F_{X}(\g^{'}_X) - F_{X}(\g_X)| = \left|\int_{\g_X}^{\g'_X} F_{X}^{'}(u)\,du \right| \\
\ll & (\g_X^{'} - \g_X )\left(  \log \g_X  +  \Phi(\g_X)\log X \right) \,.
\end{align*}
Thus,
\begin{equation*}
 \g_X^{'} - \g_X  \gg \frac{1}{ \log \g_X  +  \Phi(\g_X)\log X  }      \,. 
\end{equation*}
Recall that  
$(\log \g_X /\log\log \g_X )^{1/2} \ll \Phi(\g_X) \ll \log \g_X$.
Thus, if $X \leq \g_X^2$, then
\begin{equation}\label{zeros gap 1}
 \g_X^{'} - \g_X   \gg 1/\log^{a} \g_X
\end{equation}

for some $a \in [ 1,\, \frac32]$.

Note that if $X \leq \exp(C_3\log t/\Phi(t))$ with $C_3$ as in 
Theorem~\ref{simple zeros}, then $F_{X}^{'}(t)>0$ and all zeros are of the first
kind. Furthermore, by the proof of Theorem ~\ref{no. of zeros}, $\sim (t/2\pi) \log(t/2\pi)$ 
of the zeros are of the first kind when $\log X=o(\log \t)$.

If $\frac12+i\g_X$ is a zero of the  second kind, then  $F_{X}(\g^{'}_X) - F_{X}(\g_X)=0$  
and the  argument above does not work. It may be, however,  
that this does not happen often, that is, that most zeros are of the first kind. 

\begin{figure}\label{sawtooth}
\centering 
\includegraphics[width=6.5in]{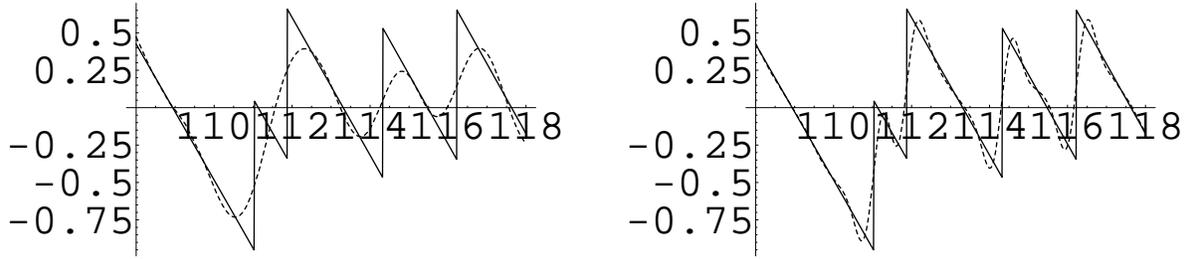}
\caption{Graphs of $S(t)$ (solid) and $(-1/\pi)f_X(t)$ (dotted) near $t=114$ for $X=10$ 
and $X=300$, respectively.}\label{sawtooth}
\end{figure}
 
To see why first observe that $S(t)$ is a saw-tooth function because $N(t)$ is a  step-function
consisting of  the increasing function  $(t/2\pi)\log (t/2\pi) -(t/2\pi) +7/8 +O(1/\t)$ 
plus  $S(t)$. Now between  consecutive ordinates $\g, \g^{'}$
of zeros of $\zeta(\frac12+it)$, $S(t)$ decreases essentially  linearly
with slope $-(1/2\pi)\log (\g/2\pi)$; it then  jumps  at $\g^{'}$  by 
an amount equal to the multiplicity of the zero $\frac12+i\g^{'}$.
The heuristic argument at the end of the last section suggesting 
that $\sum_{\g}F_2(i(t-\g)\log X)$ is usually small away from ordinates of 
zeta zeros,  when applied to  \eqref{S(t) 2} with $X \leq t^2$, implies that between ordinates
$$
 S(t)  \approx  -\frac{1}{\pi} f_X(t) = 
 -\frac{1}{\pi} \left( \sum_{n\leq X^2} 
\frac{\Lambda_X(n) \sin(t\log n)}{\sqrt{n} \log n} \right).
$$ 
Of course, it is not clear how large $X$ should be relative to $t$.
However, graphs of  $f_X(t)$ indicate that they are close to the graph of 
$S(t)$ when $X$ is moderately large,  there are small oscillations 
along the downward
slopes of $S(t)$, and then a  flatter, not necessarily vertical, rise
near the jumps of $S(t)$ (Figure~\ref{sawtooth}). 
For  
$$
\frac{F_{X}(t)}{2\pi} \approx 
\frac{t}{2\pi}\log\frac{t}{2\pi} -\frac{t}{2\pi} -\frac18  
-\frac{1}{\pi} f_X(t)  \,,
$$
which approximates $N(t)-1$, this means that
the oscillations tend to be along the flat part 
of the ``steps'' and not at the rise (Figure~\ref{stairs}). 
\begin{figure} 
\centering 
\includegraphics[width=6.5in]{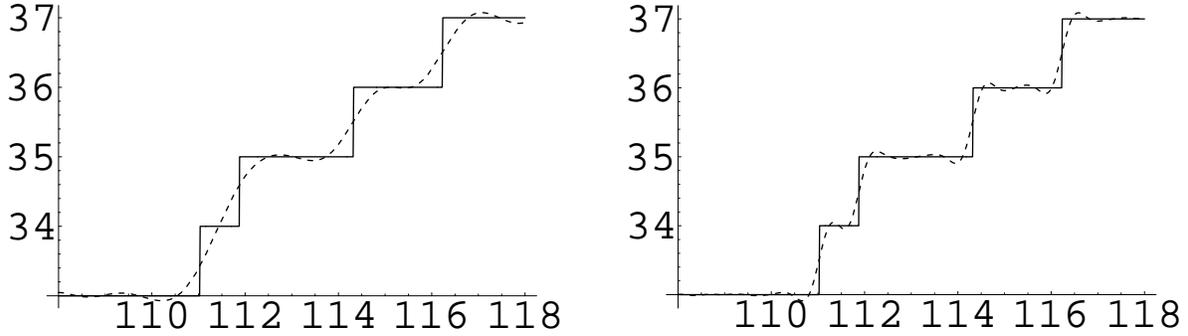}
\caption{Graphs of $N(t)$ and $(1/2\pi)F_X(t)+1$ near $t=114$ for $X=10$ 
and $X=300$}\label{stairs}
\end{figure}
However, 
zeros of   $\zeta_X(\frac12+it)$ correspond to solutions of  
$ F_{X}(t)/2\pi \equiv \frac12 \pmod{1}$, and these will be 
abscissae of points that are about half-way up the rise of $ F_{X}(t)$.
This would suggest that  zeros of the second kind are unlikely.

Our arguments have assumed that $X \leq t^2$, but we do not know
whether this is  appropriate for imitating the zeta-function in this context. 
If not, we could repeat the arguments with  $ F_{X}^{*}(t)$. This would introduce the term 
$- \Im \,F_2\left((-\tfrac12 + i t ) \log X \right)$, which can be as large as
$X/t^2 \log^2X$. Applied to the argument for gaps between zeros 
of the first kind with ordinates  around $t$, and assuming $X$ 
is   a power of $t$   greater than $2$, this   leads to  
\begin{equation*}\label{zeros gap 2}
 \g_X^{'} - \g_X   \gg 1/ \g_X ^{b}
\end{equation*}
for some positive $b$ in place of  \eqref{zeros gap 1}. The repulsion between 
zeros of the zeta-function obtained by   extrapolating from Montgomery's
pair correlation conjecture predicts that 
\begin{equation*}
 \g_X^{'} - \g_X   \gg 1/ \g_X ^{\frac13+\e}.
\end{equation*}
 

\section{Other L-functions and  sums of L-functions}

The ideas above have obvious extensions to the more general setting of the 
Selberg Class  of L-functions~\cite{S} and similar classes of functions with 
Euler products and functional equations, such as that defined by Iwaniec and 
Kowalski~\cite{IK}.  Here we briefly indicate how this 
looks for  Dirichlet L-functions.
We then  consider the analogue of the problem of the distribution of zeros of 
linear combinations of L-functions. This approach provides a new heuristic 
explanation for why such combinations should have almost all their zeros on the 
critical line.

Let $L(s, \chi)$ denote the Dirichlet L-function 
with character $\chi$ modulo $q$ and  
functional equation
\begin{equation}\label{L fnc equ}
 L(s, \chi) =  
\frac{\t(\chi)}{i^{\mathfrak{a}} \sqrt{q}  } \Psi(s) L(1-s, \overline{\chi}) \,,
\end{equation}
where
\begin{equation}\label{Psi}
\Psi(s) =  
 \left( \frac{\pi}{q}\right)^{s- \frac12}
 \frac{\Gamma\left(\frac{1+\mathfrak{a}-s}{2}\right)}{\Gamma\left(\frac{ \mathfrak{a}+s}{2}\right)}\,,
\end{equation}
 $\t(\chi)$ is Gauss' sum,
and  $\mathfrak{a} =0$ or $1$ according to whether $\chi(-1) =1$ or $-1$.
When $q=1$, this is the functional equation for $\zeta(s)$ because 
$\t(\chi)=1$ and $\mathfrak{a}=0$. 
Since the factor preceding $\Psi(s)$ has modulus $1$, 
we may  rewrite \eqref{L fnc equ} as 
\begin{equation*}
e^{i \a}  L(s, \chi) =  
  \Psi(s) e^{-i \a} L(1-s, \overline{\chi}) \,,
\end{equation*}
where $  \a =\a(\chi) \in \mathbb{R}.$

We now define the functions
$$
L_X(s, \chi) = P_X(s, \chi) + e^{- 2 i \a} \Psi(s) P_X(\overline{s}, \overline{\chi} ) \,,
$$
where 
$$
P_X(s, \chi) = \exp \left( \sum_{n \leq X^2}
\frac{\Lambda_X(n) \, \chi(n)}{n^s \log n}  \right)  \,.
$$
Observe that $P_X(\overline{s}, \overline{\chi} )=  \overline{P_X(s,  \chi)}$.
Clearly   theorems corresponding to those we have proved for $\zeta_X(s)$  hold  for $L_X(s, \chi)$. In particular, one can show that all zeros in 
$\frac12 <\s \leq 1$ have
 imaginary part $\leq C_0$. Further, if the Riemann Hypothesis holds for 
 $L(s, \chi)$, then  $N_X(t, \chi)$, the number of zeros
 of  $L_X(\frac12+iu, \chi)$ with $0\leq u \leq t$, satisfies
\begin{equation*}
N_X(t, \chi) \geq \frac{t}{2\pi} \log  \frac{qt}{2\pi} - \frac{t}{2\pi} +O(\log  \t)
\end{equation*}
when $2\leq X \leq t^2$, and equality holds if $X$ is much smaller. (We assume $q$
is fixed.)  Also,  unconditionally we have 
 $N_X(t, \chi) =(1+o(1))  (t/2\pi) \log (qt/2\pi) $ provided 
 $\log X =o(\log \t).$

A number of authors (\cite{BH 1}, \cite{BH 2}, \cite{He}, \cite{S}) have studied  the  location of  
zeros of  linear combinations of the type
$$
\mathcal{L}(s) =  \sum_{j=1}^{J} b_j e^{i \a_j} L(s, \chi_{j}),  
$$
for Dirichlet and other  L-functions with the ``same'' functional 
equation, that is, having the same factor $\Psi(s)$.  Here the $b_j$'s
are real and non zero and the inclusion of the factors $e^{i \a_j}$
ensures that  $\mathcal{L}(s)$ satisfies
$$
\mathcal{L}(s) =\Psi(s)  \overline{\mathcal{L}(1-\overline{s}) }\,.
$$
Typically, $\mathcal{L}(s)$
has infinitely many zeros off the critical line  
but no  Euler product. 
 Bombieri and Hejhal (\cite {BH 1},\cite {BH 2}) have shown, however,  that 
if the Riemann Hypothesis holds for each of the L-functions and 
their zeros satisfy a plausible spacing hypothesis, then  
$100\%$ of their zeros (in the sense of density) are on the line.  
In the case of Dirichlet and certain other L-functions, Selberg (unpublished) has shown unconditionally 
that such combinations have a  positive proportion of their zeros on the critical line.

The idea leading to these results  was first suggested by 
H. Montgomery and is  roughly  as follows. 
Consider the case of two distinct Dirichlet  
L-functions $L(s, \chi_{1})$ and 
$L(s, \chi_{2})$ to the same modulus $q$ and having the same functional equation. 
One can show  that $f_1(t)= \log |L(s, \chi_{1})|/\sqrt{\pi \log\log t}$ and 
$f_2(t)=\log |L(s, \chi_{2})|/\sqrt{\pi \log\log t}$ behave like independent normally distributed random variables with mean $0$ and standard deviation $1$. Thus,  asymptotically half the time on $[T, 2T]$  we should expect  the first function to be much larger than the second, and the other half of  the time much smaller. It can also be shown that  in any  interval $\mathcal{I}$ of length $\exp(\sqrt{\log\log T})/\log T$, one function dominates the other except possibly on a subset of measure $o(\mathcal{I})$.  
Suppose then that  $f_1(t)$ dominates in  $\mathcal{I}$.
Then $|b_1 e^{i \a_1} L(s, \chi_{1}) + b_2 e^{i \a_2} L(s, \chi_{2})|$ is essentially the size of $|b_1 L(s, \chi_{1})|$ and, if the zeros of $L(s, \chi_{1})$ are well-spaced, the zeros of $b_1  e^{i \a_1} L(s, \chi_{1}) + 
b_2  e^{i \a_2} L(s, \chi_{2})$ will be perturbations of the  zeros of  
$L(s, \chi_{1})$. Thus, if all or almost all of the zeros of each L-function  is on the  critical line, almost all the zeros of the sum should be also.
 

We now ask what happens if we replace  each $L$-function  in the linear combination  by the corresponding function $L_X(s, \chi_{j})$.
Set
\begin{align*}
 \mathcal{L}_X(s) =  \sum_{j=1}^{J} b_j e^{i \a_j} L_X(s, \chi_{j})\,,  
\end{align*}
where the $b_j$ are in $\mathbb{R}-\{0\}$,
and let $\mathcal{N}_X(t)$ denote the number of zeros of
$ \mathcal{L}_X(s)$ on $\s=\frac12$ up to height $t$. Using the 
definition of $\mathcal{L}_X(s)$
we  write this as 
\begin{align*}
 \mathcal{L}_X(s) = & \sum_{j=1}^{J} b_j e^{i \a_j} P_X(s, \chi_{j})
  +  \Psi(s)    \overline{ \sum_{j=1}^{J} b_j e^{i \a_j} P_X(s, \chi_{j}) }
  \\
  = & \mathcal{P}_X(s)  +  \Psi(s) \overline{\mathcal{P}_X(s)}  \,.
\end{align*}
Clearly
 $\mathcal{L}_X(s)$ has zeros on $\s=\frac12$ if  either  
\begin{enumerate}
\item 
 $
\quad \mathcal{P}_X(\tfrac12 +it) =0 \,,
 $
\\
 or
 \item 
 $
 \quad \mathcal{F}_X(t) =\arg \Psi(\tfrac12+it) -2 \;\arg \mathcal{P}_X(\tfrac12+it)  
  \equiv \;\pi \pmod{2\pi}\;.
$ 
\end{enumerate}
For the moment let us pass over the first case and 
count the number of points at which the second case happens  
but the first does not.
By
\eqref{Psi}    
\begin{equation*}
\arg\Psi(s) = -t\log \frac{tq}{2\pi}+t  - c_0  +O(\frac{1}{\t}) \,,
\end{equation*}
with $c_0$ a real number.
Thus,  $(2)$ happens at least  
\begin{equation*}\label{zero lower bd 2}
  \frac{t}{2\pi}\log \frac{tq}{2\pi} - \frac{t}{2\pi} 
  -2 \;\arg \mathcal{P}_X(\tfrac12+it)   +O(1) 
\end{equation*}
times on $[0, t]$. Here we define $\arg \mathcal{P}_X(\tfrac12+it)$  
by continuous variation from some point $\s_0>1$ on the real axis
up to $\s_0+it$ and then over to $\frac12+it$, with our usual convention
if $\mathcal{P}_X$ vanishes at $\frac12+it$. To bound $\arg \mathcal{P}_X(\tfrac12+it)$,
the point $\s_0$ requires some consideration. For each $j$ write
$$
P_X(s, \chi_{j}) =
\exp \left( \sum_{n \leq X^2}
\frac{\Lambda_X(n) \, \chi_{j}(n)}{n^s \log n}  \right)
=\sum_{n=1}^{\infty}  \frac{a(n) \chi_{j}(n)  }{n^s}\,.
$$
Since $0 \leq \Lambda_X(n) \leq \Lambda(n)$, we see that
for $\s>1$
\begin{align*}
\sum_{n=1}^{\infty}  \frac{a(n)  }{n^{\s}} =&
\exp \left( \sum_{n \leq X^2}
\frac{\Lambda_X(n) \, }{n^{\s} \log n}  \right) \\
\leq & \exp \left( \sum_{n = 2}^{\infty}
\frac{\Lambda(n) \, }{n^{\s} \log n}  \right)
=\zeta(\s)\,.
\end{align*}
In particular,  $0 \leq a(n) \leq 1$ and $a(1)=1$. 

Next write
$$
\mathcal{P}_X(s)  =  \sum_{j=1}^{J} b_j e^{i \a_j} P_X(s, \chi_{j})
= \sum_{n=1}^{\infty}  \frac{a(n)}{n^s}
\left(\sum_{j=1}^{J} b_j e^{i \a_j}  \chi_{j}(n)  \right)
= \sum_{n=1}^{\infty}  \frac{a(n)B(n)}{n^s} \,,
$$
say, and assume from now on that $B(1)\neq 0$. (If $B(1)=0$, 
the following argument  would have to be modified slightly, and the  
number of zeros would change by $O(t)$.)
Setting
$B=\sum_{j=1}^{J} |b_j| $, we have  $|B(n)| \leq B$ for every $n$,
and there exists a positive constant $c_1$ and a real number $\omega$
such that $ B(1) = c_1 e^{i\omega}B$. It follows that
for $\s>1$ 
$$
\Re \,(e^{-i\omega}\mathcal{P}_X(s) ) \geq   c_1B - B\sum_{n=2}^{\infty}  \frac{1}{n^{\s}}
\geq B\left(c_1 - \int_{1}^{\infty} x^{-\s}   \right)
 = B \left(c_1 - \frac{1}{ \s-1}   \right)\,.
$$
This is positive if $\s > 1+1/c_1$. Thus, if $\s_{0}$ meets this condition,
$\Re \,(e^{-i\omega}\mathcal{P}_X(\s_0 +it) ) > 0$ for all $t$, and 
$\arg \mathcal{P}_X(\s_0 +it)$ varies by at most $\pi$ on   
$[\s_0, \s_0+it]$. 
It follows that $|\arg \mathcal{P}_X(\frac12+it)|$ is   less than or equal to 
the change in argument of $\mathcal{P}_X(s)$ on the segment $[\frac12+it, \s_0+it]$
plus $\pi$. By a well known lemma in Section 9.4 of Titchmarsh~\cite{T}, 
if $|\mathcal{P}_X(\s^{'}+it^{'})| \leq M(\s, t)$ for $\frac12 \leq \s \leq \s^{'}, 1\leq t^{'}\leq t$, then
this change in argument is 
$\ll_{\epsilon} \log (M(\frac12-\e, \t)/|\Re\,e^{-i\omega}\mathcal{P}_X(\s_0) |)+1$
for any $\epsilon>0$. Now 
$$
|\mathcal{P}_X(s)|  \leq  B\sum_{j=1}^{J}| P_X(s, \chi_{j})|
$$
and 
$$
 P_X(s, \chi_{j}) \ll  \exp \left( \sum_{n \leq X^2}
\frac{\Lambda_X(n) }{n^{\s} \log n}  \right)
\ll \exp \left( \frac{X^{2(1-\s)}}{\log X}   \right) \,.
$$
Thus, 
$$
\arg \mathcal{P}_X(\frac12+it) \ll_{\epsilon} \frac{X^{1+2\e}}{\log X} \,.
$$
This is a very crude bound but it suffices here.
By \eqref{zero lower bd 2}, we now have
$$
\mathcal{N}_X(t) \geq    \frac{t}{2\pi}\log \frac{tq}{2\pi} - \frac{t}{2\pi} 
  +O_{\epsilon}(X^{1+2\e}) \,,
$$
and the leading term is larger than the $O$-term if $X < t^{1-2\epsilon}.$
To leading order this is also the lower bound for the number of zeros of  
each $L_X(s, \chi)$. 
With more work we could  show  unconditionally that  when
$\log X/\log \t =o(1)$, the number of zeros arising from case $(2)$
is in fact   $=(1+o(1))(t/2\pi)\log(t/2\pi)$.

An analysis of the contribution of zeros from case $(1)$ is rather elaborate and we will
not attempt it here. One expects relatively few zeros to arise in this way, though,
because it is unlikely that the curve $z=\mathcal{P}_X(\frac12+it)$ will pass through the origin.
As with our previous results, the difficulty we have is not to prove that
there are lots of zeros on the line, but that there are not too many, 
and, just as before, we have only limited success with this.

The main point I wished  to illustrate here is that one can see immediately from the 
structure of $L_X(s, \chi)$ why  one might expect $100\%$ of the zeros of  linear 
combinations  of such functions to lie on the critical line.    
It therefore  suggests a reason this should  be true for 
linear combinations of actual L-functions, and  this reason is different from the  usual one.

 
 
 
 
 
\section{Appendix}

\begin{thm*}\label{thm on Lindelof 1} 
A  necessary and  sufficient condition for the truth of the Lindel\"{o}f 
Hypothesis is that for $\frac12 \leq \s \leq 2 $,
 $|s-1|> \frac{1}{10}$,  and $2\leq X \leq \t^2$,
\begin{equation}\label{Lindelof approx 4}
 \zeta(s) = \sum_{n \leq X} \frac{1}{n^s}   
+  O\left(  X^{ \frac12 - \sigma}\t^{\epsilon}   \right) \,.
\end{equation} 
Moreover, if the Riemann Hypothesis is true, then 
there exists a positive constant $C_1$ such that for $X$ and $s$
as above, 
\begin{equation}\label{Riemann approx 10}
 \zeta(s) = \sum_{n \leq X} \frac{1}{n^s}   
+  O\left(  X^{\frac12 - \sigma } e^{C_1\Phi(t)} \right)   \,.
 \end{equation}
 Here $\Phi(t)$ is an admissible function in the sense of Section~\ref{product approx}.
 In particular, we have 
  \begin{equation}\label{zeta bd}
\zeta(s) \ll e^{C_1\Phi(t)} 
\end{equation}
for $\frac12 \leq \s \leq 2 $ and
 $|s-1|> \frac{1}{10}$ 
\end{thm*}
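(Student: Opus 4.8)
The plan is to adapt the proof of Theorem~13.3 of Titchmarsh~\cite{T}. The \emph{sufficiency} of the condition is immediate: putting $\s=\tfrac12$ and $X=2$ in \eqref{Lindelof approx 4} (whose hypotheses then hold trivially) gives
\[
\zeta(\tfrac12+it)=1+2^{-\frac12-it}+O(\t^{\e})=O(\t^{\e}),
\]
which is the Lindel\"of Hypothesis; likewise \eqref{zeta bd} follows from \eqref{Riemann approx 10} with $X=2$, since $e^{C_1\Phi(t)}\gg1$. So the real content is the \emph{necessity}.

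Assume the Lindel\"of Hypothesis, so $\zeta(\tfrac12+it)\ll\t^{\e}$, and take $\t$ large (bounded $t$ is trivial). Applying \eqref{zeta approx 2} with $Y=\t^{2}$ in place of $X$ gives $\zeta(s)=\sum_{n\leq Y}n^{-s}+O(X^{\frac12-\s})$, since $|Y^{1-s}/(s-1)|\ll Y^{1-\s}/\t=\t^{1-2\s}=Y^{\frac12-\s}\leq X^{\frac12-\s}$ (using $\tfrac12-\s\leq0$ and $X\leq Y$) and $O(Y^{-\s})$ is smaller. So it suffices to show
\[
\sum_{X<n\leq Y}n^{-s}=O(X^{\frac12-\s}\t^{\e}).
\]
I would obtain this by the truncated Perron formula: write the left-hand side as $\tfrac{1}{2\pi i}\int_{c-iU}^{c+iU}\zeta(s+z)\,(Y^{z}-X^{z})z^{-1}\,dz$ plus standard error terms, with $c$ a bounded constant making $\s+c$ exceed $1$ by a fixed amount and $U$ a large fixed power of $\t$, and then move the contour to $\Re z=\tfrac12-\s$. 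Since $(Y^{z}-X^{z})/z$ is entire, the only pole crossed is $z=1-s$, and its residue $(Y^{1-s}-X^{1-s})/(1-s)$ is $\ll X^{\frac12-\s}$ by the same computation as above --- this is exactly where the hypotheses $X\leq\t^{2}$ and $|s-1|>\tfrac1{10}$ enter. On the new contour $\Re(s+z)=\tfrac12$, so $\zeta(s+z)\ll\t^{\e}$ \emph{directly} from the Lindel\"of bound (all relevant $|\Im z|\leq U$, a power of $\t$), while $|(Y^{z}-X^{z})/z|\ll X^{\frac12-\s}\min(\log\t,|z|^{-1})$; hence this part of the integral is $\ll X^{\frac12-\s}\t^{\e}$.

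The remaining contributions --- the two horizontal segments at $\Im z=\pm U$ and the Perron error terms --- are where the only real labor lies, but it is routine: choosing $U$ a large enough power of $\t$ makes each of them $\ll\t^{-1}\ll X^{\frac12-\s}\t^{\e}$, using merely crude polynomial bounds for $\zeta$ in vertical strips, the standard Perron error estimate, and a harmless perturbation of $Y$ to a half-integer. Assembling the pieces yields $\sum_{X<n\leq Y}n^{-s}=O(X^{\frac12-\s}\t^{\e})$, and hence \eqref{Lindelof approx 4}. The proof of \eqref{Riemann approx 10} is word-for-word the same, except that on the contour $\Re(s+z)=\tfrac12$ one uses the bound $\zeta(\tfrac12+iv)\ll e^{\Phi(|v|)}$ furnished by admissibility of $\Phi$ in place of $\t^{\e}$; here \eqref{Phi ineq} lets one bound $\Phi(|t+\Im z|)$ by $O(\Phi(\t))$ throughout $|\Im z|\leq U$, and the $\log\t$ factors together with the constant in this estimate are absorbed into a suitable $C_1$. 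The genuine obstacle is therefore only the uniformity of this bookkeeping across $\s\in[\tfrac12,2]$ and all $t$.
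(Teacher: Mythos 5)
Your argument is sound in outline and lands on the same engine the paper uses --- a truncated Perron integral plus a contour shift to the line $\Re(s+z)=\tfrac12$, on which the Lindel\"of (or RH) bound for $\zeta$ is applied pointwise. The difference in setup is genuine but modest: the paper applies Perron directly to $\sum_{n\le X}n^{-s}$ with the carefully tuned abscissa $c=\tfrac12+1/\log X$ (so that $X^c\ll X^{1/2}$), picking up the pole of $X^w/w$ at $w=0$ to produce $\zeta(s)$ and the pole at $w=1-s$ to produce the $X^{1-s}/(s-1)$ term; you first use the crude approximate functional equation to replace $\zeta(s)$ by $\sum_{n\le Y}n^{-s}$ with $Y=\t^2$ and then Perron the tail $\sum_{X<n\le Y}n^{-s}$ with kernel $(Y^z-X^z)/z$, which is entire at $z=0$ and so only crosses $z=1-s$. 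Your route is a bit more redundant (the AFE step duplicates work the contour shift would do anyway) but has the minor advantage of keeping the abscissa $c$ bounded away from $1-\s$, at the cost of a larger $Y^c$; the paper's choice $c=\tfrac12+1/\log X$ keeps $X^c\ll X^{1/2}$ and so can take $U$ of moderate size ($U=\t+X^\s$), whereas you need $U$ to be a very large power of $\t$.

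One concrete slip in your bookkeeping: the chain $\ll\t^{-1}\ll X^{\frac12-\s}\t^{\e}$ for the horizontal-segment and Perron errors is false at the extreme of the allowed range. When $X$ is near $\t^2$ and $\s$ near $2$, the target $X^{\frac12-\s}\t^{\e}$ is as small as $\t^{-3+\e}$, and $\t^{-1}$ is not $\ll\t^{-3+\e}$. The correct uniform floor is $X^{\frac12-\s}\ge(\t^2)^{-3/2}=\t^{-3}$ for $\s\in[\tfrac12,2]$ and $X\le\t^2$, so you should aim to make those error pieces $\ll\t^{-3}$ (or any fixed power below $\t^{-3}$). This is achievable within your framework --- the horizontal contributions are $\ll U^{A-1}\t^{2c}$ with some fixed $A<1$ from the convexity bound on $\zeta$ in the strip, and the Perron tail error is $\ll\t^{2c}/U$, so taking $U$ a sufficiently large fixed power of $\t$ does the job --- but the inequality as you wrote it does not close the argument. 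Otherwise your reasoning, including the treatment of the residue at $z=1-s$ and the passage from the Lindel\"of case to the RH case via $\Phi$ and \eqref{Phi ineq}, matches what is needed.
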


 \begin{proof} The proof of a statement similar to the first assertion may 
 be found in  Titchmarsh~\cite{T}  (Theorem 13.3).  Moreover, the more difficult
implication (Lindel\"{o}f implies \eqref{Lindelof approx 4}) is proved by an easy 
modification of the proof of the second assertion, which we turn to now.

We apply   Perron's formula (Lemma 3.19 of Titchmarsh~\cite{T})   to 
$  \zeta(s)$  
and  obtain
\begin{equation}\label{sum 1 H}   %
\sum_{n \leq X}\frac{1}{n^s} =
\frac{1}{2\pi i} \int_{c-i U}^{c+i U}  \zeta(s+w) \frac{X^w}{w} \d w
+O\left( \frac{X^\frac12 \log 2X}{U}   \right)  
+O\left(X^{-\sigma} \right)  \,,
\end{equation}
where $X \geq 2$ and $c = \frac12 + \frac{1}{\log X}$.  
Letting $b = \frac12-\sigma -\frac{1}{\log \t}$ and 
 $\R$ the positively oriented rectangle with vertices $b \pm i U$ and 
$c \pm i U$, we find that
\begin{equation}\label{2H}
\frac{1}{2\pi i} \int_{\R}  \zeta(s+w) \frac{X^w}{w} \d w
= \zeta(s)+ \frac{X^{1-s} }{1-s}\,.
\end{equation}
On the Riemann Hypothesis,
\begin{equation}\label{3H}
\left(  \int_{b - i U}^{c- i U}  
+ \int_{c + i U}^{b + i U} \right)
\zeta(s+w) \frac{X^w}{w} \d w \ll  X^\frac12  e^{\Phi(U+\t)} U^{-1}\,.
\end{equation}
Here we have used the functional equation and the
 estimates $ |\zeta(\frac12 +\frac{1}{\log \t} +i (t+v))|  \ll e^{\Phi(U+\t)}$ 
 and $X^c \ll X^\frac12$.
Also by the Riemann Hypothesis,
\begin{align}\label{4H}
  \int_{b - i U}^{b + i U}  
\zeta(s+w) \frac{X^w}{w} \d w 
& \ll     X^{\frac12 - \sigma} e^{\Phi(U+\t)}     \notag
 \int_{0}^{U}  \left( b^2 + (v)^2 \right)^{-\frac12}  dv   \\
& \ll  X^{\frac12 - \sigma} e^{\Phi(U+\t)}  \log (U/b) \\
& \ll  X^{\frac12 - \sigma} e^{\Phi(U+\t)}  \log U   \,.   \notag
\end{align}
Combining \eqref{sum 1 H} - \eqref{4H},  
we obtain
\begin{equation*}\label{Basic Lindelof formula}  
\zeta(s) = \sum_{n \leq X}\frac{1}{n^s}  +  \frac{X^{1-s} }{s-1} 
+ O\left( \frac{X^{\frac12}  e^{\Phi(U+\t)} }{U}  \right)  
+  O\left(   X^{\frac12 - \sigma}  e^{\Phi(U+\t)} \log \t \right) \,.
\end{equation*}
Since  $X \leq \t^{2}$   
the second term on the right is $\ll X^{\frac12 - \sigma} (X^{\frac12}/  \t)
\ll X^{\frac12 - \sigma} $.
The third is $\ll X^{\frac12}  e^{\Phi(U+\t)} 
U^{-1} \ll X^{\frac12-\s}  e^{\Phi(U+\t)} $
since $U=\t+X^{\s}> X^\s$.
Thus,  we find that
\begin{equation*}
\zeta(s) = \sum_{n \leq X} \frac{1}{n^s}   
+  O\left(  X^{\frac12 - \sigma } e^{\Phi(U+\t)} 
\log  \t  \right)   \,. 
\end{equation*}
Finally, by \eqref{Phi ineq} and the fact that $\Phi$ is increasing, we have 
$\Phi(U+\t) \leq \Phi(\t^4 +2\t) \leq \Phi(2\t^4) \leq C_1\Phi(t)$.
This establishes \eqref{Riemann approx 10}. 

The bound in \eqref{zeta bd} follows immediately on taking $X=2$ in 
\eqref{Riemann approx 10}. 

\end{proof}

Now  set
$$
S(u)=  \sum_{n \leq u} \frac{1}{n^{\frac12 + i t}} \,.
$$
Since $\zeta(\frac12+it) \ll e^{\Phi(t)}$, 
by \eqref{Riemann approx 10} we see that
\begin{equation*}
S(u) \ll    e^{C_1\Phi(t)} 
\end{equation*}
for $1 \leq u  \leq \t^{2 }$.
By Stieltjes integration,  if
$\sigma < \frac12$, 
\begin{align*}
 \sum_{n \leq X} \frac{1}{n^{\sigma  + i t}} 
&  = \int_{1^{-}}^{ X} u^{  \frac12 -  \sigma}\, d S(u)   
  =  u^{  \frac12 -  \sigma} S(u) \bigg|_{1^{-}}^{ X} -
( \tfrac12 -  \sigma) \int_{1}^{ X}                                      \notag
 u^{  -\frac12 -  \sigma  } S(u)    \, d u \\
& \ll  X^{  \tfrac12 -  \sigma} e^{C_1\Phi(t)} +		 \notag
( \tfrac12 -  \sigma )  e^{A_1\Phi(t)} \int_{1}^{ X}  u^{ - \frac12 -  \sigma  }   \, d u\\
& \ll  X^{  \frac12 -  \sigma} e^{C_1\Phi(t)} \,.           \notag
\end{align*}
We also have from  \eqref{Riemann approx 4} and
\eqref{zeta bd} that when $\sigma \geq  \frac12$
\begin{equation*}
 \sum_{n \leq X} \frac{1}{n^{\sigma  + i t}}  \ll     e^{C_1\Phi(t)} \,.
\end{equation*}
Combining our estimates, we obtain the

\begin{cor*}\label{bound for sums}  
Let   $  1 \leq X  \leq \t^{2}$,  $|\sigma| \leq 2$,  and $|s-1|> \frac{1}{10}$.
If the Riemann Hypothesis is true we have 
\begin{equation*}\label{Riemann approx 5}
  \sum_{n \leq X} \frac{1}{n^s}   \ll
   X^{\max ( \frac 12 - \sigma,\, 0 ) } \,e^{C_1\Phi(t)} \,.
\end{equation*}
Moreover, 
A necessary and 
sufficient condition for the truth of the Lindel\"{o}f Hypothesis is that   
\begin{equation*}\label{Lindelof approx 5}
  \sum_{n \leq X} \frac{1}{n^s}   \ll
   X^{\max ( \frac 12 - \sigma,\, 0 ) } \,\t^{\epsilon} \,.
\end{equation*}
\end{cor*}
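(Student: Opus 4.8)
The plan is to derive the bound on the Dirichlet sum from the already-established approximations of $\zeta(s)$ by that same sum, using partial summation to pass from the range $\sigma\ge\tfrac12$ to $\sigma<\tfrac12$.

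First I would dispose of the range $\tfrac12\le\sigma\le 2$, where $\max(\tfrac12-\sigma,0)=0$ and the assertion reduces to $\sum_{n\le X}n^{-s}\ll e^{C_1\Phi(t)}$. This is immediate from \eqref{Riemann approx 10} (equivalently Theorem~\ref{lem on RH 1} and \eqref{Riemann approx 4}): the error term there is $\ll X^{\frac12-\sigma}e^{C_1\Phi(t)}\le e^{C_1\Phi(t)}$ since $X\ge 1$, while $\zeta(s)\ll e^{C_1\Phi(t)}$ by \eqref{zeta bd}, so subtracting gives the claim uniformly for $1\le X\le\t^2$. Next, for $|\sigma|\le 2$ with $\sigma<\tfrac12$, I would fix $t$, set $S(u)=\sum_{n\le u}n^{-1/2-it}$, and note that the previous step (applied at $\sigma=\tfrac12$, with $X$ replaced by each $u$) gives $S(u)\ll e^{C_1\Phi(t)}$ \emph{uniformly} for $1\le u\le\t^2$ — this uniformity, rather than a bound at a single endpoint, is the one point that needs attention, and it is exactly what \eqref{Riemann approx 10} supplies since $X$ there ranges over all of $[2,\t^2]$. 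Writing $n^{-\sigma-it}=n^{\frac12-\sigma}\,n^{-1/2-it}$ and applying Riemann--Stieltjes summation,
\[
\sum_{n\le X}\frac{1}{n^{\sigma+it}}=\int_{1^-}^{X}u^{\frac12-\sigma}\,dS(u)=X^{\frac12-\sigma}S(X)-\Bigl(\tfrac12-\sigma\Bigr)\int_{1}^{X}u^{-\frac12-\sigma}S(u)\,du .
\]
The first term is $\ll X^{\frac12-\sigma}e^{C_1\Phi(t)}$; in the integral the bound for $S(u)$ pulls out $e^{C_1\Phi(t)}$ and $(\tfrac12-\sigma)\int_1^X u^{-1/2-\sigma}\,du=X^{\frac12-\sigma}-1\ll X^{\frac12-\sigma}$, so that term is of the same order. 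Since $X^{\frac12-\sigma}=X^{\max(\frac12-\sigma,0)}$ throughout this range, the first assertion follows.

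For the Lindel\"{o}f equivalence I would run the identical two-step argument with $e^{C_1\Phi(t)}$ replaced throughout by $\t^{\epsilon}$ and \eqref{Riemann approx 10} replaced by \eqref{Lindelof approx 4}, whose equivalence with the Lindel\"{o}f Hypothesis is the content of the first theorem of the Appendix. In particular, \eqref{Lindelof approx 4} taken with $X=2$ gives $\zeta(s)\ll\t^{\epsilon}$ for $\tfrac12\le\sigma\le 2$, which feeds the $\sigma\ge\tfrac12$ step; so the Lindel\"{o}f Hypothesis implies the stated sum bound. For the converse, the sum bound at $\sigma=\tfrac12$ with $X\asymp\t$, combined with the crude approximate functional equation \eqref{zeta approx 2} (whose polar term contributes only $O(\t^{-1/2})$ there), recovers $\zeta(\tfrac12+it)\ll\t^{\epsilon}$, i.e. the Lindel\"{o}f Hypothesis.

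I do not anticipate a genuine obstacle: the entire content sits in the approximations of $\zeta(s)$ by its own Dirichlet sum proved earlier, and what remains is routine Abel-summation bookkeeping. The only place one must be slightly careful is in confirming that the base-case estimate for $S(u)$ holds over the whole summation range $1\le u\le\t^2$, not merely at $u=X$, which is already built into the hypotheses of \eqref{Riemann approx 10} and \eqref{Lindelof approx 4}.
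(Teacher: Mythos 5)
Your proof is correct and follows the paper's own argument essentially line for line: the $\sigma\ge\tfrac12$ case comes from subtracting the error in \eqref{Riemann approx 10} (resp.\ \eqref{Lindelof approx 4}) from the bound for $\zeta(s)$, and the $\sigma<\tfrac12$ case comes from Riemann--Stieltjes partial summation against $S(u)=\sum_{n\le u}n^{-1/2-it}$, using exactly the uniformity in $u$ that you flag. Your extra sentence handling the converse direction for Lindel\"of (via \eqref{zeta approx 2} with $X\asymp\t$) is a sound way to fill in the one step the paper leaves implicit, but it is standard bookkeeping rather than a different route.
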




\begin{thebibliography}{}

 \bibitem{B} E.  Bogomolny, ``Spectral statistics'', \textit{Doc. Math. J.--Extra vol. ICM}   
 (1998),   99--108.       

    
\bibitem{BH 1} E. Bombieri and D. A. Hejhal, ``On the zeros of Epstein zeta functions'',
  \textit{C. R. Acad. Sci. Paris SŽr. I Math.} \textbf{304} (1987), no. 9, 213--217.    
 
 \bibitem{BH 2} E. Bombieri and D. A. Hejhal, ``On the distribution of
zeros of linear combinations of Euler products'', \textit{Duke Math. J.}
\textbf{80} (1995) 821--862.
    
    
    
 \bibitem{BK} E. B. Bogomolny and J. P. Keating, ``Gutzwiller's trace formula and spectral statistics: beyond the diagonal approximation'', \textit{Phys. Rev. Lett. }   
\textbf{77} (1996), no. 8, 1472--1475.                  

\bibitem{BR} R. Balasubramaian and K. Ramachandra, 
``On the frequency of Titchmarsh's phenomenon for $\zeta(s)$, III'', 
{\em Proc. Indian Acad. of Sci.} \textbf{86 A} (1977), 341--351.


\bibitem{dav} H. Davenport, \textit{Multiplicative Number Theory} (2nd edition), 
Springer, New York, 1980.
 
 \bibitem{FGH} D. Farmer, S.M. Gonek, C.P. Hughes, ``The maximum  
 size of L-functions'', \textit{ J. reine  angew. Math.}, to appear.
 
 \bibitem{GG} D. A. Goldston and S. M. Gonek, ``A note on $S(t)$ and
 the zeros of the Riemann zeta-function'',  \textit{ Proc. L.M.S.}, to appear.
 
 
\bibitem{G} S. M. Gonek, ``Mean values of the Riemann 
zeta-function and its derivatives'', {\em Invent. Math.} 
\textbf{75} (1984)  123--141.

\bibitem{GHK} S.M. Gonek, C.P. Hughes and J.P. Keating, ``A Hybrid Euler-Hadamard product   for the Riemann zeta function'', \textit{Duke Math. J.}, to appear.

\bibitem{Gu 1} A. P. Guinand, ``Some Fourier transforms in prime-number theory'', 
\textit{Quart. J.  Math. Oxford Ser. (2)} \textbf{18} (1947) 53--64.


\bibitem{Gu 2} A. P. Guinand, ``A summation formula in the theory of prime numbers'', 
\textit{Proc. London Math. Soc. (2)} \textbf{50} (1948) 107--119.


\bibitem{HardyLittle} G.H. Hardy and J.E. Littlewood, ``Contributions 
to the theory of the Riemann zeta-function and the theory of the distribution 
of primes'', \textit{Acta Arith.} \textbf{41} (1918) 119--196.

 \bibitem{He} D. A. Hejhal, ÒOn a result of Selberg concerning zeros of linear combinations of L-functions,Ó \textit{I.M.R.N.}, (2000) 551--577. 


\bibitem{HKO} C.P. Hughes, J.P. Keating and N. O'Connell, ``Random matrix theory 
and the derivative of the Riemann zeta function'', \textit{Proc. R. Soc. Lond. A} 
\textbf{456} (2000) 2611--2627.

 \bibitem{Ing} A.E. Ingham, ``Mean-values theorems in the theory of the Riemann 
zeta-function'', \textit{Proc. Lond. Math. Soc.} \textbf{27} (1926) 273--300.

\bibitem{IK} H. Iwaniec and E. Kowalski, \textit{Analytic Number Theory}, 
American Mathematical Society Colloquium Publications {\textbf 53}, Providence, RI 2004. 

\bibitem{K}  J.P. Keating, ``Quantum chaology and the Riemann zeta-function'', 
in \textit{Quantum Chaos}, eds. G. Casati, I. Guarneri, and U. Smilansky, 
 North-Holland, Amsterdam, (1993) 145--185.

\bibitem{KS} J.P. Keating and N.C. Snaith, ``Random matrix theory and $\zeta(1/2+\i t)$'', 
\textit{Commun. Math. Phys.} \textbf{214} (2000) 57--89.


\bibitem{M1} H.L. Montgomery, ``Mean and large values of Dirichlet polynomials'',
\textit{Invent. Math.} \textbf{8} (1969) 334--345.

\bibitem{M} H.L. Montgomery, ``The pair correlation of zeros of the zeta 
function'', {\textit Analytic Number Theory}, 
Proceedings of Symposia in Pure Mathematics {\textbf 24} (1973) 181--193.

\bibitem{MV} H.L. Montgomery and R.C. Vaughan, ``Hilbert's inequality'', 
\textit{J. London Math. Soc. (2)} \textbf{8} (1974) 73--82.
      
\bibitem{S} A. Selberg, ``Old and new conjectures and results about a class of
Dirichlet series'', in \textit{Collected Papers}, vol.2, Springer Verlag, 1991, 47--63.

\bibitem{S2} A. Selberg, ``On the remainder in the formula for $N(T)$, the number 
of zeros of  $\zeta(s)$ in the strip $0<t<T$'',  \textit{Avhandlinger Norske Vid. Akad.
Oslo}, (1944), no.1, 1--27.

\bibitem{T} E.C. Titchmarsh, \textit{The Theory of the Riemann Zeta-Function} 
(2nd edition, revised by D.R. Heath-Brown), Oxford Science Publications, 1986.

\end{thebibliography}
\end{document}